\def\R{{\mathbb{R}}}
\def\B{{\mathbb{B}}}
\def\argmin{\mathop{\rm arg\,min}}
\def\bd{{\partial}}
\def\spb{{{\cal SPB}(\R^d)}}
\numberwithin{equation}{section}
\crefname{hypothesis}{Hypothesis}{Hypotheses}
\newtheorem{example}{Example}[section]
\title{Subdifferentially polynomially bounded functions and Gaussian smoothing-based zeroth-order optimization
\thanks{Submitted to the editors \today.
\funding{Ting Kei Pong is supported in part by a Research Scheme of the Research Grants Council of Hong Kong SAR, China (project T22-504/21R). Shuqin Sun is supported in part by the Opening Project of Sichuan Province University Key Laboratory of Bridge Non-destruction Detecting and Engineering Computing (project 2023QYJ08). Man-Chung Yue is supported in part by the Hong Kong Research Grants Council under the GRF project 15304422.}}}
\author{Ming Lei\thanks{College of Applied Mathematics, Chengdu University of Information Technology, Chengdu, People's Republic of China
  (\email{leim@cuit.edu.cn}).}
\and   Ting Kei Pong\thanks{Department of Applied Mathematics, The Hong Kong Polytechnic University, Hong Kong, People's Republic of China
  (\email{tk.pong@polyu.edu.hk}).}
\and Shuqin Sun\thanks{Key Laboratory of Optimization Theory and Applications at China West Normal University of Sichuan Province, School of Mathematics Education, China West Normal University, Nanchong, People's Republic of China
  (\email{sunshuqinsusan@163.com}).}
\and Man-Chung Yue\thanks{Musketeers Foundation Institute of Data Science and Department of Data and Systems Engineering, The University of Hong Kong, Hong Kong, People's Republic of China
  (\email{mcyue@hku.hk}).}
}
\definecolor{revise}{rgb}{0,0,0}
\definecolor{rerevise}{rgb}{0,0,0}
\begin{document}

\maketitle

\begin{abstract}
We {\color{revise}study} the class of subdifferentially polynomially bounded (SPB) functions, which is a rich class of locally Lipschitz functions that encompasses all Lipschitz functions, all gradient- or Hessian-Lipschitz functions, and even some non-smooth locally Lipschitz functions.
We show that SPB functions are compatible with Gaussian smoothing (GS), in the sense that the GS of any SPB function is well-defined and satisfies a descent lemma akin to gradient-Lipschitz functions, with the Lipschitz constant replaced by a polynomial function.
Leveraging this descent lemma, we propose GS-based zeroth-order optimization algorithms with an adaptive stepsize strategy for {\color{revise}minimizing} SPB functions, and analyze their {\color{revise}convergence rates with respect to both relative and absolute stationarity measures. Finally, we also establish the iteration complexity for achieving a $(\delta, \epsilon)$-approximate stationary point, based on a novel quantification of Goldstein stationarity via the GS gradient that could be of independent interest.}
\end{abstract}

\begin{keywords}
Gaussian smoothing, Zeroth-order optimization, Subdifferentially polynomially bounded functions, Goldstein stationarity
\end{keywords}

\begin{MSCcodes}
49J52, 90C30, 90C56
\end{MSCcodes}

\section{Introduction}	

Zeroth-order optimization (a.k.a. derivative-free optimization) refers to optimization problems where the objective function can be accessed only through a zeroth-order oracle, a routine for evaluating the function at a prescribed point.
Zeroth-order optimization {\color{revise}often arises in situations where one aims at optimally exploring or configuring physical environments using experimental data or computer simulations}, and has attracted intense research over the last few decades.
We refer the readers to the expositions \cite{ConnSchVic09, Larson2019} and references therein for classic works and recent developments on zeroth-order optimization.

A prominent zeroth-order optimization algorithm is the Nesterov and Spokoiny's random search method~\cite{Polyak1987, NesterovGS, Larson2019, Nemirovsky83} developed based on the concept of Gaussian smoothing (GS), whose definition is recalled here for convenience.
\begin{definition}[{\hspace{1sp}\cite[section 2]{NesterovGS}}]\label{def_GSfunction}
Let $\sigma > 0$ and $f:\R^d\to \R$ be a Lebesgue measurable function. The Gaussian smoothing of $f$ is defined as
		\[
		f_\sigma(x)= \mathbb{E}_{u\sim\mathcal{N}(0,I)}[f(x + \sigma u)],
		\]
		where $\mathcal{N}(0,I)$ denotes the $d$-dimensional standard Gaussian distribution. 	
\end{definition}
As a convolution of $f$ with the Gaussian kernel, the GS $f_\sigma$ enjoys many desirable properties. For example, it was shown in \cite[section~2]{NesterovGS} to inherit convexity and Lipschitz continuity from $f$. Moreover, it was shown that $\nabla f_\sigma$ is Lipschitz continuous whenever $f$ is globally Lipschitz. This latter fact was leveraged in \cite[section~7]{NesterovGS} to establish the \emph{first worst-case complexity result} for a (stochastic) zeroth-order method for minimizing a nonsmooth nonconvex \emph{globally} Lipschitz function. The work \cite{NesterovGS} has stimulated a surge of studies on GS-based zeroth-order optimization algorithms; see, e.g, \cite{Lin2022, Maggiar2018, Bala2022, Berahas2022, Jongeneel21, Osher2022, Starnes2023a, Starnes2023b, DereventsovDGS_DFO_GS2022}. {\color{revise} It should however be pointed out that the advantage of GS-based algorithms over classical finite-difference methods is still under discussion~\cite{Scheinberg22}.}

To the best of our knowledge, most existing works on GS-based zeroth-order optimization algorithms, if not all, require the objective function itself, its gradient, or its Hessian to be Lipschitz continuous.
Such assumptions not only ensure that the GS is well-defined and its gradient can be unbiasedly approximated by random samples of $ f(x + \sigma u)u/\sigma$ or $ (f(x + \sigma u) - f(x))u/\sigma$ (with $u\sim \mathcal{N}(0,I)$), but also play a crucial role in the convergence analysis of the corresponding GS-based zeroth-order optimization algorithms. Nonetheless, these Lipschitz assumptions may not hold in many practical applications, {\color{revise}including hyperparameter tuning~\cite{Ehrhardt2020}, distributionally robust optimization~\cite{JinZhangWangWang21, taskesen2021sequential}, neural network training~\cite{ZhangHeSraJad19}, adversarial attacks~\cite{Chen2017} and $\mathcal{H}_\infty$ control~\cite{Guo2023}}. It is thus important to study less stringent Lipschitz assumptions to widen the applicability of zeroth-order optimization.

A similar issue concerning Lipschitz assumptions also arises in the context of first-order methods, where the global Lipschitzness of the gradient is instrumental to the algorithmic design and analysis.
As an attempt to relax the Lipschitz requirement in the study of first-order methods, various notions of generalized smoothness~\cite{JinZhangWangWang21, ChenZhouLiangLu23, ZhangHeSraJad19, MiKhWaDeGo24} have been recently proposed and led to the development and analysis of new first-order methods for these classes of generalized smooth functions.
While it may be {\color{rerevise}tempting} to adapt these notions to the study of zeroth-order optimization, it is unclear how this can be done even for the special case of GS-based zeroth-order optimization algorithms.

{\color{revise}Recently, a class of locally Lipschitz functions with Lipschitz modulus growing at most \emph{polynomially} was introduced in \cite[Assumption~1]{Bolte23} to study stochastic optimization. In this paper, we further study this class of locally Lipschitz functions,}
and develop new GS-based zeroth-order optimization algorithms for minimizing this class of functions. Our main contributions are threefold.
\begin{enumerate}
  \item We {\color{revise}study} the class of subdifferentially polynomially bounded (SPB) functions, which is the subclass of locally Lipschitz functions with a Lipschitz modulus that grows at most \emph{polynomially}. The class of SPB functions is rich, {\color{revise}encompassing not only all functions that are Lipschitz, or gradient-Lipschitz, or Hessian-Lipschitz, but also certain nonsmooth locally Lipschitz functions, such as functions arising from neural networks; see Examples~\ref{remark0612}\ref{remark0612_v}-\ref{remark0612_vi}.}
We show that if $f$ is SPB, then its GS $f_\sigma$ is well-defined and continuously differentiable; moreover, $f_\sigma$ and its partial derivatives are SPB too. We also establish a relationship between $\nabla f_\sigma$ and the Goldstein $\delta$-subdifferential of an SPB function $f$, which allows us to quantify the approximate stationarity of a point $x$ with respect to $f$ by measuring $\nabla f_\sigma (x)$. The Goldstein $\delta$-subdifferential is a commonly used subdifferential for studying stationarity of nonsmooth functions \cite{Goldstein1977,Zhang2020}, and our result can be viewed as an extension of \cite[Theorem~2]{NesterovGS} and \cite[Theorem~3.1]{Lin2022} from globally Lipschitz to SPB functions.

\item We {\color{revise}devise GS-based zeroth-order algorithms for SPB minimization under two different settings: the constrained convex setting (where the objective function $f$ is convex) and the unconstrained non-convex setting (where the feasible region $\Omega = \mathbb{R}^d$).}
Our {\color{revise}algorithms} update the iterate $x^k$ by moving along an approximate negative gradient direction with an adaptive stepsize depending inversely on a polynomial of $\|x^k\|$, and the approximate gradient is {\color{revise}obtained as} the random vector $ (f(x^k + \sigma u) - f(x^k))u/\sigma$ with $u\sim \mathcal{N} (0,I)$. We analyze the iteration complexity of the proposed algorithms.
The crux of our analysis is a novel descent lemma for $f_\sigma$ analogous to the standard descent lemma for Lipschitz differentiable functions, where the Lipschitz constant is replaced by a polynomial function.

\item {\color{revise} In the unconstrained non-convex setting, the above-mentioned complexity result is with respect to the GS $f_\sigma$ but not the original objective function $f$. Therefore, we also analyze the iteration complexity of our proposed algorithms for computing a $(\delta, \epsilon)$-stationary point, a notion of approximate Goldstein stationary point, with respect to the original objective function.}
\end{enumerate}

The remainder of this paper unfolds as follows. We present the notation and preliminary materials in section~\ref{sec2}. Section~\ref{sec3orig} introduces subdifferentially polynomially bounded functions and studies their properties in relation to GS and Goldstein $\delta$-stationarity.  In {\color{revise}sections~\ref{sec4} and \ref{sec44}}, we prove the descent lemma and develop our GS-based zeroth-order algorithms for {\color{revise}minimizing} SPB functions.

\section{Notation and preliminaries}\label{sec2}
Throughout this paper, we let $\mathbb{R}^{d}$ denote the Euclidean space of dimension $d$ equipped with the standard inner product $\langle\cdot,\cdot\rangle$. For any $x\in \R^d$, we let $\|x\|$ denote its Euclidean norm, and $\mathbb{B}(x,r)$ denote the closed ball in $\R^d$ with center $x$ and radius $r\geq 0$. We use $\B_r$ to denote $\mathbb{B}(0,r)$, and further use $\mathbb{B}$ to denote $\B_1$. We let $I=[e_1,e_2,\cdots,e_d]$ denote the $d\times d$ identity matrix, where $e_i\in \R^d$ is the $i$-th canonical basis vector for $i = 1,\dots,d$, i.e., $(e_i)_j = 1$ if $j=i$ and $(e_i)_j = 0$ otherwise.

For a subset $D\subseteq \R^d$, we let $D^c$, $\bd D$ and ${\rm conv}(D)$ denote its complement, boundary and convex hull, respectively; we also denote the characteristic function of $D$ by
\[
	\mathds{1}_{D}(x)=
	\begin{cases}
		1& {\rm if}\ x\in{D},\\
		0& {\rm if}\ x\notin{D}.
	\end{cases}
	\]
	
{\color{revise}For a closed set $S\subseteq \R^d$,
the distance from an $x\in \R^d$ to $S$ is defined as ${\rm dist}(x,S) = \inf_{y\in S}\|x - y\|$. For a closed convex set $S$, the (unique) projection of an $x\in \R^d$ onto $S$ is denoted by $P_S(x)$; also,} the normal cone of $S$ at any $x\in S$ is defined as
	\begin{equation*}
	N_{S}(x)= \{y\in \R^d:\; \langle y, u - x\rangle\le 0\ \ \forall u\in S\}.
	\end{equation*}

	
For a locally Lipschitz function $f: \mathbb{R}^{d}\rightarrow \mathbb{R}$, the Clarke directional derivative of $f$ (see \cite[Page~25]{Clarke1983}) at any $x\in \R^d$ in the direction $v\in \mathbb{R}^d$ is defined as
	\[
	f^{\rm o}(x;v)= \limsup_{x'\rightarrow x,t\downarrow 0} \frac{f(x'+tv)-f(x')}{t},
	\]
and the Clarke subdifferential of $f$ (see \cite[Page~27]{Clarke1983}) at $x$ is the set
\begin{equation*}
  \partial_Cf(x) = \{s\in \R^d:\; \langle s,v\rangle\le f^{\rm o}(x;v)\ \ \ \forall v\in \R^d\}.
\end{equation*}
The Clarke directional derivative and Clarke subdifferential are related as follows:
\begin{equation*}
f^{\rm o}(x;v)= \max_{s\in \partial_C f(x)} \langle s,v\rangle;
\end{equation*}
also, letting ${\color{revise}\Upsilon_f}$ be the set of points at which $f$ is not differentiable, we have
\begin{equation}\label{def_Clarkesubdiff}
	\partial_C f(x) = {\rm conv}\big(\big\{ s\in \mathbb{R}^d: \exists\{x^k\} \subset \R^d\setminus{\color{revise}\Upsilon_f} ~{\rm with}~x^k \rightarrow x~{\rm and}~\nabla f(x^k)\to s\big\}\big);
\end{equation}
see \cite[Propositions~2.1.2(b)]{Clarke1983} and \cite[Theorem~2.5.1]{Clarke1983}.

Next, for any $\delta > 0$, the Goldstein $\delta$-subdifferential \cite{Goldstein1977} of $f$ at $x\in \R^d$ is the set
\begin{equation}\label{def_Gsubdiff}
	\partial^\delta_G f(x) = {\rm conv}\bigg(\,{\bigcup}_{y \in \mathbb{B}(x, \delta)} \partial_C f(y)\bigg).
\end{equation}
Note that at any $x\in \R^d$, both the Clarke subdifferential and Goldstein $\delta$-subdifferential are compact convex sets.

\section{Subdifferentially polynomially bounded functions}\label{sec3orig}\
{\color{revise}In this section, we study} the class of subdifferentially polynomially bounded (SPB) functions.\footnote{{\color{revise}Condition \eqref{definitioneq01} with ${\rm R}_1 = {\rm R}_2$ is equivalent to \cite[Assumption 1.1]{Bolte23} when the function $f(x,s)$ there is constant in $s$, i.e., in the deterministic setting.}}
\begin{definition}[{{Subdifferentially polynomially bounded functions}}]\label{def:polybd}
Let $f: \R^d \rightarrow \R$ be locally Lipschitz continuous. We say that $f$ is subdifferentially polynomially bounded (SPB) if there exist ${\rm R_1}\geq0$, ${\rm R_2} > 0$ and an integer {\color{revise}$m\geq0$ with ${\rm R}_1 = 0$ if and only if $m = 0$} such that\footnote{{\color{revise} We adopt the convention $0^0=1$ when $x = 0$ and $m=0$ (in which case ${\rm R_1}=0$).}}
\begin{equation}\label{definitioneq01}
\sup\limits_{\zeta\in\partial_{C}f(x)}\|\zeta\|\leq {\rm R_1}\|x\|^{m}+{\rm R_2}\ \ \ \ \ \ \forall x\in \R^d.
\end{equation}
The class of SPB functions on $\R^d$ is denoted by $\spb$.
\end{definition}

Note that using calculus rules for Clarke subdifferential (see Corollary~2 of \cite[Proposition~2.3.3]{Clarke1983}), one can show that $\spb$ is a vector space. Also, $\spb$ generalizes the class of globally Lipschitz functions, which correspond to the case of ${\rm R_1} = 0$ in \eqref{definitioneq01}; see Example~\ref{remark0612}\ref{remark0612_i} below.
In fact, the SPB class is much richer than that and covers a wide variety of functions that arise naturally in many contemporary applications.
Here, we present some concrete examples of SPB functions.

\begin{example}\label{remark0612}
\begin{enumerate}[label={\rm (\roman*)}]
  \item\label{remark0612_i} If $f:\R^d\to \R$ is globally Lipschitz continuous with Lipschitz continuity modulus $L > 0$, then we have from \cite[Proposition~2.1.2(a)]{Clarke1983} that $\sup\limits_{u\in\partial_{C}f(x)}\|u\|\le L$ for all $x\in \R^d$. Consequently, $f$ is SPB.
  \item\label{remark0612_ii} Every polynomial function is SPB.
  \item\label{remark0612_iii} Any continuously differentiable function with a Lipschitz gradient is SPB. To see this, let $g$ be such a function, {\color{revise}then} there exists $L > 0$ such that
      \[
      \|\nabla g(x)\|\le \|\nabla g(x) - \nabla g(0)\| + \|\nabla g(0)\|\le L\|x\| + \|\nabla g(0)\|\ \ \ \ \forall x \in \R^d,
      \]
      showing that $g$ is SPB {\color{revise}(with $m = 1$). The converse is however not true, as an SPB function with $m=1$ is not necessarily differentiable.}
  \item\label{remark0612_iv} Let $f=g\circ h$, where $g:\mathbb{R}^{n}\rightarrow\mathbb{R}$ and $h:\mathbb{R}^{d}\rightarrow\mathbb{R}^{n}$. Assume that $g$ and all component functions of $h$ are SPB. Then one can deduce from \cite[Theorem~2.3.9]{Clarke1983} that $f$ is SPB.

\item \label{remark0612_v}
In machine learning, one is often interested in approximating the unknown relationship between an independent variable $x\in \mathbb{R}^d$ and a dependent variable $y\in\mathbb{R}$.
An $L$-layer neural network is a parametric approximation of the form
\begin{equation}\label{NNdef}
y = \psi(x; w)=\varrho_{L}(W_{L}(\varrho_{L-1}(W_{L-1}(\cdots\varrho_{1}(W_{1}(x))\cdots)))) ,
\end{equation}
where for $\ell = 1,\dots, L$, $\varrho_{\ell}:\mathbb{R}\to\mathbb{R}$ is the activation function for the $\ell$-th layer (for any vector $z$, the notation $\varrho_\ell (z)$ is understood as the vector obtained by applying the activation function $\varrho_\ell$ entrywise to $z$), $W_{\ell}:\mathbb{R}^{p_{\ell}}\rightarrow\mathbb{R}^{p_{\ell+1}}$ is an affine mapping for some positive integers $p_{\ell}$ and $p_{\ell+1}$ (with $p_1 = d$ and $p_{L+1} = 1$), and $w$ is called the parameter and represents the vector of all coefficients defining the maps $W_1,\dots, W_L$; see \cite[section~6.2]{Bolte2021} for details. Common activation functions include: $\varrho(t)=t$ (often used for the output layer), $\varrho(t)=\tanh(t)$, $\varrho(t)=\ln(1+e^{t})$, $\varrho(t)=\max\{0,t\}$, $\varrho(t)=\max\{0,t\}+\alpha\min\{0,t\}$ with $\alpha>0$, $\varrho(t)=\frac{1}{1+e^{-t}}$, and piecewise polynomial functions. With any of these activation functions, Example~\ref{remark0612}\ref{remark0612_iv} implies that the neural network function $\psi(\cdot; w)$ is SPB for any fixed parameter $w$.

\item\label{remark0612_vi} Suppose that we are given a sample $\{(x_i, y_i)\}_{i=1}^n$ of $n$ data points for approximating the unknown relationship between $x$ and $y$. Naturally, we want to find the best parameter $w$ so that the function $\psi(\cdot; w)$ in \eqref{NNdef} fits the sample data as well as possible, a process called ``training". A popular formulation for the best parameter $w$ is given by the least squares criterion:
\begin{equation*}\label{opt:NN}
\min_{w}\; \sum_{i=1}^n (y_i - \psi(x_i; w))^2.
\end{equation*}	
Again by Example~\ref{remark0612}\ref{remark0612_iv}, the objective function in this problem is SPB.
\end{enumerate}
\end{example}

\subsection{Gaussian smoothing of SPB functions}\label{sec:GS}

We next study the properties of SPB functions in relation to Gaussian smoothing (GS) \cite{Polyak1987, NesterovGS, Larson2019, Nemirovsky83}.
More precisely, we will show that for any SPB function, both the GS and its gradient are well-defined and that the class $\spb$ is closed under the GS transformation. Towards that end, we record a simple property of SPB functions that will be repeatedly used in the paper. Specifically, we express the Lipschitz modulus of an SPB function in terms of a sum of functions in $x$ and the \emph{displacement} $y - x$. This explicit dependence on the displacement is crucial for our subsequent analysis, especially in section~\ref{sec4}.
\begin{lemma}\label{prop:useful}
  Let $f\in \spb$ with parameters ${\rm R}_1$, ${\rm R}_2$ and $m$ as in \eqref{definitioneq01}. Then 
  \[
  |f(x)-f(y)|\leq(2^{m-1}{\rm R}_{1}\|x\|^{m}+2^{m-1}{\rm R}_{1}\|y - x\|^{m}+{\rm R}_{2})\|x - y\|\ \ \ \ \ \forall x,y\in \R^d.
  \]
\end{lemma}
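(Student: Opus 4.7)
\medskip

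\noindent\textbf{Proof plan.} The natural tool is Lebourg's mean value theorem for locally Lipschitz functions (see \cite[Theorem~2.3.7]{Clarke1983}), which asserts that for any $x,y\in\R^d$ there exist $t\in(0,1)$ and $\zeta\in\partial_{C}f(z)$, with $z := x + t(y-x)$, such that
\[
f(y)-f(x)=\langle \zeta,\, y-x\rangle.
\]
Since $f$ is SPB, this is directly available. Taking absolute values and using the Cauchy--Schwarz inequality gives
\[
|f(x)-f(y)|\;\leq\;\|\zeta\|\,\|x-y\|\;\leq\;\bigl({\rm R}_1\|z\|^{m}+{\rm R}_2\bigr)\|x-y\|,
\]
where the second inequality applies the defining bound \eqref{definitioneq01} at the point $z$.

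Next I would bound $\|z\|^m$ in terms of $\|x\|$ and $\|y-x\|$. By the triangle inequality,
\[
\|z\|=\|x+t(y-x)\|\leq \|x\|+t\|y-x\|\leq \|x\|+\|y-x\|.
\]
For $m\geq 1$, the convexity of $s\mapsto s^{m}$ on $[0,\infty)$ gives the standard inequality $(a+b)^{m}\leq 2^{m-1}(a^{m}+b^{m})$ for all $a,b\geq 0$, hence
\[
\|z\|^{m}\leq (\|x\|+\|y-x\|)^{m}\leq 2^{m-1}\bigl(\|x\|^{m}+\|y-x\|^{m}\bigr).
\]
Substituting this into the previous display yields the claim for $m\geq1$.

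The remaining case $m=0$ is by convention of the SPB definition paired with ${\rm R}_1=0$, so that \eqref{definitioneq01} reduces to the uniform bound $\|\zeta\|\leq{\rm R}_2$, and the inequality $|f(x)-f(y)|\leq {\rm R}_2\|x-y\|$ follows directly from Lebourg's mean value theorem; this is consistent with the asserted bound since $2^{m-1}{\rm R}_1=0$ in this case. I do not foresee any real obstacle in this proof: the only ingredients are Lebourg's mean value theorem for locally Lipschitz functions, the polynomial growth bound \eqref{definitioneq01}, and the elementary convexity inequality for $s\mapsto s^{m}$. The slightly delicate point is simply to record where the factor $2^{m-1}$ comes from (namely, from the convexity of $s^{m}$ applied along the segment connecting $x$ and $y$), which motivates the form of the bound in terms of $\|x\|^{m}$ and $\|y-x\|^{m}$ rather than $\|x\|^{m}$ and $\|y\|^{m}$.
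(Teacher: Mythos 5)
Your proof is correct and follows essentially the same route as the paper: both invoke Lebourg's mean value theorem (\cite[Theorem~2.3.7]{Clarke1983}), apply the SPB bound \eqref{definitioneq01} at the intermediate point, and use the convexity inequality $(a+b)^m\le 2^{m-1}(a^m+b^m)$ for $m\ge1$, with the $m=0$ case handled trivially via ${\rm R}_1=0$. No gaps.
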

\begin{proof}
  From \cite[Theorem~2.3.7]{Clarke1983}, we have
\begin{equation*}
f(x)-f(y)\in\{\langle \zeta,x - y\rangle:\; \zeta \in \partial_C f(x+\alpha (y-x)),\ \alpha \in (0,1)\}.
\end{equation*}
In view of this and \eqref{definitioneq01}, we deduce further that
\begin{align*}
&|f(x)-f(y)|\leq \sup_{\alpha\in (0,1)}\{{\rm R}_{1}\|x+\alpha (y - x)\|^{m}+{\rm R}_{2}\}\|x-y\|\\
&\leq(2^{m-1}{\rm R}_{1}\|x\|^{m}+2^{m-1}{\rm R}_{1}\|y - x\|^{m}+{\rm R}_{2})\|x - y\|,
\end{align*}
where the second inequality follows from {\color{revise}the fact that $\alpha\in (0,1)$ and} the convexity of the function $\|\cdot\|^m$ {\color{revise} when $m \ge 1$, and the inequality holds trivially when $m = 0$ (in which case $R_1 = 0$) with the convention $0^0 = 1$.}
\end{proof}
	

{\color{revise}When the function $f$ is not globally Lipschitz, its GS $f_\sigma$ is not necessarily defined. This is exemplified by the function $f(x) = e^{\|x\|^4}$.}
The theorem below asserts that for any SPB function $f$, the GS $f_\sigma$ and its gradient $\nabla f_\sigma$ are both well-defined. {\color{revise} An explicit formula for $\nabla f_\sigma$ is also proved. The crux for the proof of the formula lies in the interchangeability of differentiation and integration, which we achieve by using the theory of Schwartz {\color{rerevise}spaces} and tempered distributions. Roughly speaking, {\color{rerevise}a Schwartz space} consists of $C^\infty$ functions whose derivatives of any order decay faster than any polynomial (e.g., $e^{-\|\cdot\|^2/2}$; see \cite[Example 2.2.2]{Grafakos2008}), and tempered distributions form its topological dual.}

\begin{theorem}[{{Well-definedness of GS and its gradient}}]\label{lem053101}
Let $f\in \spb$ with parameters ${\rm R}_1$, ${\rm R}_2$ and $m$ as in \eqref{definitioneq01}. Then its GS $f_\sigma$, given in Definition~\ref{def_GSfunction}, is well-defined. Moreover, the gradient of $f_\sigma$ is given by
	\begin{equation}\label{GSgradient}
		\nabla f_\sigma(x) = {\color{revise}\sigma^{-1}}\mathbb{E}_{u\sim\mathcal{N}(0,I)}[f(x + \sigma u)u]
	\end{equation}
and is well-defined and continuous.
\end{theorem}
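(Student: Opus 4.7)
My plan is to prove the theorem in three steps: verify that $f_\sigma(x)$ is finite for every $x$; justify differentiation under the integral sign to derive the formula \eqref{GSgradient}; and finally obtain continuity of $\nabla f_\sigma$ via dominated convergence.

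For the first step, I would apply Lemma~\ref{prop:useful} with the second argument set to $0$ and combine with $|f(0)|<\infty$ to obtain the polynomial growth estimate $|f(z)| \le |f(0)| + 2^{m}{\rm R_1}\|z\|^{m+1} + {\rm R_2}\|z\|$ for all $z\in\R^d$. Plugging in $z = x + \sigma u$ and using $\|x+\sigma u\|^{k} \le 2^{k-1}(\|x\|^k + \sigma^k\|u\|^k)$ for $k \in \{1, m+1\}$, we see that both $|f(x+\sigma u)|$ and $\|u\|\cdot|f(x+\sigma u)|$ are dominated, as functions of $u$ at fixed $x$, by a polynomial in $\|u\|$. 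Since all Gaussian moments are finite, this shows that $f_\sigma(x)$ and the integral on the right-hand side of \eqref{GSgradient} are both absolutely convergent.

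The second step is the crux. Changing the integration variable from $u$ to $y = x+\sigma u$, one may rewrite $f_\sigma(x) = \int_{\R^d} f(y)\psi_\sigma(y-x)\,dy$, where $\psi_\sigma(z):= (2\pi\sigma^2)^{-d/2}e^{-\|z\|^2/(2\sigma^2)}$ is a Schwartz function. By the polynomial growth of $|f|$ established in the first step, $f$ defines a tempered distribution on $\R^d$, so the standard theorem on the convolution of a tempered distribution with a Schwartz function yields that $f_\sigma$ is $C^\infty$ and that differentiation passes through the integral onto $\psi_\sigma$. Using $\nabla\psi_\sigma(z) = -\sigma^{-2}z\,\psi_\sigma(z)$ and reversing the change of variables then gives \eqref{GSgradient} after a direct calculation. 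A self-contained alternative, which I may include for transparency, is to invoke Leibniz's rule directly: the partial derivative of the integrand in $x_i$ equals $\sigma^{-2}f(y)(y-x)_i\psi_\sigma(y-x)$, and the polynomial bound on $|f|$ together with the super-polynomial decay of $\psi_\sigma$ readily produces an integrable dominant, uniformly in $x$ on any compact neighborhood of a fixed point.

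The third step, continuity of $\nabla f_\sigma$, follows from one more dominated convergence argument: for any sequence $x_n\to x$, pointwise continuity of $f$ gives $f(x_n+\sigma u)u\phi(u) \to f(x+\sigma u)u\phi(u)$, and since $\{x_n\}$ is bounded the polynomial-in-$\|u\|$ bound on $|f(x_n+\sigma u)|\cdot\|u\|$ multiplied by the Gaussian density provides an $n$-uniform, $u$-integrable dominant. The principal obstacle is concentrated in the second step --- justifying the derivative/integral interchange in the absence of global Lipschitz continuity or smoothness of $f$ --- and it is precisely here that the polynomial growth control supplied by Lemma~\ref{prop:useful}, coupled with the rapid decay of the Gaussian kernel, is indispensable.
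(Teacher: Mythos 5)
Your proposal is correct and follows essentially the same route as the paper: well-definedness via the polynomial growth bound from Lemma~\ref{prop:useful} together with finiteness of Gaussian moments, the gradient formula via viewing $f$ (which has polynomial growth) as a tempered distribution paired against the Gaussian Schwartz kernel so that differentiation passes onto the kernel (the paper reproves this step by hand following \cite[Theorem~2.3.20]{Grafakos2008}, whereas you cite the convolution theorem directly), and continuity of $\nabla f_\sigma$ by dominated convergence. Your remark that an elementary Leibniz-rule argument with a locally uniform integrable dominant would also suffice is sound and would give a self-contained alternative to the distributional machinery.
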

\begin{proof}For any $x\in \R^d$, we have
\begin{align}\label{1stbd}
&\mathbb{E}_{u\sim\mathcal{N}(0,I)}[|f(x+\sigma u)|]\leq\mathbb{E}_{u\sim\mathcal{N}(0,I)}[|f(x+\sigma u)-f(x)|]+|f(x)|\notag\\
&\leq\mathbb{E}_{u\sim\mathcal{N}(0,I)}[(2^{m-1}{\rm R}_{1}\|x\|^{m}+2^{m-1}{\rm R}_{1}\sigma^{m}\|u\|^{m}+{\rm R}_{2})\cdot \sigma\|u\|]+|f(x)| < \infty,
\end{align}
where the second inequality {\color{revise}follows from Lemma~\ref{prop:useful}.}
Therefore, $f_{\sigma}$ is well-defined.

We next prove \eqref{GSgradient} and the well-definedness of the expectation there. First, by the definition of GS, we have
\begin{align}\label{eq042701}
f_{\sigma}(x)=\frac{1}{(2\pi)^\frac{d}2}\int_{\R^d}f(x+\sigma u)e^{-\frac{\|u\|^{2}}{2}}du=\frac{1}{(2\pi)^\frac{d}2\sigma^{d}}\int_{\R^d}f(y)e^{-\frac{\|x-y\|^{2}}{2\sigma^{2}}}dy.
\end{align}
Note that {\color{revise}$e^{-\|\cdot\|^{2}/(2\sigma^{2})}$} is a Schwartz function on $\R^{d}$ according to \cite[Example 2.2.2]{Grafakos2008}. On the other hand, we have the following inequality for each $r > 0$ based on Lemma~\ref{prop:useful}:
\begin{align*}
 \int_{\mathbb{B}_{r}}|f(x)|dx&\le r^{d}\alpha(d)\cdot(|f(0)| + \max_{x\in\mathbb{B}_{r}}|f(x) - f(0)|)\\
 & \le r^{d}\alpha(d)\cdot(|f(0)| + [2^{m-1}{\rm R}_1r^m + {\rm R}_2]r) =: {\frak U}(r),
\end{align*}
where $\alpha(d)$ is the volume of the $d$-dimensional unit ball $\B$.
Since ${\frak U}(r) = O(r^{m+d+1})$ as $r\to \infty$, in view of \cite{Stein2011},\footnote{Specifically, see the last example on page 106.} we see that ${\frak F}(g):=\frac{1}{(2\pi)^\frac{d}2\sigma^{d}}\int_{\R^{d}}f(y)g(y)dy$ is a continuous linear functional on {\color{rerevise}Schwartz spaces} (i.e., a tempered distribution).

The next part follows {\color{revise}closely the proof} of \cite[Theorem~2.3.20]{Grafakos2008}, which is included for self-containedness. Specifically, from \eqref{eq042701}, we see that for any $h\in \R\backslash\{0\}$ and any $i \in \{1,\ldots,d\}$,
\begin{align*}
  \frac{f_{\sigma}(x+he_{i})-f_{\sigma}(x)}{h}&=\frac{1}{(2\pi)^\frac{d}2\sigma^{d}}\int_{\R^{d}}f(y)\frac{e^{-\frac{\|x+he_{i}-y\|^{2}}{2\sigma^{2}}}-e^{-\frac{\|x-y\|^{2}}{2\sigma^{2}}}}{h}dy\\
  &={\frak F}((e^{-\frac{\|x+he_{i}-y\|^{2}}{2\sigma^{2}}}-e^{-\frac{\|x-y\|^{2}}{2\sigma^{2}}})/h).
\end{align*}
Since $(e^{-\frac{\|x+he_{i}-y\|^{2}}{2\sigma^{2}}}-e^{-\frac{\|x-y\|^{2}}{2\sigma^{2}}})/h\rightarrow -\frac{x_{i}-y_{i}}{\sigma^2}e^{-\frac{\|x-y\|^{2}}{2\sigma^{2}}}$ as $h \to 0$ in {\color{rerevise}Schwartz spaces} according to \cite[Exercise~2.3.5(a)]{Grafakos2008} and ${\frak F}$ is a tempered distribution, we conclude upon passing to the limit as $h \to 0$ in the above display that
\begin{align*}
  \nabla f_{\sigma}(x)=\bigg[{\frak F}\bigg(-\frac{x_{i}-y_{i}}{\sigma^2}e^{-\frac{\|x-y\|^{2}}{2\sigma^{2}}}\bigg)\bigg]_{i=1}^d
  &= -\frac{1}{(2\pi)^\frac{d}2\sigma^{d+2}}\int_{\R^d}f(y)(x-y)e^{-\frac{\|x-y\|^{2}}{2\sigma^{2}}}dy\\
  &=\frac{1}{(2\pi)^\frac{d}2\sigma}\int_{\R^d}f(x+\sigma u)ue^{-\frac{\|u\|^{2}}{2}}du.
\end{align*}
This proves \eqref{GSgradient} and the well-definedness of the integral.

Finally, the continuity of $\nabla f_{\sigma}$ follows immediately from the above integral representation and the dominated convergence theorem, where the required integrability assumption can be established in a similar way to \eqref{1stbd}.
\end{proof}

The next result shows in particular that $\spb$ is closed under the GS transformation and that if $f$ is SPB, so are its partial derivatives. {\color{revise}The case $m = 0$ was already established in \cite{NesterovGS}.}
\begin{theorem}\label{prop:gsgradient}
  Let $f\in \spb$ with parameters ${\rm R}_1$, ${\rm R}_2$ and $m$ as in \eqref{definitioneq01} and let $f_\sigma$ be defined in Definition~\ref{def_GSfunction}. Then the following statements hold.
\begin{enumerate}[label={\rm (\roman*)}]
  \item\label{prop:gsgradient_i} It holds that
  \begin{equation}\label{eq110601}
|f_{\sigma}(x)- f_{\sigma}(y)|\leq (\mathfrak{A}+\mathfrak{B}\|x\|^{m}+\mathfrak{C}\|y-x\|^{m})\|x-y\| \ \ \ \ \ \forall x,y\in \R^d,
  \end{equation}
  where $ \mathfrak{A} = 2^{2m-2}{\rm R}_{1}\sigma^{m}(m+d)^{\frac{m}{2}} + {\rm R_2}$, $\mathfrak{B}=2^{2m-2}{\rm R}_{1}$ and $\mathfrak{C}=2^{m-1}{\rm R}_{1}$. In particular, $f_\sigma$ is SPB.

  \item\label{prop:gsgradient_ii} It holds that
  \begin{equation}\label{eq102801}
\|\nabla f_{\sigma}(x)-\nabla f_{\sigma}(y)\|\leq (\mathcal{A}+\mathcal{B}\|x\|^{m}+\mathcal{C}\|y-x\|^{m})\|x-y\|\ \ \ \ \ \forall x,y\in \R^d,
  \end{equation}
where $ \mathcal{A} = 2^{2m-2}{\rm R}_{1}\sigma^{m-1}(m+1+d)^{\frac{m+1}{2}}+{\color{revise}\sigma^{-1}}\mathrm{R}_{2}\sqrt{d}$, $\mathcal{B}=2^{2m-2}{\color{revise}\sigma^{-1}}{\rm R}_{1}\sqrt{d}$ and $\mathcal{C}=2^{m-1}{\color{revise}\sigma^{-1}}{\rm R}_{1}\sqrt{d}$. In particular, $\tfrac{\partial f_\sigma}{\partial x_i}$ is SPB for any $i$.
\end{enumerate}
\end{theorem}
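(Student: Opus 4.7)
For part (i), the plan is to start from $|f_\sigma(x) - f_\sigma(y)| \le \mathbb{E}_{u\sim \mathcal N(0,I)}|f(x+\sigma u)-f(y+\sigma u)|$ by Jensen's inequality, and then apply Lemma~\ref{prop:useful} to the shifted pair $(x+\sigma u,\ y+\sigma u)$. Since $\|(y+\sigma u)-(x+\sigma u)\|=\|y-x\|$, this yields
\[
|f_\sigma(x)-f_\sigma(y)| \le \bigl(2^{m-1}{\rm R}_1\,\mathbb{E}_u\|x+\sigma u\|^{m} + 2^{m-1}{\rm R}_1 \|y-x\|^{m} + {\rm R}_2\bigr)\|x-y\|.
\]
The main computation is then to bound $\mathbb{E}_u\|x+\sigma u\|^m$. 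For $m\ge 1$, convexity of $\|\cdot\|^m$ gives $\|x+\sigma u\|^m\le 2^{m-1}(\|x\|^m+\sigma^m\|u\|^m)$, after which I would invoke the standard Gaussian moment bound $\mathbb{E}_{u\sim\mathcal N(0,I)}\|u\|^p\le (d+p)^{p/2}$ (cf.\ \cite[Lemma~1]{NesterovGS}) to control $\mathbb{E}_u\|u\|^m$ by $(d+m)^{m/2}$, producing exactly the constants $\mathfrak{A},\mathfrak{B},\mathfrak{C}$ claimed in \eqref{eq110601}. The $m=0$ case is immediate from ${\rm R}_1=0$.

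To conclude that $f_\sigma\in \spb$, I would use the fact that $f_\sigma$ is differentiable by Theorem~\ref{lem053101}. Setting $y=x+tv$ with $\|v\|=1$ in \eqref{eq110601}, dividing by $|t|$, and letting $t\to 0$, the term $\mathfrak{C}|t|^m\to 0$ and I recover $|\langle \nabla f_\sigma(x),v\rangle|\le \mathfrak{A}+\mathfrak{B}\|x\|^m$. Taking the supremum over $\|v\|=1$ gives $\|\nabla f_\sigma(x)\|\le \mathfrak{A}+\mathfrak{B}\|x\|^m$, so $f_\sigma$ is SPB with parameters $(\mathfrak{B},\mathfrak{A},m)$.

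For part (ii), I would exploit the ``score function'' representation \eqref{GSgradient} from Theorem~\ref{lem053101}: writing
\[
\nabla f_\sigma(x)-\nabla f_\sigma(y) = \sigma^{-1}\mathbb{E}_u\bigl[(f(x+\sigma u)-f(y+\sigma u))\,u\bigr],
\]
Jensen's inequality then yields $\|\nabla f_\sigma(x)-\nabla f_\sigma(y)\|\le \sigma^{-1}\mathbb{E}_u\bigl[|f(x+\sigma u)-f(y+\sigma u)|\,\|u\|\bigr]$. Applying Lemma~\ref{prop:useful} to the shifted pair as before produces three terms whose expectations against $\|u\|$ I need to bound. Specifically, I would use $\mathbb{E}_u\|u\|\le\sqrt{d}$, the convexity bound $\|x+\sigma u\|^m\le 2^{m-1}(\|x\|^m+\sigma^m\|u\|^m)$, and the Gaussian moment bound $\mathbb{E}_u\|u\|^{m+1}\le(d+m+1)^{(m+1)/2}$ to obtain exactly the constants $\mathcal A,\mathcal B,\mathcal C$ in \eqref{eq102801}.

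Finally, to conclude that each $\partial f_\sigma/\partial x_i$ is SPB, note that \eqref{eq102801} makes each component of $\nabla f_\sigma$ locally Lipschitz on $\R^d$; thus $\partial f_\sigma/\partial x_i$ is differentiable a.e.\ on $\R^d$ by Rademacher's theorem. At any point of differentiability $x$, I would repeat the $y=x+tv$, $t\to 0$ argument from part (i), now applied coordinatewise, to get $\|\nabla(\partial f_\sigma/\partial x_i)(x)\|\le \mathcal A+\mathcal B\|x\|^m$. Since the right-hand side is continuous in $x$, the characterization \eqref{def_Clarkesubdiff} of the Clarke subdifferential propagates this bound through limits and convex combinations, yielding \eqref{definitioneq01} for $\partial f_\sigma/\partial x_i$ with parameters $(\mathcal B,\mathcal A,m)$. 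The main technical obstacle is the careful bookkeeping of the Gaussian moments and the $m=0$ edge case (where ${\rm R}_1=0$ and the convexity inequality degenerates), but these are handled by the convention $0^0=1$ as in Lemma~\ref{prop:useful}.
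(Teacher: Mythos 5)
Your proposal is correct and follows essentially the same route as the paper: Jensen's inequality together with Lemma~\ref{prop:useful} applied to the shifted pair $(x+\sigma u, y+\sigma u)$, the convexity bound $\|x+\sigma u\|^m\le 2^{m-1}(\|x\|^m+\sigma^m\|u\|^m)$, and the Gaussian moment bounds from \cite[Lemma~1]{NesterovGS}, producing the same constants $\mathfrak{A},\mathfrak{B},\mathfrak{C}$ and $\mathcal{A},\mathcal{B},\mathcal{C}$. The only cosmetic difference is in the ``In particular, SPB'' claims, where you pass through gradient bounds at differentiability points and the characterization \eqref{def_Clarkesubdiff}, while the paper bounds the Clarke directional derivative directly from the Lipschitz-type estimate; both limiting arguments are valid.
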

\begin{proof}
{\color{revise}The case $m=0$ (in which case we have $\mathrm{R}_{1}=0$) was studied in \cite{NesterovGS}, with item (i) proved in the display before \cite[Eq.~(12)]{NesterovGS}, and item (ii) proved in \cite[Lemma~2]{NesterovGS}.}

{\color{revise}We next consider the case $m \ge 1$.} We first observe from Lemma \ref{prop:useful} that for every $x$, $y$ and $u\in \R^d$,
\begin{align}\label{eq110602}
&|f(x+\sigma u)-f(y+\sigma u)|\notag\\
&\leq (2^{m-1}\mathrm{R}_{1}\|x+\sigma u\|^{m}+2^{m-1}\mathrm{R}_{1}\|y-x\|^{m}+\mathrm{R}_{2})\|y-x\|\notag\\
& \le(2^{2m-2}{\rm R}_{1}\sigma^{m}\|u\|^{m}+2^{2m-2}{\rm R}_{1}\|x\|^{m}+2^{m-1}{\rm R}_{1}\|y-x\|^{m}+{\rm R}_{2})\|x-y\|,
\end{align}
where the second inequality follows from the convexity of $\|\cdot\|^m$ when $m\ge1$.

To prove~\ref{prop:gsgradient_i}, from \eqref{eq110602}, one has for any $x\neq y$ that
\begin{align*}
&\frac{|f_{\sigma}(x)-f_{\sigma}(y)|}{\|x - y\|}\le\frac{\mathbb{E}_{u\sim\mathcal{N}(0,I)}[|f(x+\sigma u)-f(y+\sigma u)|]}{\|x - y\|}\\
&\le\mathbb{E}_{u\sim\mathcal{N}(0,I)}[(2^{2m-2}{\rm R}_{1}\sigma^{m}\|u\|^{m}+2^{2m-2}{\rm R}_{1}\|x\|^{m}+2^{m-1}{\rm R}_{1}\|y-x\|^{m}+{\rm R}_{2})]\\
&=(2^{2m-2}\mathrm{R}_{1}\|x\|^{m}+2^{m-1}\mathrm{R}_{1}\|y-x\|^{m}+\mathrm{R}_{2})
+2^{2m-2}\mathrm{R}_{1}\sigma^{m}\mathbb{E}_{u\sim\mathcal{N}(0,I)}[\|u\|^{m}]\\
&\le(2^{2m-2}\mathrm{R}_{1}\|x\|^{m}+2^{m-1}\mathrm{R}_{1}\|y-x\|^{m}+\mathrm{R}_{2})
+2^{2m-2}\mathrm{R}_{1}\sigma^{m}(m+d)^{\frac{m}{2}},
\end{align*}
where {\color{revise}the last inequality} follows from \cite[Lemma~1]{NesterovGS}. This proves \eqref{eq110601}.

Now, fix any $x\in \R^d$ and $v\in \R^d$ with $\|v\|=1$. We have for any $\xi \in \partial_C f_\sigma(x)$ that
\begin{align*}
  \langle \xi,v\rangle & \le \limsup_{x'\to x, t\downarrow 0}\frac{f_\sigma(x' + tv)\! - \!f_\sigma(x')}{t}\le \limsup_{x'\to x, t\downarrow 0}(\mathfrak{A}\!+\!\mathfrak{B}\|x'\|^{m}\!+\!\mathfrak{C}t^m) \le \mathfrak{A}+\mathfrak{B}\|x\|^{m}.
\end{align*}
Consequently, it holds that $\|\xi\|\le \mathfrak{A}+\mathfrak{B}\|x\|^{m}$, showing that $f_\sigma \in \spb$.

To prove \ref{prop:gsgradient_ii}, we notice from \eqref{GSgradient} that
\begin{equation*}
\begin{aligned}
\|\nabla f_{\sigma}(x)-\nabla f_{\sigma}(y)\|\leq{\color{revise}\sigma^{-1}}\mathbb{E}_{u\sim\mathcal{N}(0,I)}[|f(x+\sigma u)-f(y+\sigma u)|\cdot \|u\|].
\end{aligned}
\end{equation*}
Combining the above display with \eqref{eq110602}, we have for any $x\neq y$ that
\begin{align*}
&\frac{\|\nabla f_{\sigma}(x)-\nabla f_{\sigma}(y)\|}{\|x - y\|}\\
& \le 2^{2m-2}{\rm R}_{1}\sigma^{m-1}\mathbb{E}_{u\sim\mathcal{N}(0,I)}[\|u\|^{m+1}]\\
& ~~~+{\color{revise}\sigma^{-1}}(2^{2m-2}{\rm R}_{1}\|x\|^{m}+2^{m-1}{\rm R}_{1}\|y-x\|^{m}+{\rm R}_{2})\mathbb{E}_{u\sim\mathcal{N}(0,I)}[\|u\|]\\
& \le 2^{2m-2}{\rm R}_{1}\sigma^{m-1}(m\!+\!1\!+\!d)^{\frac{m+1}{2}}\!+\!{\color{revise}\sigma^{-1}}(2^{2m-2}{\rm R}_{1}\|x\|^{m}\!+\!2^{m-1}{\rm R}_{1}\|y\!-\!x\|^{m}+{\rm R}_{2})\sqrt{d},
\end{align*}
where {\color{revise}the last inequality} follows from \cite[Lemma~1]{NesterovGS}. This proves \eqref{eq102801}.

The claim that $\tfrac{\partial f_\sigma}{\partial x_i}\in\spb$ can now be proved in a similar way to the proof of $f_\sigma\in \spb$ in item~\ref{prop:gsgradient_i}.
\end{proof}

\subsection{Approximate Goldstein stationarity}\label{sec3}

In this subsection, we explore the relationship between the GS gradient 
and the Goldstein $\delta$-subdifferential for SPB functions. We start with the following auxiliary lemma concerning the tail of the Gaussian integral. We let $W_{-1}$ denote the negative real branch of the Lambert $W$ function (see, e.g., \cite{Wfunction2,wfunc_old1955,wfunc_old1973}); this function is defined as the inverse of the function $t\mapsto te^t$ with domain $[-1/e,0)$ and range $(-\infty,-1]$.

\begin{lemma}\label{lem052902}
For any $\nu>0$ and $M\geq \big[{\color{revise}-d\cdot W_{-1}}\big(- \nu^{\frac{2}{d}}/ (2\pi e) \big)\big]^{\frac{1}{2}}$, it holds that
\[
\int_{\|u\|\ge M}e^{-\frac{\|u\|^{2}}{2}}du\leq\nu;
\]
here, we set by convention that $W_{-1}(t) = 0$ if $t<-1/e$.
\end{lemma}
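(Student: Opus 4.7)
The plan is to bound the Gaussian tail by a Chernoff-style argument and then convert the resulting inequality into the stated condition on $M$ via the Lambert $W_{-1}$ function. First, for any $t \in (0, 1)$, using $e^{tM^2/2} \le e^{t\|u\|^2/2}$ whenever $\|u\| \ge M$, I would write
\[
\int_{\|u\|\ge M} e^{-\|u\|^2/2}\,du \;\le\; e^{-tM^2/2}\int_{\R^d} e^{-(1-t)\|u\|^2/2}\,du \;=\; e^{-tM^2/2}\bigl(2\pi/(1-t)\bigr)^{d/2}.
\]
An elementary minimization of the right-hand side over $t \in (0,1)$ yields the optimal choice $t^\ast = 1 - d/M^2$ (valid provided $M^2 > d$), leading to the clean tail bound
\[
\int_{\|u\|\ge M} e^{-\|u\|^2/2}\,du \;\le\; (2\pi e M^2/d)^{d/2}\, e^{-M^2/2}.
\]

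Next, I would show that the stated lower bound on $M$ forces this right-hand side to be at most $\nu$. Raising $(2\pi e M^2/d)^{d/2} e^{-M^2/2} \le \nu$ to the power $2/d$ and rearranging yields the equivalent inequality $(-M^2/d)\,e^{-M^2/d} \ge -\nu^{2/d}/(2\pi e)$. Setting $\tau := -M^2/d$, the map $\tau \mapsto \tau e^\tau$ is strictly decreasing on $(-\infty,-1]$ (its derivative $(1+\tau)e^\tau$ is negative there), so this inequality is equivalent to $\tau \le W_{-1}(-\nu^{2/d}/(2\pi e))$, that is, $M^2 \ge -d\, W_{-1}(-\nu^{2/d}/(2\pi e))$, which is exactly the hypothesis. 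Moreover, since $W_{-1}(x) \le -1$ for all $x \in [-1/e,0)$, this hypothesis forces $M^2 \ge d$, which in turn validates the Chernoff optimization step above.

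For the edge case where $-\nu^{2/d}/(2\pi e) < -1/e$ (equivalently $\nu > (2\pi)^{d/2}$), the paper's convention sets $W_{-1}(\cdot) = 0$, so the hypothesis reduces to $M \ge 0$ and the conclusion is immediate because $\int_{\R^d} e^{-\|u\|^2/2}\,du = (2\pi)^{d/2} < \nu$. The main obstacle I anticipate is correctly pinning down the monotonicity of $\tau e^\tau$ on $(-\infty,-1]$ and the corresponding branch of the Lambert $W$ function; once this is properly set up, the rest of the argument is a short chain of algebraic rewritings.
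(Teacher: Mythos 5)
Your proof is correct and follows essentially the same route as the paper: both reduce the claim to the tail bound $\int_{\|u\|\ge M}e^{-\|u\|^2/2}du\le(2\pi e M^2/d)^{d/2}e^{-M^2/2}$ for $M^2>d$ and then invert the resulting inequality via the decreasing branch $W_{-1}$, with the same trivial treatment of the case $\nu>(2\pi)^{d/2}$. The only difference is that the paper obtains the tail bound by citing the chi-squared concentration inequality of Dasgupta and Gupta, whereas you derive the identical bound from scratch by a Chernoff/exponential-tilting argument, which makes the proof self-contained.
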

\begin{proof}
Fix any $\nu>0$ and $M\geq \wp:= \big[{\color{revise}-d\cdot W_{-1}}\big(- \nu^{\frac{2}{d}}/ (2\pi e)\big)\big]^{\frac{1}{2}}$.
For any $R \ge 0$,
\begin{align*}
  \int_{\|u\|\geq R}e^{-\frac{\|u\|^{2}}{2}}du=(2\pi)^\frac{d}2\cdot\frac{1}{(2\pi)^\frac{d}2}\int_{\|u\|\geq R}e^{-\frac{\|u\|^{2}}{2}}du=(2\pi)^\frac{d}2[1-F(R^{2};d)],
\end{align*}
where $F(\cdot;k)$ is the cumulative distribution function of the chi-squared distribution with $k$ degrees of freedom. Thus, the desired conclusion is equivalent to
\begin{equation}\label{equiv_conclu}
1-\nu(2\pi)^{-\frac{d}2}\le F(M^{2};d).
\end{equation}
We now prove~\eqref{equiv_conclu}. Note that if $\nu \ge (2\pi)^\frac{d}2$, then we have $1-\nu(2\pi)^{-\frac{d}2}\le 0 \le F(M^{2};d)$. Hence, \eqref{equiv_conclu} is valid in this case. We next consider the case $\nu<(2\pi)^\frac{d}2$. In this case, we have $\nu^\frac2{d} < 2\pi$ and hence $M\geq\wp > \sqrt{d}$.\footnote{The second inequality holds because $W_{-1}(t)< -1$ when $t\in (-1/e,0)$; see \cite[Page~2]{Loczi}.} Then, from \cite[Lemma 2.2]{Dasgupta2003} (see also \cite[Proposition 5.3.1]{So2007}), we have
\begin{align}
1-F(M^{2};d) = 1-F\bigg(\frac{M^{2}}{d}d;d\bigg)\le\bigg(\frac{M^{2}}{d}e^{1-\frac{M^2}{d}}\bigg)^{\frac{d}{2}}.\label{eq042802}
\end{align}
Now, note that we have the following equivalence for any $R\ge 0$
\begin{align}
  \bigg(\frac{R^2}{d}e^{1-\frac{R^2}{d}}\bigg)^{\frac{d}{2}}\le\frac{\nu}{(2\pi)^\frac{d}2}
  \Longleftrightarrow\bigg(-\frac{R^2}{d}\bigg)e^{-\frac{R^2}{d}}\geq-e^{-1}\frac{\nu^{\frac2d}}{2\pi}.\label{eq042901}
\end{align}
Then we have from the definition of Lambert $W$ function that the rightmost inequality (and hence both inequalities) in \eqref{eq042901} holds with $R = M$ since $M\geq\wp$. Thus, the desired conclusion follows from~\eqref{eq042901} and \eqref{eq042802}.
\end{proof}

In the next theorem, we show that for all sufficiently small $\sigma > 0$, {\color{revise}some} Goldstein $\delta$-subgradients can be approximated by the GS gradient $\nabla f_\sigma$. Specifically, we derive a sufficient condition on $\sigma$, in the form of an \emph{explicit} upper bound depending on $\delta$ and $\varepsilon$, for the GS gradient to reside in {\color{revise}a $(1+\|x\|^m)\varepsilon$} neighborhood of the Goldstein $\delta$-subdifferential.
{\color{revise}
This inclusion is proved as follows. We first express the GS gradient as an expectation of the original gradient $\nabla f (x + \sigma u)$ (which exists almost everywhere by {\color{rerevise}Rademacher's theorem}) with respect to the random vector $u \sim\mathcal{N}(0,  I)$, see~\eqref{importantintegral} below. It follows from Definition~\ref{def_Gsubdiff} that $\nabla f (x + \sigma u)$ constitutes a Goldstein subgradient for any realization of $u$ close enough to $x$. Therefore, by dividing the expectation in~\eqref{importantintegral} into two integrals, one over a small ball centered at $x$ and the other over the complement, we can see that the GS gradient is an approximate Goldstein subgradient. The integral over the complement contributes to the approximation error, which can be controlled by using Lemma~\ref{lem052902}.}

Similar results have been derived under a globally Lipschitz continuity assumption on $f$; see, e.g., \cite[Theorem~2]{NesterovGS} and \cite[Theorem~3.1]{Lin2022}.
In particular, the proof of \cite[Theorem~3.1]{Lin2022} was based on an analogue of \eqref{importantintegral} for globally Lipschitz continuous $f$.
We would also like to point out that the representation~\eqref{importantintegral} can be seen as a variant of general results on convolution and differentiation such as \cite[section 4.2.5]{Vladimirov2002} and \cite[Lemma 9.1]{Brezis2011}.
Here we include an elementary proof {\color{revise}of \eqref{importantintegral}} to highlight the role of polynomial boundedness of the subdifferential.

\begin{theorem}[{{GS gradient as approximate Goldstein $\delta$-subgradient}}]\label{THBD01}
 Let $f\in \spb$ with parameters ${\rm R}_1$, ${\rm R}_2$ and $m$ as in \eqref{definitioneq01}. Let  $\nabla f_{\sigma}$ and $\partial_{G}^{\delta}f$ be given in \eqref{GSgradient} and \eqref{def_Gsubdiff}, respectively. Then the following hold.
\begin{enumerate}[label={\rm (\roman*)}]
  \item\label{THBD01:i} For every {\color{revise}$x\in \R^{d}$}, it holds that
  \begin{equation}\label{importantintegral}
  \nabla f_{\sigma}(x) = \mathbb{E}_{u\sim\mathcal{N}(0,I)}\left[\nabla f(x+\sigma u)\cdot\mathds{1}_{\mathfrak{D}_{\sigma}}(u)\right],
  \end{equation}
  where $\mathfrak{D}_\sigma= \{u\in \R^d:\; f \mbox{ is differentiable at } x + \sigma u\}$.

  \item\label{THBD01:ii} For every $\delta>0$ and $\varepsilon>0$, it holds that
\[
\nabla f_{\sigma}(x)\in\partial_{G}^{\delta}f(x)+{\color{revise}(1+\|x\|^{m})}\varepsilon\cdot\mathbb{B}\ \ \ \ \ \ \forall\, \sigma\in(0,\bar{\sigma}]  {\color{revise}\ \ {and}\ \ \forall\, x\in \R^d,}
\]
where
\begin{align}
\bar{\sigma}&=\min\bigg\{\bigg[\frac{\varepsilon}{2^{m+1}{\rm R}_{1}(m+d)^{\frac{m}{2}}}\bigg]^{\frac{1}{m}}, 1, \frac{\delta}{H}\bigg\},\label{defsigma}\\
  H&=\big[{\color{revise}-d\cdot W_{-1}(-\eta_{1}^{\frac{2}{d}}/(2\pi e))}\big]^{\frac{1}{2}},\label{defH} \\
  \eta_{1}&=\min\{\varepsilon\,{\cal P}^{-1}, (2\pi)^\frac{d}2-{\color{revise}0.5}\},
   \label{eta1}\\
   {\cal P} & = {\color{revise}4{\rm R_2}}+2^{m+1}{\rm R_1}(m+d)^{\frac{m}{2}} ,\label{PPP}
\end{align}
and $W_{-1}$ is the negative real branch of the Lambert $W$ function.\footnote{Since $\eta_1\le (2\pi)^\frac{d}2-\frac12$, we have $\eta_1^{2/d}/(2\pi e)< 1/e$ and hence $W_{-1}(-\eta_1^{2/d}/(2\pi e)) < -1$. Thus, $H\in (\sqrt{d},\infty)$.\label{footnote}}
\end{enumerate}
\end{theorem}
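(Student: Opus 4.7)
For part~\ref{THBD01:i}, I would start from the formula $\nabla f_\sigma(x)=\sigma^{-1}\mathbb{E}_{u\sim\mathcal{N}(0,I)}[f(x+\sigma u)u]$ in Theorem~\ref{lem053101} and apply coordinatewise integration by parts. For each $i\in\{1,\dots,d\}$, Fubini isolates the one-dimensional integral $\int_\R f(x+\sigma u)u_i e^{-u_i^2/2}du_i$. Since $f$ is locally Lipschitz, for almost every fixed choice of the remaining variables the map $u_i\mapsto f(x+\sigma u)$ is absolutely continuous on every compact interval, so classical one-dimensional integration by parts (using $u_i e^{-u_i^2/2}=-\frac{d}{du_i}e^{-u_i^2/2}$) yields $\sigma\int_\R \partial_i f(x+\sigma u)e^{-u_i^2/2}du_i$. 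The boundary terms vanish because Lemma~\ref{prop:useful} gives at most polynomial growth of $|f|$ along any line, which is dominated by the super-exponentially decaying Gaussian factor. Reassembling via Fubini and invoking Rademacher's theorem (so that $\mathfrak{D}_\sigma$ has full Lebesgue measure and coordinate partials combine into $\nabla f$ a.e.) yields~\eqref{importantintegral}.

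For part~\ref{THBD01:ii}, using~\eqref{importantintegral} I split $\nabla f_\sigma(x)=A+B$, where $A$ (resp.\ $B$) is the expectation restricted to $\{\|u\|\le\delta/\sigma\}$ (resp.\ $\{\|u\|>\delta/\sigma\}$), and let $p=\mathbb{P}[\|u\|\le\delta/\sigma]$. For every $u\in\mathfrak{D}_\sigma$ with $\|\sigma u\|\le\delta$, the vector $\nabla f(x+\sigma u)$ lies in $\partial_C f(x+\sigma u)\subseteq\partial_{G}^{\delta}f(x)$, and a standard Hahn--Banach separation argument (using closedness, convexity and compactness of $\partial_{G}^{\delta}f(x)$) then shows that $A/p\in\partial_{G}^{\delta}f(x)$. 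Picking any $g_0\in\partial_C f(x)\subseteq\partial_{G}^{\delta}f(x)$, I define
\[
\tilde g := A+(1-p)g_0 = p\cdot(A/p)+(1-p)g_0\in\partial_{G}^{\delta}f(x),
\]
a convex combination of two elements of $\partial_{G}^{\delta}f(x)$. The error then satisfies $\|\nabla f_\sigma(x)-\tilde g\|=\|B-(1-p)g_0\|\le\|B\|+(1-p)\|g_0\|$.

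To bound $\|B\|$, I would use the estimate $\|\nabla f(x+\sigma u)\|\le R_1 2^{m-1}\|x\|^m+R_1 2^{m-1}\sigma^m\|u\|^m+R_2$ (by convexity of $\|\cdot\|^m$ when $m\ge 1$), combined with the Gaussian moment bound $\mathbb{E}[\|u\|^m]\le(m+d)^{m/2}$ from~\cite[Lemma~1]{NesterovGS}, the tail bound $1-p\le \eta_1(2\pi)^{-d/2}$ from Lemma~\ref{lem052902} (applicable since $\delta/\sigma\ge H$), and the constraints $\sigma^m\le\varepsilon/(2^{m+1}R_1(m+d)^{m/2})$ and $\eta_1\le\varepsilon/\mathcal{P}$ from the definition of $\bar\sigma$. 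A careful accounting of these three contributions yields $\|B\|\le (1+\|x\|^m)\varepsilon/2$. For $(1-p)\|g_0\|$, the SPB property at $x$ gives $\|g_0\|\le R_1\|x\|^m+R_2$, and the same tail estimate together with $\eta_1\le\varepsilon/\mathcal{P}$ yields $(1-p)\|g_0\|\le (1+\|x\|^m)\varepsilon/2$. Summing the two pieces gives the desired bound $(1+\|x\|^m)\varepsilon$. The case $m=0$ (in which $R_1=0$ and $f$ is globally Lipschitz) is already covered by~\cite[Theorem~2]{NesterovGS} and~\cite[Theorem~3.1]{Lin2022}.

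The main subtlety lies in the construction of $\tilde g$ in part~\ref{THBD01:ii}: the naive choice $\tilde g=A/p$ would incur an error term bounded only by $\|A/p\|\le R_1(\|x\|+\delta)^m+R_2$, which cannot be absorbed into $(1+\|x\|^m)\varepsilon$ when $\delta$ is large. Mixing $A/p$ with a fixed Clarke subgradient $g_0\in\partial_C f(x)$ (which enjoys the sharper bound $\|g_0\|\le R_1\|x\|^m+R_2$) is the key device that makes the residual $\|B-(1-p)g_0\|$ amenable to a clean $(1+\|x\|^m)\varepsilon$-bound that is independent of $\delta$. A secondary technicality is the rigorous justification of coordinatewise integration by parts in part~\ref{THBD01:i}, which relies on absolute continuity of locally Lipschitz functions along lines together with Rademacher's theorem.
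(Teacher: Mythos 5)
Your proposal is correct, but both halves deviate from the paper's argument in ways worth recording. For part~\ref{THBD01:i}, the paper does \emph{not} integrate by parts: it first proves (in Theorem~\ref{lem053101}, via tempered distributions) that $f_\sigma$ is differentiable with gradient $\sigma^{-1}\mathbb{E}[f(x+\sigma u)u]$, and then establishes \eqref{importantintegral} by contradiction --- setting $h_x=\mathbb{E}[\nabla f(x+\sigma u)\mathds{1}_{\mathfrak{D}_\sigma}(u)]-\nabla f_\sigma(x)$ and using the dominated convergence theorem on the difference quotient of $f_\sigma$ in the direction $h_x$ to conclude $\|h_x\|=0$. Your coordinatewise integration by parts (absolute continuity of Lipschitz functions along lines, Fubini, Rademacher, boundary terms killed by the Gaussian against the polynomial growth from Lemma~\ref{prop:useful}) is a valid and arguably more classical alternative; the paper itself notes that \eqref{importantintegral} is a variant of \cite[Lemma 9.1]{Brezis2011}-type results. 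For part~\ref{THBD01:ii}, your decomposition $\nabla f_\sigma(x)=A+B$ with $M=\delta/\sigma$ and the use of Lemma~\ref{lem052902} mirror the paper, but the element of $\partial_G^\delta f(x)$ you exhibit is the convex combination $p\cdot(A/p)+(1-p)g_0$ with a fixed $g_0\in\partial_Cf(x)$, whereas the paper compares $\nabla f_\sigma(x)$ directly against the normalized conditional mean $A/p$ and bounds $\|B\|+\tfrac{1-p}{p}\|A\|$. Your version is slightly cleaner: it avoids the division by $p$ and hence the need for the lower bound $\int_{\B_M}e^{-\|u\|^2/2}du\ge 0.5$ (the $(2\pi)^{d/2}-0.5$ cap in \eqref{eta1}) and the constraint $\sigma\le 1$, both of which the paper imposes only to control $\tfrac{1-p}{p}\|A\|$. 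Your constants check out: $(1-p)\|g_0\|\le \varepsilon\mathcal{P}^{-1}(2\pi)^{-d/2}\max\{R_1,R_2\}(1+\|x\|^m)\le \tfrac{\varepsilon}{2}(1+\|x\|^m)$ since $\mathcal{P}\ge 2\max\{R_1,R_2\}$. One remark in your closing paragraph is inaccurate, though it does not affect your proof: the ``naive'' choice $\tilde g=A/p$ does \emph{not} incur an error of order $\|A/p\|$; the error is $\|B-\tfrac{1-p}{p}A\|$, which carries the tiny tail factor $1-p$ in front of $\|A\|$ and is exactly how the paper closes the argument. So the mixing with $g_0$ is a nice simplification, not a necessity.
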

\begin{proof}
Fix any {\color{revise}$x\in \R^d$}.
From Theorem~\ref{lem053101}, the GS $f_\sigma$ and it gradient $\nabla f_\sigma$ are well-defined at $x$. For notational simplicity, for any $\sigma > 0$, let $\mathfrak{D}_\sigma=\{u\in\mathbb{R}^{d}:f \text{ is differentiable at } x+\sigma u \}$.
Then it follows from {\color{rerevise}Rademacher's theorem} that the complement $ \mathfrak{D}_\sigma^c$ has Lebesgue measure zero.

To prove \ref{THBD01:i}, note from Lemma~\ref{prop:useful} that for each $u$ and $h\in \mathbb{R}^d$
\begin{align}\label{PM051701TH3.1}
&|f(x+h+\sigma u)-f(x+\sigma u)|\leq [2^{m-1}{\rm R}_{1}\|x+\sigma u\|^{m} + 2^{m-1}{\rm R}_{1}\|h\|^m+{\rm R}_{2}]\|h\|\notag\\
&\leq(2^{2m-2}{\rm R}_{1}\|x\|^{m}+2^{2m-2}{\rm R}_{1}\sigma^{m}\|u\|^{m}+2^{m-1}{\rm R}_{1}\|h\|^{m}+{\rm R}_{2})\|h\|,
\end{align}
where the second inequality follows from the convexity of $\|\cdot\|^m$ when $m\ge1$ {\color{revise}and inequality holds when $m=0$  (in which case $R_1 = 0$) with the convention $0^{0}=1$}.

We prove \eqref{importantintegral} by contradiction. First, $\mathbb{E}_{u\sim\mathcal{N}(0,I)}[\nabla f(x+\sigma u)\cdot\mathds{1}_{\mathfrak{D}_{\sigma}}(u)]$ exists as $f$ is SPB. Suppose to the contrary $\mathbb{E}_{u\sim\mathcal{N}(0,I)}[\nabla f(x+\sigma u)\cdot\mathds{1}_{\mathfrak{D}_{\sigma}}(u)]\neq \nabla f_{\sigma}(x)$.
Define
\begin{equation*}
h_x= \mathbb{E}_{u\sim\mathcal{N}(0,I)}[\nabla f(x+\sigma u)\cdot\mathds{1}_{\mathfrak{D}_{\sigma}}(u)] - \nabla f_{\sigma}(x).
\end{equation*}
Then $h_x\neq 0$ and we have from the differentiability of $f_\sigma$ (see Theorem~\ref{lem053101}) that
\begin{align*}
0 &=\lim\limits_{t\rightarrow0}\frac{f_{\sigma}(x+th_x)-f_{\sigma}(x)-\langle\nabla f_{\sigma}(x),th_x\rangle}{\|th_x\|}\\
&=\frac{1}{(2\pi)^\frac{d}2}\lim\limits_{t\rightarrow0}\int_{\mathbb{R}^{d}}\frac{f(x+th_x+\sigma u)-f(x+\sigma u)-\langle\nabla f_{\sigma}(x),th_x\rangle}{\|th_x\|}\cdot e^{-\frac{\|u\|^{2}}{2}}du\\
&=\frac{1}{(2\pi)^\frac{d}2}\lim\limits_{t\rightarrow0}\int_{\mathfrak{D}_\sigma}\frac{f(x+th_x+\sigma u)-f(x+\sigma u)-\langle\nabla f_{\sigma}(x),th_x\rangle}{\|th_x\|}\cdot e^{-\frac{\|u\|^{2}}{2}}du\\
&\overset{\rm (a)}=\mathbb{E}_{u\sim\mathcal{N}(0,I)}\bigg[\frac{\langle\nabla f(x+\sigma u)- \nabla f_{\sigma}(x),h_x\rangle}{\|h_x\|}\cdot\mathds{1}_{\mathfrak{D}_{\sigma}}(u)\bigg]
\overset{\rm (b)}= \|h_x\|,
\end{align*}
where (a) follows from \eqref{PM051701TH3.1}, the dominated convergence theorem, and the fact that
\begin{align*}
&\lim_{t\to0}\bigg|\frac{f(x+th_x+\sigma u)-f(x+\sigma u)-\langle\nabla f_{\sigma}(x),th_x\rangle}{\|th_x\|} - \frac{\langle\nabla f(x+\sigma u)- \nabla f_{\sigma}(x),h_x\rangle}{\|h_x\|}\bigg|\\
&=\lim_{t\to 0}\bigg|\frac{f(x+th_x+\sigma u)-f(x+\sigma u)-\langle\nabla f(x+\sigma u),th_x\rangle}{\|th_x\|}\bigg| = 0,
\end{align*}
which holds thanks to the differentiability of $f$ at $x+\sigma u$ when $u \in \mathfrak{D}_\sigma$, and (b) follows from the definition of $h_x$. This contradicts the fact that $h_x \neq 0$. Thus, \eqref{importantintegral} holds.

We now prove \ref{THBD01:ii} by using the integral representation in \eqref{importantintegral} to relate $\nabla f_\sigma(x)$ to $\partial^\delta_Gf(x)$. To this end, we let $M>0$ and notice that for any $\sigma > 0$ we have
\begin{align}\label{HB051702}
\Delta_{h} &:= \mathbb{E}_{u\sim\mathcal{N}(0,I)}[\nabla f(x+\sigma u)\cdot \mathds{1}_{\mathfrak{D}_{\sigma}}(u)]\!-\!\frac{\mathbb{E}_{u\sim\mathcal{N}(0,I)}[\nabla f(x+\sigma u)\cdot \mathds{1}_{\mathfrak{D}_{\sigma}\cap \mathbb{B}_{M}}(u)]}{\mathbb{E}_{u\sim\mathcal{N}(0,I)}[\mathds{1}_{\mathfrak{D}_{\sigma}\cap \mathbb{B}_{M}}(u)]}\notag\\
&=\mathbb{E}_{u\sim\mathcal{N}(0,I)}[\nabla f(x+\sigma u)\cdot \mathds{1}_{\mathfrak{D}_{\sigma}\cap
\mathbb{B}_{M}^{c}}(u)]\notag\\
 & ~~+ \bigg(1-\frac{1}{\mathbb{E}_{u\sim\mathcal{N}(0,I)}[\mathds{1}_{\mathfrak{D}_{\sigma}\cap \mathbb{B}_{M}}(u)]}\bigg)\mathbb{E}_{u\sim\mathcal{N}(0,I)}[\nabla f(x+\sigma u)\cdot \mathds{1}_{\mathfrak{D}_{\sigma}\cap\mathbb{B}_{M}}(u)],
 \end{align}
where we recall that $\mathbb{B}_{M}=\{u\in\mathbb{R}^{d}:\|u\|\leq M\}$ and $\mathbb{B}_M^c$ is its complement.

For the first term on the second line of \eqref{HB051702}, we have
\begin{align}\label{ineq1}
&\left\|\mathbb{E}_{u\sim\mathcal{N}(0,I)}[\nabla f(x+\sigma u)\cdot \mathds{1}_{\mathfrak{D}_{\sigma}\cap\mathbb{B}_{M}^{c}}(u)]\right\|\notag\\
&\overset{\rm (a)}\le\mathbb{E}_{u\sim\mathcal{N}(0,I)}[({\rm R_{1}}\|x+\sigma u\|^{m}+{\rm R_{2}})\cdot \mathds{1}_{\mathfrak{D}_{\sigma}\cap\mathbb{B}_{M}^{c}}(u)]
\overset{\rm (b)}\leq \Xi_1 + \Xi_2
\end{align}
with
\begin{align*}
\Xi_1 &= (2^{m-1}{\rm R}_{1}{\color{revise}\|x\|^{m}}+{\rm R}_{2})\mathbb{E}_{u\sim\mathcal{N}(0,I)}[\mathds{1}_{\mathbb{B}_{M}^{c}}(u)],\\
\Xi_2 &= (2^{m-1}{\rm R}_{1}\sigma^{m})\mathbb{E}_{u\sim\mathcal{N}(0,I)}[\|u\|^{m}\cdot\mathds{1}_{\mathbb{B}_{M}^{c}}(u)],
\end{align*}
where we invoked \eqref{definitioneq01} in (a), and used the convexity of $\|\cdot\|^m$ when $m\ge1$ {\color{revise}and the fact that $\mathrm{R}_{1}=0$ when $m=0$}.
Now, observe that
\begin{equation*}
\mathbb{E}_{u\sim\mathcal{N}(0,I)}[\|u\|^{m}\cdot\mathds{1}_{\mathbb{B}_{M}^{c}}(u)]\leq\mathbb{E}_{u\sim\mathcal{N}(0,I)}[\|u\|^{m}]\leq(m+d)^{\frac{m}{2}},
\end{equation*}
where the second inequality follows from \cite[Lemma 1]{NesterovGS}.
Thus, {\color{revise}for $m\ge1$,} if we choose a finite positive $\sigma$ such that $\sigma\le \left[\frac{\varepsilon}{2^{m+1}{\rm R}_{1}(m+d)^{\frac{m}{2}}}\right]^{\frac{1}{m}}$, then
\begin{equation}\label{Xi2}
\begin{aligned}
\Xi_2 = (2^{m-1}{\rm R}_{1}\sigma^{m})\mathbb{E}_{u\sim\mathcal{N}(0,I)}[\|u\|^{m}\cdot\mathds{1}_{\mathbb{B}_{M}^{c}}(u)]
\leq{\color{revise}0.25\varepsilon\le 0.25\varepsilon(1+\|x\|^{m})}.
\end{aligned}
\end{equation}
{\color{revise}As for $m=0$, since $\mathrm{R}_{1}=0$, we conclude that for any $\sigma>0$, $\Xi_2=0\le0.25\varepsilon(1+\|x\|^{m})$. Thus, for $m\ge0$, if $\sigma \in (0,\infty)$ and $\sigma\le \left[\frac{\varepsilon}{2^{m+1}{\rm R}_{1}(m+d)^{\frac{m}{2}}}\right]^{\frac{1}{m}}$, then \eqref{Xi2} holds. }

Next, choose $\eta=\varepsilon(2\pi)^\frac{d}2\left({\color{revise}4 \max\{2^{m-1}\mathrm{R}_{1},\mathrm{R}_{2}\}}\right)^{-1}$.
Then, by setting $\nu=\eta$ in Lemma \ref{lem052902}, we see that whenever $M\geq \big[{\color{revise}-d\cdot W_{-1}}\big(-\eta^{\frac{2}{d}}/(2\pi e)\big)\big]^{\frac{1}{2}}$,
\begin{align}
\Xi_1 &= (2^{m-1}{\rm R}_{1}{\color{revise}\|x\|^{m}}+{\rm R}_{2})\mathbb{E}_{u\sim\mathcal{N}(0,I)}[\mathds{1}_{\mathbb{B}_{M}^{c}}(u)]\nonumber\\
&\leq {\color{revise} \max\{2^{m-1}\mathrm{R}_{1},\mathrm{R}_{2}\}(1+\|x\|^{m})\mathbb{E}_{u\sim\mathcal{N}(0,I)}[\mathds{1}_{\mathbb{B}_{M}^{c}}(u)]}
\le{\color{revise}0.25\varepsilon(1+\|x\|^{m})}.\label{Xi1}
\end{align}

Additionally, for the above $\eta$ and the $\eta_1$ in \eqref{eta1}, we have
      \[
       \eta_1 \le \varepsilon\left[4{\rm R_2}+2^{m+1}{\rm R_1}(m+d)^{\frac{m}{2}}\right]^{-1}\leq\eta.
      \]
Combining this conclusion with \eqref{ineq1}, \eqref{Xi2} and \eqref{Xi1}, we know that for any finite positive $\sigma\le\left[\frac{\varepsilon}{2^{m+1}{\rm R}_{1}(m+d)^{\frac{m}{2}}}\right]^{\frac{1}{m}}$ and $M\geq \big[{\color{revise}-d\cdot W_{-1}}\big(-\eta_{1}^{\frac{2}{d}}/(2\pi e)\big)\big]^{\frac{1}{2}}$ with $\eta_{1}$ as in \eqref{eta1}, we have
\begin{equation}\label{051707}
\|\mathbb{E}_{u\sim\mathcal{N}(0,I)}[\nabla f(x+\sigma u)\cdot\mathds{1}_{\mathfrak{D}_{\sigma}\cap\mathbb{B}^{c}_{M}}(u)]\|\leq {\color{revise}0.5\varepsilon(1+\|x\|^{m})}.
\end{equation}

We next estimate the second term on the second line of \eqref{HB051702}. We first notice that
$\int_{\mathbb{B}_{M}}{\color{revise}e^{-\|u\|^{2}/2}}du=(2\pi)^\frac{d}2-\int_{\B_M^c}{\color{revise}e^{-\|u\|^{2}/2}}du$. So, if $\int_{\B_M^c}{\color{revise}e^{-\|u\|^{2}/2}}du\leq (2\pi)^\frac{d}2-{\color{revise}0.5}$, then $\int_{\mathbb{B}_{M}}{\color{revise}e^{-\|u\|^{2}/2}}du\geq {\color{revise}0.5}$.
In view of Lemma~\ref{lem052902} and the definition of $\eta_1$, this happens when we choose
$M\geq \big[{\color{revise}-d\cdot W_{-1}}\big(-\eta_{1}^{\frac{2}{d}}/(2\pi e)\big)\big]^{\frac{1}{2}}$, since $\eta_1\le (2\pi)^\frac{d}2-{\color{revise}0.5}$.

On top of this choice of $M$, if we further choose $\sigma \le 1$, then the term in the last line of \eqref{HB051702} can be upper bounded as follows:
\begin{align}
&\bigg\|\bigg(1-\frac{1}{\mathbb{E}_{u\sim\mathcal{N}(0,I)}[\mathds{1}_{\mathbb{B}_{M}}(u)]}\bigg)\mathbb{E}_{u\sim\mathcal{N}(0,I)}[\nabla f(x+\sigma u)\cdot\mathds{1}_{\mathfrak{D}_{\sigma}\cap\mathbb{B}_{M}}(u)]\bigg\|\notag\\
&\leq\frac{\widetilde K}{\mathbb{E}_{u\sim\mathcal{N}(0,I)}[\mathds{1}_{\mathbb{B}_{M}}(u)] }\mathbb{E}_{u\sim\mathcal{N}(0,I)}[\|\nabla f(x+\sigma u)\|\cdot\mathds{1}_{\mathfrak{D}_{\sigma}\cap\mathbb{B}_{M}}(u)]\notag\\
&\overset{\rm (a)}\leq2(2\pi)^\frac{d}2 \widetilde K\mathbb{E}_{u\sim\mathcal{N}(0,I)}[\|\nabla f(x+\sigma u)\|\cdot\mathds{1}_{\mathfrak{D}_{\sigma}\cap\mathbb{B}_{M}}(u)]\notag\\
&\overset{\rm (b)}\leq2(2\pi)^\frac{d}2 \widetilde K\mathbb{E}_{u\sim\mathcal{N}(0,I)}[{\rm R_{1}}\|x+\sigma u\|^{m}+{\rm R_{2}}]\notag\\
&\overset{\rm (c)}\leq2(2\pi)^\frac{d}2 \widetilde K
\left[(2^{m-1}{\rm R}_{1}{\color{revise}\|x\|^{m}}+{\rm R}_{2})+2^{m-1}{\rm R}_{1}\sigma^{m}\mathbb{E}_{u\sim\mathcal{N}(0,I)}[\|u\|^{m}]\right]\notag\\
&\overset{\rm (d)}\leq2(2\pi)^\frac{d}2\widetilde K[(2^{m-1}{\rm R}_{1}{\color{revise}\|x\|^{m}}+{\rm R}_{2})+2^{m-1}{\rm R}_{1}(m+d)^{\frac{m}{2}}]\notag\\
&{\color{revise}\ \le 2(2\pi)^\frac{d}2\widetilde K[{\rm R}_{2}+2^{m-1}{\rm R}_{1}(m+d)^{\frac{m}{2}}](1+\|x\|^{m})},\label{upperbound}
\end{align}
where $\widetilde K := \mathbb{E}_{u\sim\mathcal{N}(0,I)}[\mathds{1}_{\mathbb{B}_{M}^{c}}(u)]$, and (a) holds because $(2\pi)^\frac{d}2\mathbb{E}_{u\sim\mathcal{N}(0,I)}[\mathds{1}_{\mathbb{B}_{M}}(u)] = {\color{revise}\int_{\B_M}e^{-\|u^2\|/2}du\ge 0.5}$, (b) holds upon using \eqref{definitioneq01} and enlarging the domain of integration, (c) follows from the convexity of $\|\cdot\|^m$ when $m\ge1$ {\color{revise}and the inequality holds trivially as an equality when $m=0$ because ${\rm R}_1= 0$}, (d) follows from \cite[Lemma 1]{NesterovGS} and the choice that $\sigma \le 1$. In view of \eqref{upperbound}, we can now invoke Lemma~\ref{lem052902}
to deduce that if we choose $M\geq \big[{\color{revise}-d\cdot W_{-1}}\big(-\eta_{1}^{\frac{2}{d}}/(2\pi e)\big)\big]^{\frac{1}{2}}$ with $\eta_1$ as in \eqref{eta1} and $\sigma \le 1$, then
\begin{align}
\bigg\|\bigg(\!1-\frac{1}{\mathbb{E}_{u\sim\mathcal{N}(0,I)}[\mathds{1}_{\mathbb{B}_{M}}(u)]}\!\bigg)\mathbb{E}_{u\sim\mathcal{N}(0,I)}\!\big[\nabla f(x\!\!+\!\!\sigma u)\cdot\mathds{1}_{\mathfrak{D}_{\sigma}\cap\mathbb{B}_{M}}(u)\big]\bigg\|\!\leq\!\frac{\varepsilon}{2}{\color{revise}(1\!\!+\!\!\|x\|^{m})}.\label{051708}
\end{align}
Thus, we conclude that \eqref{051707} and \eqref{051708} will both hold as long as we choose $M \ge H$ defined as in \eqref{defH} and $\sigma \le \widetilde\sigma = \min\big\{\big[\frac{\varepsilon}{2^{m+1}{\rm R}_{1}(m+d)^{\frac{m}{2}}}\big]^{\frac{1}{m}}, 1\big\}$. Hence, we have
\begin{equation}\label{hahaha}
\|\Delta_{h}\|\le {\color{revise}\varepsilon(1+\|x\|^{m})} \ \ {\rm whenever}\ \ M \ge H\ {\rm and}\ \sigma \le \widetilde\sigma,
\end{equation}
where $\Delta_{h}$ is given in \eqref{HB051702}.

Finally, for $M =H$ and any $\sigma\le\bar{\sigma}=\min\left\{\left[\frac{\varepsilon}{2^{m+1}{\rm R}_{1}(m+d)^{\frac{m}{2}}}\right]^{\frac{1}{m}}, 1, \frac{\delta}{H}\right\}$, we have $\sigma M \le \delta$. Hence, by \eqref{def_Clarkesubdiff} and the definition of Goldstein $\delta$-subdifferential,
\begin{equation}\label{051709}
\frac{1}{\mathbb{E}_{u\sim\mathcal{N}(0,I)}[\mathds{1}_{\mathfrak{D}_{\sigma}\cap\mathbb{B}_{M}}(u)]}
\mathbb{E}_{u\sim\mathcal{N}(0,I)}[\nabla f(x+\sigma u)\cdot\mathds{1}_{\mathfrak{D}_{\sigma}\cap\mathbb{B}_{M}}(u)]\in\partial_{G}^{\delta}f(x).
\end{equation}
The desired conclusion follows from \eqref{importantintegral}, \eqref{051709}, \eqref{hahaha} and the definition of $\Delta_{h}$ in \eqref{HB051702}.
\end{proof}	
\begin{remark}[{{Simplified expressions for the choice of $\sigma$}}]\label{sdde}
We present more explicit upper bounds for $\sigma$ in Theorem~\ref{THBD01}\ref{THBD01:ii}.
Let $f\in \spb$ with parameters ${\rm R}_1$, ${\rm R}_2$ and $m$ as in \eqref{definitioneq01}. Let
  \begin{equation}\label{varepsilonchoice}
    0<\varepsilon < \min\left\{5{\rm R_2},1\right\} \ \ {\rm and}\ \ 0< \delta < 1.
  \end{equation}
To provide an estimate on the corresponding $\bar\sigma$ in \eqref{defsigma}, we define
 \begin{align}\label{defQ}
  \mathfrak{M}_{1} &= [(2\pi)^\frac{d}2-0.5]{\cal P},\ \ \mathfrak{M}_{2} \!=\! \left(\pi e/5\right)^{\frac{d}2}\!\!{\cal P},
 \end{align}
where ${\cal P}$ is given in \eqref{PPP}. Since ${\cal P}\ge 4{\rm R}_2$, we can deduce that
 \[
 \begin{aligned}
 \mathfrak{M}_{1}\geq 5{\rm R_2}\ {\rm and}\
 \mathfrak{M}_{2}\geq\left(\pi e/5\right)^{d/2}4{\rm R_2} \ge 5{\rm R_2}.
 \end{aligned}
 \]
 This means that for the $\varepsilon$ and $\delta$ chosen as in \eqref{varepsilonchoice}, we indeed have
\begin{equation}\label{083101}
0<\varepsilon<\min\left\{\mathfrak{M}_{1},\mathfrak{M}_{2},1\right\}\ \ \text{and}\ \ 0<\delta<1.
\end{equation}
Using the definitions of $\mathfrak{M}_{1}$ and $\mathfrak{M}_{2}$ above and the definition of $\eta_1$ in \eqref{eta1}, we can then deduce that\footnote{Specifically, we deduce from $\varepsilon < {\frak M}_1$ and the definition of $\eta_1$ that $\eta_1 = \varepsilon {\cal P}^{-1}$, and then from $\varepsilon < {\frak M}_2$ and the definition of $\eta_1$ that $\eta_{1}^{\frac{2}{d}}/(2\pi e)<1/10$.}
 \begin{equation}\label{eta1eta1}
\eta_{1}=\varepsilon\mathcal{P}^{-1}\ \ \text{and}\ \ 0<\eta_{1}^{\frac{2}{d}}/(2\pi e)<1/10<1/e.
 \end{equation}
Next, recall that for $0<h<\frac{1}{10}$, it holds that
\[
W_{-1}(-h)\geq \frac{e}{e-1}\ln(h)\geq\frac{e}{e-1}\frac{0.1\ln(0.1)}{h}>-\frac{1}{2h},
\]
where the first inequality follows from \cite[Eq.~(8)]{Loczi}, and the second inequality holds because $t\mapsto t \ln t$ is decreasing on $[0,0.1]$. The above display together with \eqref{eta1eta1} implies that for the $H$ given in \eqref{defH},
\begin{equation}\label{boundH}
H\leq\sqrt{d\pi e}\eta_{1}^{-\frac{1}{d}}.
\end{equation}
Finally, since $H > \sqrt{d}$ (see footnote~\ref{footnote}) and we chose $0<\delta<1$ as stated in \eqref{varepsilonchoice}, it follows from \eqref{defsigma} that $\bar{\sigma}=\min\big\{\big[\frac{\varepsilon}{2^{m+1}{\rm R}_{1}(m+d)^{\frac{m}{2}}}\big]^{\frac{1}{m}}, \frac{\delta}{H}\big\}$. Therefore, for the $\varepsilon$ and $\delta$ chosen as in \eqref{varepsilonchoice}, {\color{revise} upon combining \eqref{083101}, \eqref{eta1eta1} and \eqref{boundH} and recalling that ${\rm R}_1 = 0$ if and only if $m=0$, we have} that the inclusion in Theorem~\ref{THBD01}\ref{THBD01:ii} holds {\color{revise}whenever
\begin{equation}\label{091001}
 \sigma\leq
\begin{cases}
 \min\big\{[2^{m+1}{\rm R_1}(m+d)^{\frac{m}{2}}]^{-\frac{1}{m}},\delta\mathcal{P}^{-1/d}/\sqrt{d\pi e}\big\}\cdot\varepsilon^{\max\{\frac{1}{m},\frac{1}{d}\}}& {\color{revise}{\rm if}\ m \ge 1,} \\
 \delta\mathcal{P}^{-1/d}\varepsilon^{\frac{1}{d}}/\sqrt{d\pi e} & \text{if } m =0.
 \end{cases}
\end{equation}
}
\end{remark}
	
{\color{revise}Let $\zeta$ be an accumulation point of the set $\{\nabla f_{\sigma}(x)\}_{\sigma > 0}$ as $\sigma \to 0^+$. Since $\partial_G^\delta f(x)$ and $(1+\|x\|^m)\epsilon\cdot \mathbb{B}$ are both compact,  $\partial_G^\delta f(x) + (1+\|x\|^m)\epsilon\cdot \mathbb{B}$ is closed. Theorem~\ref{THBD01} then implies that $\zeta\in \partial_G^\delta f(x) + (1+\|x\|^m)\epsilon\cdot \mathbb{B}$.
Using the definition of Clarke subdifferential and limiting arguments, we obtain the following corollary.}
\begin{corollary}[{{GS gradient consistency}}]
Let $f\in \spb$ and let $\nabla f_{\sigma}$ and $\partial_Cf$ be given in \eqref{GSgradient} and \eqref{def_Clarkesubdiff}, respectively. Then for each $x\in \R^d$, every accumulation point of $\{\nabla f_{\sigma}(x)\}_{\sigma > 0}$ as $\sigma \to 0^+$ belongs to $\partial_{C}f(x)$.
\end{corollary}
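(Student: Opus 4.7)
The strategy is to combine Theorem~\ref{THBD01}\ref{THBD01:ii} with a double limiting argument, first letting $\varepsilon \to 0^+$ to obtain membership in the Goldstein $\delta$-subdifferential, and then letting $\delta \to 0^+$ to obtain membership in the Clarke subdifferential.

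First, fix $x \in \R^d$ and let $\zeta$ be an accumulation point of $\{\nabla f_\sigma(x)\}_{\sigma > 0}$ as $\sigma \to 0^+$, so that there exists a sequence $\sigma_k \downarrow 0$ with $\nabla f_{\sigma_k}(x) \to \zeta$. The plan is to fix any $\delta \in (0,1)$ and any $\varepsilon \in (0, \min\{5R_2, 1\})$, and note that by Remark~\ref{sdde} (in particular \eqref{091001}), for all sufficiently large $k$ we have $\sigma_k \le \bar\sigma(\delta, \varepsilon)$, so that
\[
\nabla f_{\sigma_k}(x) \in \partial_G^\delta f(x) + (1+\|x\|^m)\varepsilon \cdot \mathbb{B}.
\]
Since $\partial_G^\delta f(x)$ is compact (hence the sum above is closed), passing to the limit $k \to \infty$ gives $\zeta \in \partial_G^\delta f(x) + (1+\|x\|^m)\varepsilon \cdot \mathbb{B}$. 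Because $\varepsilon \in (0, \min\{5R_2,1\})$ was arbitrary and $x$ is fixed, taking $\varepsilon \to 0^+$ and again using compactness of $\partial_G^\delta f(x)$ yields $\zeta \in \partial_G^\delta f(x)$ for every $\delta \in (0,1)$.

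It then suffices to show that $\bigcap_{\delta > 0} \partial_G^\delta f(x) = \partial_C f(x)$. The inclusion $\supseteq$ is immediate from the definition \eqref{def_Gsubdiff}. For the reverse inclusion, pick any $\zeta \in \bigcap_{\delta > 0} \partial_G^\delta f(x)$ and choose $\delta_k \downarrow 0$. By Carath\'{e}odory's theorem applied to the convex hull in \eqref{def_Gsubdiff}, for each $k$ we can write
\[
\zeta = \sum_{j=1}^{d+1} \lambda_{k,j}\, \zeta_{k,j}, \qquad \zeta_{k,j} \in \partial_C f(y_{k,j}),\ y_{k,j} \in \B(x, \delta_k),\ \lambda_{k,j} \ge 0,\ \sum_{j=1}^{d+1}\lambda_{k,j} = 1.
\]
Since $f$ is locally Lipschitz, the Clarke subdifferential is locally bounded, so $\{\zeta_{k,j}\}_k$ is bounded for each $j$; by passing to a subsequence we may assume $\lambda_{k,j} \to \lambda_j$ and $\zeta_{k,j} \to \zeta_j$. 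Since $y_{k,j} \to x$ and the Clarke subdifferential is outer semicontinuous (a consequence of \eqref{def_Clarkesubdiff}), we obtain $\zeta_j \in \partial_C f(x)$ for each $j$. Hence $\zeta = \sum_j \lambda_j \zeta_j \in \mathrm{conv}(\partial_C f(x)) = \partial_C f(x)$, which gives the desired equality and completes the proof.

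The only nontrivial step is the identification $\bigcap_{\delta > 0} \partial_G^\delta f(x) = \partial_C f(x)$; the rest is a routine double passage to the limit in Theorem~\ref{THBD01}\ref{THBD01:ii}. One minor point to handle carefully is that Theorem~\ref{THBD01}\ref{THBD01:ii} is stated with an error term proportional to $(1+\|x\|^m)$; however, since $x$ is fixed throughout, this factor is merely a constant and the argument $\varepsilon \to 0^+$ still forces the error to vanish.
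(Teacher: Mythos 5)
Your proof is correct and follows essentially the same route the paper sketches: apply Theorem~\ref{THBD01}\ref{THBD01:ii} together with closedness of $\partial_G^\delta f(x)+(1+\|x\|^m)\varepsilon\cdot\mathbb{B}$ to get $\zeta\in\partial_G^\delta f(x)$ for every $\delta>0$, then pass to the limit $\delta\to 0^+$ to land in $\partial_C f(x)$; your Carath\'{e}odory argument is a correct and welcome fleshing-out of the paper's ``limiting arguments.'' The only cosmetic remark is that the outer semicontinuity of $\partial_C f$ is most cleanly cited from \cite[Proposition~2.1.5]{Clarke1983} rather than derived on the spot from \eqref{def_Clarkesubdiff}.
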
		

\section{GS-based algorithms for minimizing SPB functions}\label{sec4}

In this section, we consider the optimization problem
\begin{equation}\label{problem}
		\begin{array}{rl}
			\displaystyle{\min_{x\in \mathbb{R}^d}} & f(x)\\
			{\rm s.t.}& x\in {\color{revise} \Omega},
		\end{array}		
\end{equation}	
where ${\color{revise} \Omega}\subseteq \mathbb{R}^d$ is a closed convex set with an easy-to-compute projection, and  $f$ is an SPB function that can only be accessed through its zeroth-order oracle (i.e., a routine for computing the function value at any prescribed point). Problem~\eqref{problem} therefore falls into the category of zeroth-order (or derivative-free) optimization problems; see, e.g., \cite{ConnSchVic09,Larson2019} and references therein for classical works and recent developments on zeroth-order optimization.

From Theorem~\ref{lem053101}, for any $x\in \mathbb{R}^d$ and $\sigma>0$, both $ f(x + \sigma u)u/\sigma$ and $ (f(x + \sigma u) - f(x))u/\sigma$ are unbiased estimators of the GS gradient $\nabla f_\sigma(x)$, where $u$ is the standard Gaussian random vector. If the smoothing parameter $\sigma $ is small, they can serve as random ascent directions for $f$ at $x$. Observing that they can be computed with one or two evaluations of $f$, it is then {\color{rerevise}tempting} to develop zeroth-order algorithms based on these gradient estimators. Indeed, there is a wealth of literature on such zeroth-order algorithms; see, e.g, \cite{NesterovGS, Maggiar2018, Bala2022, Berahas2022, Jongeneel21}. We refer to them as GS-based algorithms.
Most existing works rely on the global Lipschitz continuity of $\nabla f_\sigma$, which not only offers an obvious choice of stepsize but also greatly facilitates the convergence analysis.
One technical novelty in this paper lies in the convergence analysis of GS-based zeroth-order algorithms for SPB functions that do not possess a globally Lipschitz GS gradient.

A core idea for the design of our GS-based algorithms is as follows. For any SPB $f$, Theorem~\ref{prop:gsgradient}\ref{prop:gsgradient_ii} asserts that $\nabla f_\sigma$ is locally Lipschitz with a polynomially bounded Lipschitz modulus of order $O(\|x\|^m + 1)$ for some {\color{revise}$m\ge0$}. This naturally suggests the adaptive stepsize that scales as $(\|x\|^m + 1)^{-1}$. Based on this observation, we will develop several algorithms for different subclasses of problem~\eqref{problem} in subsequent subsections.

{\color{revise} We will develop algorithms for problem~\eqref{problem} under two different settings: the constrained convex setting (where the objective function $f$ is convex) and the unconstrained non-convex setting (where the feasible region $\Omega = \mathbb{R}^d$).
Before that,} we first establish some auxiliary lemmas, which will be useful for the algorithmic analysis.

\subsection{Auxiliary lemmas for complexity analysis}\label{sec41}
{\color{revise}

A key instrument for convergence analysis of optimization algorithms is the descent lemma, which is often proved for functions with globally Lipschitz gradients; see, e.g., \cite[Theorem~2.1.5]{Nesterov04}. We present a descent lemma of $f_\sigma$ for SPB functions $f$, whose gradients $\nabla f_\sigma$ are not necessarily globally Lipschitz.
\begin{lemma}[Descent lemma]\label{lem120901}
Let $f\in \spb$ with parameters ${\rm R}_1$, ${\rm R}_2$ and $m$ as in \eqref{definitioneq01}, $f_\sigma$ be defined in Definition~\ref{def_GSfunction}, and $\mathcal{A}$, $\mathcal{B}$, $\mathcal{C}$ be given in \eqref{eq102801}. Then
\begin{align*}
  f_{\sigma}(x)-f_{\sigma}(y)\!\le\! \langle\nabla f_{\sigma}(y),x-y\rangle \!+\! \bigg[\frac{\mathcal{A}+\mathcal{B}\|y\|^{m}}{2}\!+\!\frac{\mathcal{C}\|x-y\|^{m}}{m+2}\bigg]\|x-y\|^{2}\ \ \ \ \forall x,y\in \R^d.
\end{align*}
\end{lemma}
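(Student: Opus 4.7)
The plan is to mimic the classical derivation of the descent lemma from a Lipschitz-gradient bound, namely, the fundamental theorem of calculus along the segment $[y,x]$ combined with the quantitative gradient estimate from Theorem~\ref{prop:gsgradient}\ref{prop:gsgradient_ii}, and then to integrate the resulting polynomial bound explicitly.

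First I would use the fact that, by Theorem~\ref{lem053101} and Theorem~\ref{prop:gsgradient}\ref{prop:gsgradient_ii}, the function $f_\sigma$ is continuously differentiable on $\R^d$ and its gradient is locally Lipschitz. Consequently, $f_\sigma$ is absolutely continuous along the segment joining $y$ and $x$, and the fundamental theorem of calculus gives
\begin{equation*}
f_{\sigma}(x)-f_{\sigma}(y)=\int_{0}^{1}\langle\nabla f_{\sigma}(y+t(x-y)),x-y\rangle\,dt.
\end{equation*}
Subtracting $\langle\nabla f_{\sigma}(y),x-y\rangle=\int_0^1\langle\nabla f_\sigma(y),x-y\rangle\,dt$ from both sides and applying Cauchy--Schwarz yields
\begin{equation*}
f_{\sigma}(x)-f_{\sigma}(y)-\langle\nabla f_{\sigma}(y),x-y\rangle\le\int_{0}^{1}\|\nabla f_{\sigma}(y+t(x-y))-\nabla f_{\sigma}(y)\|\cdot\|x-y\|\,dt.
\end{equation*}

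The key step, and the one that requires a small bit of care, is to invoke Theorem~\ref{prop:gsgradient}\ref{prop:gsgradient_ii} in the ``right orientation'', that is, with $y$ playing the role of the first argument in \eqref{eq102801} and $y+t(x-y)$ playing the role of the second argument. This substitution produces the bound
\begin{equation*}
\|\nabla f_{\sigma}(y)-\nabla f_{\sigma}(y+t(x-y))\|\le(\mathcal{A}+\mathcal{B}\|y\|^{m}+\mathcal{C}\,t^{m}\|x-y\|^{m})\cdot t\|x-y\|,
\end{equation*}
which replaces the awkward quantity $\|y+t(x-y)\|^m$ by the clean quantity $\|y\|^m$. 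Without this reorientation, one would be left with a bound involving $\|y+t(x-y)\|^m$, which would then need to be expanded via the convexity of $\|\cdot\|^m$, producing spurious constants and a worse-looking inequality; this is the only subtlety in the proof.

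Finally, I would substitute the bound above into the integral and evaluate the two elementary integrals $\int_0^1 t\,dt=\tfrac12$ and $\int_0^1 t^{m+1}\,dt=\tfrac{1}{m+2}$. Collecting terms gives
\begin{equation*}
f_{\sigma}(x)-f_{\sigma}(y)-\langle\nabla f_{\sigma}(y),x-y\rangle\le\left[\frac{\mathcal{A}+\mathcal{B}\|y\|^{m}}{2}+\frac{\mathcal{C}\|x-y\|^{m}}{m+2}\right]\|x-y\|^{2},
\end{equation*}
which is exactly the claimed descent inequality. The whole argument is a one-page computation once the orientation of the gradient estimate is set correctly.
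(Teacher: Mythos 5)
Your proposal is correct and follows essentially the same route as the paper's proof: the fundamental theorem of calculus along the segment, Cauchy--Schwarz, the gradient estimate \eqref{eq102801} applied with $y$ as the first argument and $y+t(x-y)$ as the second (so that $\mathcal{B}$ multiplies $\|y\|^m$ rather than $\|y+t(x-y)\|^m$), and the elementary integrals $\int_0^1 t\,dt=\tfrac12$ and $\int_0^1 t^{m+1}\,dt=\tfrac1{m+2}$. The ``orientation'' point you flag is exactly the step the paper takes in its inequality (a), so there is nothing to add.
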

\begin{proof}
For any $x$, $y\in\R^d$, we have
\begin{align*}
&f_{\sigma}(x)-f_{\sigma}(y)- \langle\nabla f_{\sigma}(y),x-y\rangle=\int_{0}^{1}\langle\nabla f_{\sigma}(y+t(x-y))-\nabla f_{\sigma}(y),x-y\rangle dt\\
&\le\int_{0}^{1}\|\nabla f_{\sigma}(y+t(x-y))-\nabla f_{\sigma}(y)\| dt\cdot\|x-y\|\\
&\overset{\rm (a)}\le\int_{0}^{1}[\mathcal{A}\!+\!\mathcal{B}\|y\|^{m}\!+\!\mathcal{C}\|t(x\!-\!y)\|^{m}]t dt\cdot\|x\!-\!y\|^{2}\!=\!\bigg[\frac{\mathcal{A}+\mathcal{B}\|y\|^{m}}{2}\!+\!\frac{\mathcal{C}\|x\!-\!y\|^{m}}{m+2}\bigg]\|x\!-\!y\|^{2},
\end{align*}
where (a) follows from Theorem~\ref{prop:gsgradient}\ref{prop:gsgradient_ii}.
\end{proof}

We remark that another descent lemma for functions without a Lipschitz gradient has been formulated and studied in a very recent work~\cite{MiKhWaDeGo24} (see \cite[Definition~2.1]{MiKhWaDeGo24}) based on the notion of directional smoothness and utilized to analyze gradient descent. Using their descent lemma, they further proposed an adaptive stepsize for gradient descent, which is implicitly defined by a nonlinear equation involving the current and \emph{next} iterates and requires a root-finding procedure to compute.
In contrast, the adaptive stepsize derived from Lemma~\ref{lem120901} (see Theorems~\ref{thm:complexity_convex} and \ref{HBcomplexity01} below) only depends on the current iterate and is given by an \emph{explicit} formula.

The next lemma quantifies the approximation error of $f_\sigma$.
\begin{lemma}\label{lem121001}
  Let $f\in \spb$ with parameters ${\rm R}_1$, ${\rm R}_2$ and $m$ as in \eqref{definitioneq01} and let $f_\sigma$ be defined in Definition~\ref{def_GSfunction}. Then it holds that
  \begin{equation*}
    |f_{\sigma}(x)-f(x)| \le \mathcal{M}(x)\cdot\sigma\ \ \ \ \ \ \forall x\in \R^d,
  \end{equation*}
  where $\mathcal{M}:\R^d\to \R_+$ is the function
  \begin{equation*}
  \mathcal{M}(x):=(2^{m-1}{\rm R}_{1}\|x\|^{m}+{\rm R}_{2})\sqrt{d}+2^{m-1}{\rm R}_{1}\sigma^{m}(m+1+d)^{\frac{m+1}{2}}.
  \end{equation*}
\end{lemma}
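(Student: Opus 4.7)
The plan is to bound $|f_\sigma(x)-f(x)|$ by a direct expectation argument, invoking the pointwise Lipschitz-type estimate of Lemma~\ref{prop:useful} together with standard moment bounds on the Gaussian norm.

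First, starting from Definition~\ref{def_GSfunction}, I would write
\[
|f_\sigma(x)-f(x)| = \bigl|\mathbb{E}_{u\sim\mathcal{N}(0,I)}[f(x+\sigma u)-f(x)]\bigr| \le \mathbb{E}_{u\sim\mathcal{N}(0,I)}[|f(x+\sigma u)-f(x)|],
\]
by Jensen's inequality. Next, apply Lemma~\ref{prop:useful} with the pair $(x, x+\sigma u)$ to get, pointwise in $u$,
\[
|f(x+\sigma u)-f(x)| \le \bigl(2^{m-1}\mathrm{R}_1\|x\|^m + 2^{m-1}\mathrm{R}_1\sigma^m\|u\|^m + \mathrm{R}_2\bigr)\,\sigma\|u\|.
\]

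Taking expectations and distributing the integral across the three terms splits the right-hand side into a piece proportional to $\sigma\,\mathbb{E}\|u\|$ and a piece proportional to $\sigma^{m+1}\mathbb{E}\|u\|^{m+1}$. For the first, Jensen gives $\mathbb{E}\|u\| \le (\mathbb{E}\|u\|^2)^{1/2} = \sqrt{d}$; for the second, I invoke \cite[Lemma 1]{NesterovGS} (already used twice in the proof of Theorem~\ref{prop:gsgradient}) to obtain $\mathbb{E}\|u\|^{m+1}\le (m+1+d)^{(m+1)/2}$. Collecting the resulting terms yields
\[
|f_\sigma(x)-f(x)| \le \sigma\cdot\bigl[(2^{m-1}\mathrm{R}_1\|x\|^m+\mathrm{R}_2)\sqrt{d} + 2^{m-1}\mathrm{R}_1\sigma^m(m+1+d)^{(m+1)/2}\bigr] = \sigma\,\mathcal{M}(x),
\]
which is exactly the claim.

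There is essentially no obstacle here; the only minor bookkeeping is the $m=0$ case, where $\mathrm{R}_1=0$ by the convention in Definition~\ref{def:polybd} so the estimate reduces to $|f_\sigma(x)-f(x)|\le \mathrm{R}_2\sqrt{d}\,\sigma$ (with the convention $0^0=1$ used only to write $\mathcal{M}$ uniformly), and the application of Lemma~\ref{prop:useful} still goes through as noted in its proof.
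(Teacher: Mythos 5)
Your proof is correct and follows essentially the same route as the paper: bound $|f_\sigma(x)-f(x)|$ by $\mathbb{E}[|f(x+\sigma u)-f(x)|]$, apply Lemma~\ref{prop:useful} pointwise with displacement $\sigma u$, and then use $\mathbb{E}\|u\|\le\sqrt{d}$ together with \cite[Lemma~1]{NesterovGS} for the $(m+1)$-th moment. The bookkeeping for $m=0$ is also consistent with the paper's conventions.
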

\begin{proof}
Notice that for all $x\in \R^d$, we have
\begin{align*}\label{052219}
&|f_{\sigma}(x)-f(x)|\leq\mathbb{E}_{u\sim\mathcal{N}(0,I)}[|f(x+\sigma u)-f(x)|]\\
&\leq\mathbb{E}_{u\sim\mathcal{N}(0,I)}[(2^{m-1}{\rm R}_{1}\|x\|^{m}+2^{m-1}{\rm R}_{1}\sigma^{m}\|u\|^{m}+{\rm R}_{2})\cdot\sigma\|u\|]\\
&\leq \big[(2^{m-1}{\rm R}_{1}\|x\|^{m}+{\rm R}_{2})\sqrt{d}+2^{m-1}{\rm R}_{1}\sigma^{m}(m+1+d)^{\frac{m+1}{2}}\big]\cdot\sigma=\mathcal{M}(x)\cdot\sigma,
\end{align*}
where the second inequality follows from Lemma~\ref{prop:useful} and the last inequality follows from \cite[Lemma~1]{NesterovGS}.
\end{proof}

The next two lemmas concern the random vector $\left(\frac{f(x+\sigma u) - f(x)}{\sigma}\right)u$ with $u\sim {\cal N}(0,I)$.
 \begin{lemma}\label{lem112901}
  Let $f\in \spb$ with parameters ${\rm R}_1$, ${\rm R}_2$ and $m$ as in \eqref{definitioneq01}, $x\in \R^d$, $\sigma > 0$ and $p$ be a nonnegative integer. Define $F(u)= \frac1\sigma [f(x + \sigma u)-f(x)]u$. Then 
  \begin{align}\label{eq112804_old}
\mathbb{E}_{u\sim {\cal N}(0,I)}\big[\|F(u)\|^{p}\big]
&{\color{revise}\le {\cal H}_{(p)} (\|x\|^{mp} + 1).}
\end{align}
{\color{revise}where ${\cal H}_{(\cdot)}$ is the function such that ${\cal H}_{(0)}:= 1$, and when $p \ge 1$,
\[
{\cal H}_{(p)}\!:= 3^{p-1}\!\max\{2^{(m-1)p}\mathrm{R}_{1}^{p}(2p+d)^{p},
\mathrm{R}_{2}^{p}(2p+d)^{p}+2^{(m-1)p}\mathrm{R}_{1}^{p}\sigma^{mp}[(m+2)p+d]^{\frac{(m+2)p}2}\}.
\]
}
\vspace{-0.5 cm}
\end{lemma}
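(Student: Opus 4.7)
The plan is to combine the pointwise Lipschitz-type estimate from Lemma~\ref{prop:useful} with standard Gaussian moment bounds. First I would note that, by the very definition of $F$,
\begin{equation*}
\|F(u)\| = \sigma^{-1}|f(x+\sigma u)-f(x)|\cdot\|u\|,
\end{equation*}
and apply Lemma~\ref{prop:useful} with $y=x+\sigma u$ to bound $|f(x+\sigma u)-f(x)|$ by $\bigl(2^{m-1}{\rm R}_1\|x\|^m + 2^{m-1}{\rm R}_1\sigma^m\|u\|^m + {\rm R}_2\bigr)\sigma\|u\|$. The factor $\sigma$ cancels against $\sigma^{-1}$ and an extra $\|u\|$ is produced, yielding the pointwise estimate
\begin{equation*}
\|F(u)\| \le \bigl(2^{m-1}{\rm R}_1\|x\|^m + 2^{m-1}{\rm R}_1\sigma^m\|u\|^m + {\rm R}_2\bigr)\|u\|^2.
\end{equation*}

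For $p\ge 1$, I would raise both sides to the $p$-th power and invoke the power-mean inequality $(a_1+a_2+a_3)^p\le 3^{p-1}(a_1^p+a_2^p+a_3^p)$ to split the right-hand side into three non-negative summands proportional to $\|x\|^{mp}\|u\|^{2p}$, $\sigma^{mp}\|u\|^{(m+2)p}$, and $\|u\|^{2p}$ respectively. Taking expectation against $u\sim\mathcal{N}(0,I)$ and bounding the Gaussian moments via \cite[Lemma~1]{NesterovGS} as $\mathbb{E}[\|u\|^{2p}]\le(2p+d)^p$ and $\mathbb{E}[\|u\|^{(m+2)p}]\le [(m+2)p+d]^{(m+2)p/2}$ produces a bound of the form $A\|x\|^{mp}+B$ with
\[
A = 3^{p-1}2^{(m-1)p}{\rm R}_1^p(2p+d)^p,\quad B = 3^{p-1}\bigl(2^{(m-1)p}{\rm R}_1^p\sigma^{mp}[(m+2)p+d]^{\frac{(m+2)p}2} + {\rm R}_2^p(2p+d)^p\bigr).
\]
Using $A\|x\|^{mp}+B\le\max\{A,B\}(\|x\|^{mp}+1)$ and identifying ${\cal H}_{(p)}=\max\{A,B\}$ then matches the formula in the lemma statement exactly. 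The edge case $p=0$ is trivial because $\|F(u)\|^0\equiv 1 = {\cal H}_{(0)} \le 1\cdot(\|x\|^0+1)$, and the case $m=0$ (for which ${\rm R}_1=0$) is handled by the convention $0^0=1$ that is already in force from Lemma~\ref{prop:useful}.

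There is no substantive obstacle; the argument is a direct chain of estimates. The only subtlety worth flagging is the appearance of $\|u\|^2$ (rather than $\|u\|$) in the pointwise bound, arising because the definition of $F$ already carries one factor of $u$ while Lemma~\ref{prop:useful} supplies a second via $\|y-x\|=\sigma\|u\|$. This doubling is precisely what promotes the $\|u\|$-moments from orders $p$ and $(m+1)p$ to orders $2p$ and $(m+2)p$, and hence produces the specific constants $(2p+d)^p$ and $[(m+2)p+d]^{(m+2)p/2}$ inside ${\cal H}_{(p)}$; getting this accounting right is the one place where care is needed.
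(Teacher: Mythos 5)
Your proposal is correct and follows essentially the same route as the paper's proof: the pointwise bound from Lemma~\ref{prop:useful}, the power-mean inequality $(a_1+a_2+a_3)^p\le 3^{p-1}(a_1^p+a_2^p+a_3^p)$, and the Gaussian moment bounds of \cite[Lemma~1]{NesterovGS}, with the trivial $p=0$ and $m=0$ cases handled the same way. Your explicit identification of ${\cal H}_{(p)}$ as $\max\{A,B\}$ is exactly how the paper's final constant arises.
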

\begin{proof}
Note that \eqref{eq112804_old} holds trivially for $p = 0$. While for $p \ge 1$, from Lemma \ref{prop:useful}, for any $u\in \R^d$, we have
\[
|f(x+\sigma u)-f(x)|\le(2^{m-1}\mathrm{R}_{1}\|x\|^{m}+2^{m-1}\mathrm{R}_{1}\|\sigma u\|^{m}+\mathrm{R}_{2})\|\sigma u\|.
\]
It follows from the convexity of $\|\cdot\|^p$ that
\begin{align*}
 &\bigg|\frac{f(x+\sigma u)-f(x)}{\sigma}\bigg|^{p}\|u\|^{p}\\
 &\le\big[3^{p-1}2^{(m-1)p}\mathrm{R}_{1}^{p}\|x\|^{mp}+3^{p-1}\mathrm{R}_{2}^{p}\big]\|u\|^{2p}+3^{p-1}2^{(m-1)p}\mathrm{R}_{1}^{p}\sigma^{mp}\|u\|^{(m+2)p}.
\end{align*}
Taking the expectation on both sides of the above display with respect to $u\sim {\cal N}(0,I)$ and invoking \cite[Lemma~1]{NesterovGS} for upper bounding moments of the form $\mathbb{E}_{u\sim {\cal N}(0,I)}[\|u\|^k]$ for $k \ge 1$, we see that $\mathbb{E}_{u\sim {\cal N}(0,I)}\big[\|F(u)\|^{p}\big]$ can be bounded from above by
\[
\big[3^{p-1}2^{(m-1)p}\mathrm{R}_{1}^{p}\|x\|^{mp}+3^{p-1}\mathrm{R}_{2}^{p}\big](2p+d)^{p}+3^{p-1}2^{(m-1)p}\mathrm{R}_{1}^{p}\sigma^{mp}[(m+2)p+d]^{(m+2)p/2}.
\]
The relation \eqref{eq112804_old} now follows immediately.
\end{proof}
}

{\color{revise}
\begin{lemma}\label{lem082701}
Let $x$ be a random vector and $g$ be a nonnegative, lower semicontinuous function. Assume that
\begin{align}\label{conditionss}
 \mathbb{E}_{x}[\|x\|^{2}]\le\beta_{c}\ \ \text{and}\ \ \mathbb{E}_{x}\left[\frac{g(x)}{1+\|x\|^{n}}\right]\le\alpha_{c}
\end{align}
for some integer $n\ge1$ and positive numbers $\alpha_{c}$ and $\beta_{c}$. Then
\begin{align*}
 \mathbb{E}_{x}\left[g(x)^{\frac{1}{2\lceil n/2\rceil}}\right]\le\left(1+\sqrt{\beta_{c}}\right)(2\alpha_{c})^{\frac{1}{2\lceil n/2\rceil}}.
\end{align*}
\end{lemma}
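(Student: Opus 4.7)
The plan is to prove the inequality via the Cauchy--Schwarz inequality after an appropriate factorization. Let $q := 2\lceil n/2 \rceil$, which is an even integer satisfying $q \ge n$ (and in particular $q \ge 2$). The starting point is the identity
\[
g(x)^{1/q} = \bigl(g(x)/(1+\|x\|^{n})\bigr)^{1/q}(1+\|x\|^{n})^{1/q}.
\]

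The first step is to control $(1+\|x\|^{n})^{1/q}$ by means of the elementary bound $1+\|x\|^{n}\le 2\max(1,\|x\|^{n})$, which upon raising to the $1/q$-th power gives $(1+\|x\|^{n})^{1/q}\le 2^{1/q}\max(1,\|x\|^{n/q})$. This is what produces the factor $2^{1/q}$ appearing on the right-hand side of the target. Applying the Cauchy--Schwarz inequality to $U = \max(1,\|x\|^{n/q})$ and $V = (g(x)/(1+\|x\|^{n}))^{1/q}$ then yields
\[
\mathbb{E}_{x}[g(x)^{1/q}]\le 2^{1/q}\sqrt{\mathbb{E}_{x}\bigl[\max(1,\|x\|^{2n/q})\bigr]\cdot \mathbb{E}_{x}\bigl[(g(x)/(1+\|x\|^{n}))^{2/q}\bigr]}.
\]

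Next I would bound the two expectations on the right. For the second one, since $2/q\le 1$, Jensen's inequality applied to the concave map $t\mapsto t^{2/q}$ together with \eqref{conditionss} gives an upper bound of $\alpha_{c}^{2/q}$. For the first one, the observation that $q\ge n$ implies $2n/q\le 2$, from which $\max(1,\|x\|^{2n/q})\le 1+\|x\|^{2}$ (this is easy to verify separately on $\|x\|\le 1$ and $\|x\|\ge 1$), so the expectation is at most $1+\beta_{c}$. Assembling these estimates and invoking the elementary inequality $\sqrt{1+\beta_{c}}\le 1+\sqrt{\beta_{c}}$ yields the desired conclusion.

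The main obstacle is really the very first step: substituting $2\max(1,\|x\|^{n})$ for $1+\|x\|^{n}$ (rather than invoking, for instance, the subadditivity of $t\mapsto t^{1/q}$ on $1+\|x\|^n$) is what produces the factor $2^{1/q}$ in the right place, and simultaneously ensures that the two inner expectations arising from Cauchy--Schwarz are both controllable via the two hypotheses in \eqref{conditionss}.
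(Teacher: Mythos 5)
Your proof is correct. It follows the same overall strategy as the paper's --- one application of Cauchy--Schwarz to a weighted factorization of $g(x)^{1/q}$, one application of Jensen to bring the hypothesis $\mathbb{E}_x[g(x)/(1+\|x\|^n)]\le\alpha_c$ into play, and an elementary comparison to absorb the factor $2$ --- but the decomposition differs. The paper writes $g(x)^{1/(2N)}=(1+\|x\|)\cdot\frac{g(x)^{1/(2N)}}{1+\|x\|}$ with $N=\lceil n/2\rceil$, bounds $\mathbb{E}_x[(1+\|x\|)^2]\le(1+\sqrt{\beta_c})^2$ directly, and extracts the factor $2$ from the comparison $1+\|x\|^n\le 2(1+\|x\|^2)^{N}$ after applying Jensen with the \emph{convex} power $t\mapsto t^{N}$ to $\frac{g(x)^{1/N}}{1+\|x\|^2}$. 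You instead keep the ratio $g(x)/(1+\|x\|^n)$ intact, apply Jensen with the \emph{concave} power $t\mapsto t^{2/q}$ to it in one shot, and extract the factor $2$ from $1+\|x\|^n\le 2\max(1,\|x\|^n)$; the price is the slightly different weight $\max(1,\|x\|^{n/q})$, whose second moment you control by $1+\beta_c$ and the bound $\sqrt{1+\beta_c}\le 1+\sqrt{\beta_c}$ (the paper gets the same constant from $\mathbb{E}_x[\|x\|]\le\sqrt{\beta_c}$). All steps in your version check out --- in particular $2n/q\le 2$ and $2/q\le 1$ hold because $q=2\lceil n/2\rceil\ge\max(n,2)$ --- so the two arguments are of essentially equal length and yield the identical constant; yours is arguably a touch cleaner in that it avoids the intermediate inequality $1+\|x\|^n\le 2(1+\|x\|^2)^{N}$.
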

\begin{proof}
Notice that
\begin{align}\label{eq082601}
&\mathbb{E}_{x}\Bigg[\Bigg(\frac{g(x)^{\frac{1}{2\lceil n/2\rceil}}}{1+\|x\|}\Bigg)^{2}\Bigg]
\overset{\rm (a)}\le\mathbb{E}_{x}\Bigg[\frac{g(x)^{\frac{1}{\lceil n/2\rceil}}}{1+\|x\|^{2}}\Bigg]\overset{\rm (b)}\le\Bigg(\mathbb{E}_{x}\Bigg[\Bigg(\frac{g(x)^{\frac{1}{\lceil n/2\rceil}}}{1+\|x\|^{2}}\Bigg)^{\lceil n/2\rceil}\Bigg]\Bigg)^\frac1{\lceil n/2\rceil}\notag\\
& \ \ \ \ \ \ \ \ = \Bigg(\mathbb{E}_{x}\Bigg[\frac{g(x)}{(1+\|x\|^{2})^{\lceil n/2\rceil}}\Bigg]\Bigg)^\frac1{\lceil n/2\rceil}
\!\!\overset{\rm (c)}\le\! \Bigg(2 \mathbb{E}_{x}\Bigg[\frac{g(x)}{1+\|x\|^{n}}\Bigg]\Bigg)^\frac1{\lceil n/2\rceil}
\!\!\!\le\!(2\alpha_{c})^{\frac{1}{\lceil n/2\rceil}},
\end{align}
where (a) holds because $1+\|x\|^{2}\le(1+\|x\|)^{2}$, (b) follows from the Jensen's inequality,
(c) holds since $1+\|x\|^{n}\le2(1+\|x\|^{2\lceil n/2\rceil})\le2(1+\|x\|^{2})^{\lceil n/2\rceil}$, and the last inequality follows from \eqref{conditionss}.
On the other hand, note that $\mathbb{E}_{x}[\|x\|^{2}]\le\beta_{c}$ implies $\mathbb{E}_{x}[\|x\|]\le\sqrt{\beta_{c}}$ and hence
$\mathbb{E}_{x}[(1+\|x\|)^{2}]\le(1+\sqrt{\beta_{c}})^{2}$.
Combining this with \eqref{eq082601} gives
\begin{align*}
 (1+\sqrt{\beta_{c}})^2(2\alpha_{c})^{\frac{1}{\lceil n/2\rceil}}
 \!\ge\!
\mathbb{E}_{x}[(1+\|x\|)^{2}]\cdot\mathbb{E}_{x}\Bigg[\Bigg(\frac{g(x)^{\frac{1}{2\lceil n/2\rceil}}}{1+\|x\|}\Bigg)^{2}\Bigg]
\!\ge\!
 \Bigg(\mathbb{E}_{x}\bigg[g(x)^{\frac{1}{2\lceil n/2\rceil}}\bigg]\Bigg)^2.
\end{align*}
\end{proof}
}

\subsection{\color{revise}Convex SPB minimization}\label{sec431}
Here, we assume in addition that the objective function $f$ is convex, rendering problem~\eqref{problem} a convex optimization problem. The specific algorithm for this case is {\color{revise}presented in Algorithm~\ref{algorithm 001}}. {\color{revise}Algorithm~\ref{algorithm 001} can be seen as a natural extension of the algorithm in \cite[Eq.~(39)]{NesterovGS}: indeed, when $m = 0$, our algorithm essentially reduces to their algorithm (with an extra scaling factor of $0.5$ in our stepsize). Here, we scale the stepsize by $\|x^{k}\|^{m}+1$ to account for the lack of Lipschitzness.}
\begin{algorithm}[h]
\caption{GS-based zeroth-order algorithm for convex problem~\eqref{problem}} \label{algorithm 001}
\begin{algorithmic}[1]
\State \textbf{Input:} Initial point $x^0\in {\color{revise} \Omega}$, $\{\tau_k\}\subset(0,1]$ and $\sigma>0$. Let $m$ be defined as in \eqref{definitioneq01} corresponding to our $f\in \spb$.
\For {$k = 0,1,2,\dots$}
\State Generate $u^k\sim {\cal N}(0,I)$ and form $v^k= \frac1\sigma [f(x^k + \sigma u^k)-f(x^k)]u^k$.

\State Compute
			\[
				x^{k+1} =P_{{\color{revise} \Omega}} \left(x^k - \tau_k \cdot \frac{v^{k}}{\|x^{k}\|^{m}+1}\right).
			\]	
\EndFor
\end{algorithmic}
\end{algorithm}

{\color{revise}The following result establishes the convergence rate of Algorithm~\ref{algorithm 001}.
Due to the weak assumptions, $\{x^k\}$ can be unbounded in general. Therefore, a \emph{relative} optimality measure that is rescaled by $\|x^k\|^m + 1$ is adopted.
Complexity result based on the standard measure $f(x^{k})-f(x^*)$ will be derived in Corollary~\ref{cor4.7} under additional assumptions.}

\begin{theorem}[{{Complexity bound for Algorithm~\ref{algorithm 001}}}]\label{thm:complexity_convex}
Consider problem~\eqref{problem}, where $f\in \spb$ with parameters ${\rm R}_1$, ${\rm R}_2$ and $m$ as in \eqref{definitioneq01}. Assume additionally that $f$ is convex and there exists an optimal solution $x^*$ for \eqref{problem}.
Then the sequence $\{x^k\}$ generated by Algorithm~\ref{algorithm 001} satisfies that
\begin{align}
  {\color{revise}0  \le \mathbb{E}[\|x^{k}-z\|^{2}]-\mathbb{E}[\|x^{k+1}-z\|^{2}]+{\cal H}_{(2)}\tau_{k}^{2}\ \ \ \ \ \ \ \forall z\in\argmin_{u\in {\color{revise} \Omega}}f_\sigma(u),}\label{eq090502}
\end{align}
and for any $T > 0$,
\begin{align}
\min_{0\le k\le T}\mathbb{E}\bigg[\frac{f(x^{k})-f(x^*)}{\|x^{k}\|^{m}+1}\bigg] \!\le\! \frac{1}{2\sum_{k=0}^{T}\tau_{k}} \bigg[\|x^{0}-x^*\|^{2}\!+\!{\color{revise}{\cal H}_{(2)}}\sum_{k=0}^{T}\tau_{k}^{2}\bigg]\!+\!\mathcal{M}(x^*)\cdot \sigma,\label{hahaheheha}
\end{align}
where {\color{revise}$\mathcal{M}(\cdot)$ and ${\cal H}_{(\cdot)}$ are defined in Lemma~\ref{lem121001} and Lemma~\ref{lem112901}, respectively.}
\end{theorem}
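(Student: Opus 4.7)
The plan is to run a projected stochastic gradient-type analysis, carefully cancelling the adaptive stepsize factor against the polynomial second-moment bound on $v^k$. First I would use the non-expansiveness of $P_{\Omega}$ together with the update rule to obtain, for any $z\in\Omega$,
\[
\|x^{k+1}-z\|^{2}\le \|x^{k}-z\|^{2}-\frac{2\tau_{k}\langle v^{k},x^{k}-z\rangle}{\|x^{k}\|^{m}+1}+\frac{\tau_{k}^{2}\|v^{k}\|^{2}}{(\|x^{k}\|^{m}+1)^{2}}.
\]
Conditioning on $x^{k}$ and taking expectation over $u^{k}$, Theorem~\ref{lem053101} yields $\mathbb{E}_{u^{k}}[v^{k}]=\nabla f_{\sigma}(x^{k})$, and Lemma~\ref{lem112901} with $p=2$ gives $\mathbb{E}_{u^{k}}[\|v^{k}\|^{2}]\le {\cal H}_{(2)}(\|x^{k}\|^{2m}+1)$. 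The elementary inequality $\|x^{k}\|^{2m}+1\le(\|x^{k}\|^{m}+1)^{2}$ then cancels the adaptive factor in the stochastic term, producing the one-step recursion
\[
\mathbb{E}\bigl[\|x^{k+1}-z\|^{2}\mid x^{k}\bigr]\le \|x^{k}-z\|^{2}-\frac{2\tau_{k}}{\|x^{k}\|^{m}+1}\langle\nabla f_{\sigma}(x^{k}),\,x^{k}-z\rangle+{\cal H}_{(2)}\tau_{k}^{2}.
\]

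For \eqref{eq090502}, I would take $z\in\argmin_{u\in\Omega}f_{\sigma}(u)$. Since $f$ is convex, $f_{\sigma}$ is convex too (as an expectation of translates of $f$), so convexity together with the optimality of $z$ on $\Omega$ gives $\langle\nabla f_{\sigma}(x^{k}),x^{k}-z\rangle\ge f_{\sigma}(x^{k})-f_{\sigma}(z)\ge 0$. Dropping this non-negative cross term and taking total expectation yields \eqref{eq090502}.

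For \eqref{hahaheheha}, I would take $z=x^{*}$ and again use convexity of $f_{\sigma}$ to bound $\langle\nabla f_{\sigma}(x^{k}),x^{k}-x^{*}\rangle\ge f_{\sigma}(x^{k})-f_{\sigma}(x^{*})$. Two observations translate this back to $f$: Jensen's inequality applied to the convex $f$ gives $f_{\sigma}(x^{k})\ge f(x^{k})$, and Lemma~\ref{lem121001} gives $f_{\sigma}(x^{*})\le f(x^{*})+\mathcal{M}(x^{*})\sigma$. Combining these with the crude bound $\|x^{k}\|^{m}+1\ge 1$, substituting into the recursion, and telescoping from $k=0$ to $T$ yields
\[
2\sum_{k=0}^{T}\tau_{k}\,\mathbb{E}\!\left[\frac{f(x^{k})-f(x^{*})}{\|x^{k}\|^{m}+1}\right]\le \|x^{0}-x^{*}\|^{2}+{\cal H}_{(2)}\sum_{k=0}^{T}\tau_{k}^{2}+2\mathcal{M}(x^{*})\sigma\sum_{k=0}^{T}\tau_{k},
\]
and dividing by $2\sum_{k=0}^{T}\tau_{k}$ while bounding the minimum by the weighted average gives \eqref{hahaheheha}.

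The main obstacle is the very first step: synchronising the adaptive factor $(\|x^{k}\|^{m}+1)^{-1}$ in the stepsize with the polynomially-growing second-moment estimate for $v^{k}$ provided by Lemma~\ref{lem112901}, so that the resulting stochastic error term becomes independent of the iterate. Once this cancellation is secured, the rest is routine projected-SGD bookkeeping combined with the $f$-to-$f_{\sigma}$ comparison supplied by Jensen's inequality and Lemma~\ref{lem121001}.
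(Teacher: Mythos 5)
Your proposal is correct and follows essentially the same route as the paper's proof: non-expansiveness of the projection, unbiasedness of $v^k$ from Theorem~\ref{lem053101}, the second-moment bound of Lemma~\ref{lem112901} with $p=2$ combined with $\|x^k\|^{2m}+1\le(\|x^k\|^m+1)^2$ to cancel the adaptive factor, convexity of $f_\sigma$ for \eqref{eq090502}, and the $f$-to-$f_\sigma$ comparison via Jensen and Lemma~\ref{lem121001} followed by telescoping for \eqref{hahaheheha}. No gaps.
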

\begin{proof}
Since the projection operator is nonexpansive, we see that {\color{revise}for any $z\in \Omega$,}
\begin{align*}
  {\color{revise}\|x^{k+1}-z\|^2} & {\color{revise}=\|P_{{\color{revise} \Omega}}(x^{k}-\alpha_{k}v^{k})-z\|^{2}\le\|x^{k}-z-\alpha_{k}v^{k}\|^{2}}\\
  &{\color{revise}=\|x^{k}-z\|^{2}-2\alpha_{k}\langle v^{k},x^{k}-z\rangle+\alpha_{k}^{2}\|v^{k}\|^{2},}
\end{align*}
where $\alpha_{k}=\tau_{k}/(\|x^{k}\|^{m}+1)$. Taking the expectation on both sides of the above inequality with respect to the random variable $v^k$, we can obtain from \eqref{GSgradient} that
\begin{align}\label{display1}
{\color{revise}2\alpha_{k}\langle \nabla f_{\sigma}(x^{k}),x^{k}\!-\!z\rangle\!\le \!\|x^{k}\!\!-\!\!z\|^{2}\!-\!\mathbb{E}_{u^k\sim {\cal N}(0,I)}[\|x^{k+1}\!\!-\!z\|^{2}]\!+\!\alpha_{k}^{2}\mathbb{E}_{u^k\sim {\cal N}(0,I)}[\|v^{k}\|^{2}].\!\!\!\!\!\!\!}
\end{align}
Next, by Definition~\ref{def_GSfunction} and the convexity of $f$, $f_{\sigma}$ is also convex. Then we have the following subgradient inequality:
\begin{align}\label{display2}
  {\color{revise}f_{\sigma}(x^{k})-f_{\sigma}(z)  \le \langle \nabla f_{\sigma}(x^{k}),x^{k}-z\rangle.}
\end{align}
In addition, we have from {\color{revise}Lemma~\ref{lem112901} with $p = 2$} that
\begin{align}\label{display3}
 \mathbb{E}_{u^k\sim {\cal N}(0,I)}\left[\|v^{k}\|^{2}\right]\le {\color{revise}{\cal H}_{(2)}}(\|x^k\|^{2m}+1)\le{\color{revise}{\cal H}_{(2)}}(\|x^k\|^{m}+1)^{2}.
\end{align}
Combining \eqref{display1}, \eqref{display2} and \eqref{display3} yields
\begin{align}\label{display3.5}
  {\color{revise} 2\alpha_{k}[f_{\sigma}(x^{k})-f_{\sigma}(z)]  \le \|x^{k}-z\|^{2}-\mathbb{E}_{u^{k}\sim {\cal N}(0,I)}[\|x^{k+1}-z\|^{2}]+{\cal H}_{(2)}\tau_{k}^{2},}
\end{align}
{\color{revise} which gives \eqref{eq090502} upon taking expectation on both sides.}

We now lower bound the left-hand side of \eqref{display3.5} {\color{revise}for $z=x^*$}. To this end, we first note from \cite[Eq.~(11)]{NesterovGS} that
\begin{align}\label{display4}
  f_{\sigma}(x^{k}) \geq f(x^{k}).
\end{align}
Also, we have
\begin{align}\label{display5}
  f_{\sigma}(x^*) =f_{\sigma}(x^*)-f(x^*)+f(x^*)\le \mathcal{M}(x^*)\cdot\sigma+f(x^*),
\end{align}
where the inequality follows from Lemma~\ref{lem121001}. Using \eqref{display4} and \eqref{display5}, we can lower bound the left hand side of {\color{revise}\eqref{eq090502}}, which in turn yields
\begin{align*}
  \mathbb{E}[2\alpha_{k}(f(x^{k})-f(x^*))] \le \mathbb{E}[\|x^{k}-x^*\|^{2}]-\mathbb{E}[\|x^{k+1}-x^*\|^{2}]+2\mathcal{M}(x^*)\sigma\cdot\alpha_{k}+{\color{revise}{\cal H}_{(2)}}\tau_{k}^{2}.
\end{align*}
{\color{revise}Finally,} invoking the definition of $\alpha_k$ (and noting also that $\alpha_k\le \tau_k$), we deduce {\color{revise} from the above display} that
\begin{align*}
  2\tau_{k}\mathbb{E}\!\left[\frac{f(x^{k})-f(x^*)}{\|x^{k}\|^{m}+1}\right] \le \mathbb{E}\left[\|x^{k}-x^*\|^{2}\right] - \mathbb{E}[\|x^{k+1}-x^*\|^{2}] + 2\mathcal{M}(x^*)\sigma\cdot\tau_{k} + {\color{revise}{\cal H}_{(2)}}\tau_{k}^{2},
\end{align*}
which, upon summing over $k$, completes the proof.
\end{proof}
{\color{revise}
\begin{remark}[Comparing existing complexity results for \eqref{problem} with a convex $f$]
When $m = 0$, ${\rm R}_1 = 0$. We then see from Lemma~\ref{lem112901} that ${\cal H}_{(2)} = 3{\rm R}_2^2(4+d)^2$ and Lemma~\ref{lem121001} that $\mathcal{M}(x^*)= {\rm R}_2 \sqrt{d}$. These together with \eqref{hahaheheha} give a bound that matches the one obtained in \cite[Theorem~6]{NesterovGS} up to a constant scaling factor.
\end{remark}
}

{\color{revise}
Theorem~\ref{thm:complexity_convex} gives a bound on $\mathbb{E}\bigg[\frac{f(x^{k})-f(x^*)}{\|x^{k}\|^{m}+1}\bigg]$, which can be regarded as a relative optimality measure when $m\ge 1$. The next corollary shows that under suitable assumptions on $f$, one can derive a bound on an \emph{absolute} optimality measure.
}

{\color{revise}
\begin{corollary}\label{cor4.7}
Consider problem~\eqref{problem}, where $f\in \spb$ with parameters ${\rm R}_1$, ${\rm R}_2$ and $m\ge1$ in \eqref{definitioneq01}. Assume that $f$ is convex and level-bounded, and let $x^*$ be a minimizer for \eqref{problem}. Let $\gamma \in (0,1]$ and $T$ be a positive integer. If $\tau_{k}={\gamma}/{\sqrt{T+1}}$ for $k=0,\cdots,T$, then the sequence $\{x^k\}$ generated by Algorithm~\ref{algorithm 001} satisfies
\begin{align*}
\min_{0\le k\le T}\mathbb{E}\big[(f(x^{k})-f(x^*))^{\frac{1}{(2\lceil m/2\rceil)}}\big]
\le\big(1+\sqrt{M_{\rm bd}}\big)\big(2C_{\rm bd}T^{-1/2}+2\mathcal{M}(x^*)\sigma\big)^{\frac{1}{(2\lceil m/2\rceil)}},
\end{align*}
where
\begin{align}
 M_{\rm bd}&=4\|x^{0}\|^{2}+6C_{\rm lev}^{2}+2{\cal H}_{(2)}\gamma^{2}, \ \ \ C_{\rm bd}=\frac{1}{2\gamma} \left(\|x^{0}-x^*\|^{2}+{\cal H}_{(2)}\gamma^{2}\right),\label{mdcd2}\\
 C_{\rm lev}&=\sup\{\|x\|:f(x) \le f(x^*)+\mathcal{M}(x^*)\sigma\} < \infty, \label{alphastar}
\end{align}
$\mathcal{M}(\cdot)$ is defined in Lemma~\ref{lem121001} and $ {\cal H}_{(.)}$ is defined in Lemma~\ref{lem112901}.\footnote{\color{revise}The finiteness of $C_{\rm lev}$ follows from the assumption that $f$ is level-bounded.}
\end{corollary}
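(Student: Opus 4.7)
The plan is to invoke Lemma~\ref{lem082701} with $n=m$, the nonnegative lower semicontinuous function $g(y)=f(y)-f(x^*)$ (nonnegative since $x^*$ minimizes $f$, and continuous by convexity), and the random vector $x=x^{k^*}$ where $k^*$ denotes the deterministic index attaining the minimum in Theorem~\ref{thm:complexity_convex}. This reduces the proof to verifying the two moment hypotheses \eqref{conditionss} with $\alpha_c=C_{\rm bd}T^{-1/2}+\mathcal{M}(x^*)\sigma$ and $\beta_c=M_{\rm bd}$, after which the stated bound follows because the left-hand side, being $\min_{k}\mathbb{E}[\cdot]$, is dominated by the value at $k=k^*$.

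For the $\alpha_c$ hypothesis, I would plug $\tau_k=\gamma/\sqrt{T+1}$ into \eqref{hahaheheha}. Using $\sum_{k=0}^{T}\tau_k=\gamma\sqrt{T+1}$ and $\sum_{k=0}^{T}\tau_k^{2}=\gamma^{2}$, we obtain
\[
\min_{0\le k\le T}\mathbb{E}\bigg[\frac{f(x^{k})-f(x^*)}{\|x^{k}\|^{m}+1}\bigg]\le\frac{C_{\rm bd}}{\sqrt{T+1}}+\mathcal{M}(x^*)\sigma\le C_{\rm bd}T^{-1/2}+\mathcal{M}(x^*)\sigma,
\]
so $k^*$ satisfies the second inequality in \eqref{conditionss} with the announced $\alpha_c$.

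For the $\beta_c$ hypothesis, I would derive a \emph{uniform} bound $\mathbb{E}[\|x^{k}\|^{2}]\le M_{\rm bd}$ from the telescoping relation \eqref{eq090502}. Iterating it yields, for any $z\in\argmin_{u\in\Omega}f_{\sigma}(u)$,
\[
\mathbb{E}[\|x^{k}-z\|^{2}]\le\|x^{0}-z\|^{2}+\mathcal{H}_{(2)}\gamma^{2}\le 2\|x^{0}\|^{2}+2\|z\|^{2}+\mathcal{H}_{(2)}\gamma^{2},
\]
which, combined with $\|x^{k}\|^{2}\le 2\|x^{k}-z\|^{2}+2\|z\|^{2}$, delivers the target $M_{\rm bd}$ provided $\|z\|\le C_{\rm lev}$. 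Bounding $\|z\|$ in this way is the main obstacle. The argument I have in mind uses that, by Definition~\ref{def_GSfunction} and Jensen's inequality, $f_\sigma\ge f$ (as noted with citation \cite[Eq.~(11)]{NesterovGS} in the proof of Theorem~\ref{thm:complexity_convex}); hence level sets of $f_\sigma$ are contained in those of $f$, so the level-boundedness of $f$ transfers to $f_\sigma$, and together with convexity and continuity of $f_\sigma$ over the closed convex set $\Omega$ this guarantees that $\argmin_{u\in\Omega}f_\sigma(u)$ is nonempty and that $C_{\rm lev}<\infty$. For any such $z$, Lemma~\ref{lem121001} and the minimality of $z$ for $f_\sigma$ over $\Omega$ give
\[
f(z)\le f_\sigma(z)\le f_\sigma(x^*)\le f(x^*)+\mathcal{M}(x^*)\sigma,
\]
which places $z$ in the level set defining $C_{\rm lev}$ in \eqref{alphastar}, so $\|z\|\le C_{\rm lev}$ as needed.

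With both hypotheses verified, a direct application of Lemma~\ref{lem082701} to $x^{k^*}$ yields
\[
\mathbb{E}\big[(f(x^{k^*})-f(x^*))^{\frac{1}{2\lceil m/2\rceil}}\big]\le\big(1+\sqrt{M_{\rm bd}}\big)\big(2C_{\rm bd}T^{-1/2}+2\mathcal{M}(x^*)\sigma\big)^{\frac{1}{2\lceil m/2\rceil}},
\]
and taking the minimum over $k$ on the left concludes the proof.
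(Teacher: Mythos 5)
Your proposal is correct and follows essentially the same route as the paper's proof: apply Lemma~\ref{lem082701} at the index $k_*$ minimizing the relative gap, verify the first moment condition from \eqref{hahaheheha} with the constant stepsize, and verify the second by telescoping \eqref{eq090502} against a minimizer $z^*$ of $f_\sigma$ over $\Omega$, whose norm is controlled by $C_{\rm lev}$ via $f(z^*)\le f_\sigma(z^*)\le f_\sigma(x^*)\le f(x^*)+\mathcal{M}(x^*)\sigma$. All constants match the stated $M_{\rm bd}$ and $C_{\rm bd}$.
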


\begin{proof}
Let $k_{*}\in\arg\min_{0\le  k \le T}\mathbb{E}\left[\frac{f(x^{k})-f(x^*)}{\|x^{k}\|^{m}+1}\right]$. From Theorem \ref{thm:complexity_convex}, we have
\begin{align}\label{eq082702pm}
&\mathbb{E}\left[\frac{f(x^{k_{*}})-f(x^*)}{\|x^{k_{*}}\|^{m}+1}\right]
\le \frac{1}{2\sum_{k=0}^{T}\tau_{k}} \left[\|x^{0}-x^*\|^{2}+{\cal H}_{(2)}\sum_{k=0}^{T}\tau_{k}^{2}\right]+\mathcal{M}(x^*)\sigma\notag\\
&\overset{\rm (a)}\le \frac{1}{2\gamma\sqrt{T}} \left(\|x^{0}-x^*\|^{2}+{\cal H}_{(2)}\gamma^{2}\right)+\mathcal{M}(x^*)\sigma \le\frac{C_{\rm bd}}{\sqrt{T}}+\mathcal{M}(x^*)\sigma,
\end{align}
where we used the fact that $\tau_k = {\gamma}/{\sqrt{T+1}}$ in (a), and $C_{\rm bd}$ is defined in \eqref{mdcd2}.

Next, pick any $z^*\in \arg\min_{x\in {\color{revise} \Omega}}f_{\sigma}(x)$.\footnote{\color{revise}Notice that $\arg\min_{x\in {\color{revise} \Omega}}f_\sigma(x)$ is nonempty because the level-boundedness of $f$ and \cite[Eq.~(11)]{NesterovGS} imply the level-boundedness of $f_\sigma$.} Then we see from \eqref{eq090502} that
\begin{align}\label{nonnegative}
  0\le \mathbb{E}[\|x^{k}-z^*\|^{2}]-\mathbb{E}[\|x^{k+1}-z^*\|^{2}]+{\cal H}_{(2)}\tau_{k}^{2}.
\end{align}
When $k_* \ge 1$, we can sum both sides of \eqref{nonnegative} from $k = 0$ to $k_*-1$ to obtain
\begin{align*}
 \mathbb{E}[\|x^{k_{*}}-z^*\|^{2}]&\le\|x^{0}-z^*\|^{2}+{\cal H}_{(2)}\sum_{i=0}^{k_{*}-1}\tau_{i}^{2}
 \le2\|x^{0}\|^{2}+2\|z^*\|^{2}+{\cal H}_{(2)}\gamma^{2},
\end{align*}
where we used the fact that $\tau_k = {\gamma}/{\sqrt{T+1}}$ for the last inequality.
The above display further implies that
\begin{align}\label{eq082701pm0}
 \mathbb{E}[\|x^{k_{*}}\|^{2}]\le 2\mathbb{E}[\|x^{k_{*}}-z^*\|^{2}] + 2\|z^*\|^2 \le4\|x^{0}\|^{2}+6\|z^*\|^{2}+2{\cal H}_{(2)}\gamma^{2}.
\end{align}
Note that the above inequality also holds when $k_* = 0$.

Now, since $z^*\in \arg\min_{x\in \Omega}f_{\sigma}(x)$, we have
\begin{align*}
f(z^*)\le f_{\sigma}(z^*)\le f_{\sigma}(x^*)\le f(x^*)+\mathcal{M}(x^*)\sigma,
\end{align*}
where the first inequality follows from \cite[Eq.~(11)]{NesterovGS} and the last inequality follows from Lemma~\ref{lem121001}. Thus, it holds that $\sup\{\|z\|:\; z\in \arg\min_{x\in \Omega}f_{\sigma}(x)\}\le C_{\rm lev}<+\infty$, where $C_{\rm lev}$ is defined in \eqref{alphastar}. This observation together with \eqref{eq082701pm0} implies that
\begin{align}\label{eq082701pm}
 \mathbb{E}[\|x^{k_{*}}\|^{2}]\le M_{\rm bd},
\end{align}
where $M_{\rm bd}$ is defined in \eqref{mdcd2}.
The desired conclusion now follows immediately upon combining \eqref{eq082701pm} with \eqref{eq082702pm} and Lemma \ref{lem082701}.
\end{proof}
}

\subsection{Unconstrained SPB minimization}
Here, we consider problem~\eqref{problem} with ${\color{revise} \Omega} = \R^d$. The specific algorithm is presented in Algorithm~\ref{algorithm 01} below.
Notice that the update rule for $x^k$ differs from that of {\color{revise}Algorithm~\ref{algorithm 001} in that the stepsize} has to be rescaled by $\|x^{k}\|^{2m}+1$ instead of $\|x^k\|^m + 1$.
This rescaling also makes our algorithm (for SPB functions) {\color{revise}a natural extension of the ones in \cite[Eq.~(66)]{NesterovGS} and \cite[Algorithm~1]{Lin2022}}, which are designed for $f$ being globally Lipschitz. {\color{revise} Indeed, when $m = 0$, our algorithm essentially reduces to \cite[Eq.~(66)]{NesterovGS} (with an extra factor of $0.5$ in our stepsize).}

\begin{algorithm}[h]
\caption{GS-based zeroth-order algorithm for unconstrained problem~\eqref{problem}} \label{algorithm 01}
\begin{algorithmic}[1]
\State \textbf{Input:} Initial point $x^0\in \mathbb{R}^d$, $\{\tau_k\}\subset(0,1]$ and $\sigma>0$. Let $m$ be defined as in \eqref{definitioneq01} corresponding to our $f\in \spb$.
\For {$k = 0,1,2,\dots$}
\State Generate $u^k\sim {\cal N}(0,I)$ and form $v^k = \frac1\sigma [f(x^k + \sigma u^k)-f(x^k)]u^k$.

\State Compute
			\[
				x^{k+1} = x^k - \tau_k \cdot \frac{v^{k}}{\|x^{k}\|^{2m}+1}.
			\]
\EndFor
\end{algorithmic}
\end{algorithm}
%
%

\begin{theorem}[{{Complexity bound for Algorithm~\ref{algorithm 01}}}]\label{HBcomplexity01}
Consider \eqref{problem}, where $f\in \spb$ with parameters ${\rm R}_1$, ${\rm R}_2$ and $m$ as in \eqref{definitioneq01}. Let $\nabla f_{\sigma}$ be given in \eqref{GSgradient} and $\mathcal{A}$, $\mathcal{B}$, $\mathcal{C}$ be given in \eqref{eq102801}. Assume in addition that ${\color{revise} \Omega} = \mathbb{R}^d$ and {\color{revise}$\inf f > -\infty$}. Then the sequence $\{x^k\}$ generated by Algorithm~\ref{algorithm 01} satisfies that
\begin{align}\label{hehehaha1}
{\color{revise}
\mathbb{E}\left[f_{\sigma}(x^{k+1})\right]\le
\mathbb{E}\left[f_{\sigma}(x^{k})\right]
  +\frac{{\color{revise}{\cal H}_{(2)}}\tau_{k}^{2}({\cal A}+{\cal B})}2+\frac{\mathcal{C}}{m+2}{\color{revise}{\cal H}_{(m+2)}}\tau_{k}^{m+2},
}
\end{align}
{\color{revise}and} for any $T > 0$,
\begin{align}
{\color{revise}\min_{0\le k\le T}\widetilde{w}_{k}^{2}\!\le\! \frac{\mathcal{M}(x^0)\sigma\!+\!f(x^{0})\!-\!\inf f\!+\!0.5{\cal H}_{(2)}({\cal A}+{\cal B})\sum_{k=0}^{T}\tau_{k}^{2}+\frac{{\cal H}_{(m+2)} \mathcal{C}}{m+2}\sum_{k=0}^{T}\tau_{k}^{m+2}}{\sum_{k=0}^T\tau_k}},\notag
\end{align}
where {\color{revise}$\widetilde{w}_{k}^{2}=\mathbb{E}\big[\big\|\frac{\nabla f_{\sigma}(x^{k})}{\|x^{k}\|^{m}+1}\big\|^{2}\big]$}, $\mathcal{M}(\cdot)$ is defined in Lemma~\ref{lem121001} and {\color{revise} ${\cal H}_{(\cdot)}$ is defined in Lemma~\ref{lem112901}}.
\end{theorem}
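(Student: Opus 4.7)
The overall plan is to derive a one-step descent inequality for $f_\sigma$ by applying the descent lemma (Lemma~\ref{lem120901}) along the update direction $x^{k+1}-x^k = -\tau_k v^k/(\|x^k\|^{2m}+1)$, then take conditional expectation over the Gaussian perturbation $u^k$ (using that $v^k$ is an unbiased estimator of $\nabla f_\sigma(x^k)$ by \eqref{GSgradient} and that its $p$-th moments are polynomially bounded by Lemma~\ref{lem112901}), and finally telescope to control $\widetilde{w}_k^2$.

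More concretely, substituting the update into Lemma~\ref{lem120901} yields
\[
f_\sigma(x^{k+1})-f_\sigma(x^k)\le -\tau_k\frac{\langle \nabla f_\sigma(x^k),v^k\rangle}{\|x^k\|^{2m}+1}+\frac{({\cal A}+{\cal B}\|x^k\|^m)\tau_k^2\|v^k\|^2}{2(\|x^k\|^{2m}+1)^2}+\frac{{\cal C}\tau_k^{m+2}\|v^k\|^{m+2}}{(m+2)(\|x^k\|^{2m}+1)^{m+2}}.
\]
Taking conditional expectation over $u^k$, the linear term becomes $-\tau_k\|\nabla f_\sigma(x^k)\|^2/(\|x^k\|^{2m}+1)$. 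For the remaining two terms, Lemma~\ref{lem112901} applied with $p=2$ and $p=m+2$ bounds $\mathbb{E}[\|v^k\|^2\mid x^k]$ by ${\cal H}_{(2)}(\|x^k\|^{2m}+1)$ and $\mathbb{E}[\|v^k\|^{m+2}\mid x^k]$ by ${\cal H}_{(m+2)}(\|x^k\|^{m(m+2)}+1)$. The key elementary inequalities $\|x^k\|^m\le\|x^k\|^{2m}+1$ (yielding ${\cal A}+{\cal B}\|x^k\|^m\le({\cal A}+{\cal B})(\|x^k\|^{2m}+1)$) and $\|x^k\|^{m(m+2)}+1\le(\|x^k\|^{2m}+1)^{m+2}$ then allow the $\|x^k\|$-dependent factors in the numerators to cancel the high powers in the denominators, leaving the clean bounds $\frac{{\cal H}_{(2)}\tau_k^2({\cal A}+{\cal B})}{2}$ and $\frac{{\cal C}{\cal H}_{(m+2)}\tau_k^{m+2}}{m+2}$. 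Taking full expectation and applying $(\|x^k\|^m+1)^2\ge\|x^k\|^{2m}+1$ to convert the descent term into $\tau_k\widetilde{w}_k^2$ yields
\[
\mathbb{E}[f_\sigma(x^{k+1})]\le\mathbb{E}[f_\sigma(x^k)]-\tau_k\widetilde{w}_k^2+\frac{{\cal H}_{(2)}\tau_k^2({\cal A}+{\cal B})}{2}+\frac{{\cal C}{\cal H}_{(m+2)}\tau_k^{m+2}}{m+2},
\]
and dropping the nonpositive term $-\tau_k\widetilde{w}_k^2$ recovers \eqref{hehehaha1}.

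For the min bound, I would telescope the strengthened one-step inequality above (retaining $-\tau_k\widetilde{w}_k^2$) from $k=0$ to $T$. Using $\mathbb{E}[f_\sigma(x^{T+1})]\ge\inf f_\sigma\ge\inf f$ (the second inequality holding because $f_\sigma(x)=\mathbb{E}_{u\sim\mathcal{N}(0,I)}[f(x+\sigma u)]\ge\inf f$) and $f_\sigma(x^0)\le f(x^0)+\mathcal{M}(x^0)\sigma$ from Lemma~\ref{lem121001}, I obtain an upper bound on $\sum_{k=0}^T\tau_k\widetilde{w}_k^2$ matching the numerator of the claimed estimate. Dividing by $\sum_{k=0}^T\tau_k$ and invoking $\min_{0\le k\le T}\widetilde{w}_k^2\le\sum_k\tau_k\widetilde{w}_k^2/\sum_k\tau_k$ completes the proof.

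The main obstacle I foresee is precisely the polynomial cancellation in the second step. The descent lemma introduces factors $\|x^k\|^m$ and $\|x^{k+1}-x^k\|^m$, while Lemma~\ref{lem112901} bounds moments of $v^k$ by expressions of order $\|x^k\|^{mp}+1$; these must be reconciled with the adaptive denominator $\|x^k\|^{2m}+1$ so that no residual $\|x^k\|$-dependent terms remain after taking expectation. The $2m$-rescaling of the stepsize in Algorithm~\ref{algorithm 01} (as opposed to the $m$-rescaling used in Algorithm~\ref{algorithm 001}) is precisely tailored to make these cancellations work, and without it the leftover $\|x^k\|$-dependent terms would obstruct the telescoping argument since iterates need not be bounded in the non-convex setting.
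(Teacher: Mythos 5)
Your proposal is correct and follows essentially the same route as the paper's proof: apply the descent lemma along the update, take conditional expectation so the linear term becomes $-\widetilde\alpha_k\|\nabla f_\sigma(x^k)\|^2$, cancel the polynomial factors from Lemma~\ref{lem112901} against the $(\|x^k\|^{2m}+1)$-rescaled stepsize, and telescope using $f_\sigma(x^0)\le f(x^0)+\mathcal{M}(x^0)\sigma$ and $f_\sigma\ge\inf f$. The only (immaterial) difference is in the elementary inequality used for the $(m+2)$-th moment term, where the paper uses $\|x\|^{m(m+2)}+1\le(\|x\|^{2m}+1)^{(m+2)/2}$ instead of your exponent $m+2$.
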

\begin{proof}
  From Lemma \ref{lem120901}, we know that for any $k \ge 0$,
\begin{align}
f_{\sigma}(x^{k+1})&\leq f_{\sigma}(x^{k})-\langle\nabla f_{\sigma}(x^{k}),x^{k}-x^{k+1}\rangle\notag\\
&~~~+\frac{1}{2}\|x^{k}-x^{k+1}\|^{2}\bigg[\mathcal{A}+\mathcal{B}\|x^{k}\|^{m}+\frac{2\mathcal{C}\|x^{k+1}-x^{k}\|^{m}}{m+2}\bigg].\notag
\end{align}
This implies that
\begin{align*}
&\widetilde\alpha_{k}\langle\nabla f_{\sigma}(x^k),v^k\rangle\le f_{\sigma}(x^k)-f_{\sigma}(x^{k+1})+\frac{(\mathcal{A}+\mathcal{B}\|x^k\|^{m})\widetilde\alpha_{k}^{2}}2\|v^{k}\|^{2}
+\frac{\mathcal{C}\widetilde\alpha_{k}^{m+2}}{m+2}\|v^{k}\|^{m+2},
\end{align*}
where
\begin{equation}\label{alphak01}
  \widetilde\alpha_k = \tau_k / (\|x^k\|^{2m} + 1).
\end{equation}
By taking the expectation on both sides of the above inequality, we can obtain from \eqref{GSgradient} that
\begin{align}
  \!\widetilde\alpha_{k}\|\nabla f_{\sigma}(x^k)\|^{2}
  \!&\!\le f_{\sigma}(x^k)-\mathbb{E}_{u^k\sim {\cal N}(0,I)}\left[ f_{\sigma}(x^{k+1})\right]\notag\\
  \!&~~+\!\frac{(\mathcal{A}+\mathcal{B}\|x^k\|^{m})\widetilde\alpha_{k}^{2}}2\mathbb{E}_{u^k\sim {\cal N}(0,I)}[\|v^{k}\|^{2}]
\!+\!\frac{\mathcal{C}\widetilde\alpha_{k}^{m+2}}{m+2}\mathbb{E}_{u^k\sim {\cal N}(0,I)}[\|v^{k}\|^{m+2}]\label{eq121001}\!.
\end{align}

We now upper bound the two terms $(\mathcal{A}+\mathcal{B}\|x^k\|^{m})\widetilde\alpha_{k}^{2}\mathbb{E}_{u^k\sim {\cal N}(0,I)}[\|v^{k}\|^{2}]$ and $\widetilde\alpha_{k}^{m+2}\mathbb{E}_{u^k\sim {\cal N}(0,I)}\left[\|v^{k}\|^{m+2}\right]$ in \eqref{eq121001}.
For the former term, we have
\begin{align*}
 &(\mathcal{A}+\mathcal{B}\|x^k\|^{m})\widetilde\alpha_{k}^{2}\mathbb{E}_{u^k\sim {\cal N}(0,I)}[\|v^{k}\|^{2}]
 \le(\mathcal{A}+\mathcal{B}\|x^k\|^{m})\widetilde\alpha_{k}^{2}{\color{revise}{\cal H}_{(2)}}(\|x^k\|^{2m}+1)\\
 &\le(\mathcal{A}+\mathcal{B}+\mathcal{B}\|x^k\|^{2m})\widetilde\alpha_{k}^{2}{\color{revise}{\cal H}_{(2)}}(\|x^k\|^{2m}+1)\\
 &\le{\color{revise}{\cal H}_{(2)}}\cdot({\cal A}+{\cal B})\cdot(\|x^k\|^{2m}+1)^{2}\widetilde\alpha_{k}^{2} ={\color{revise}{\cal H}_{(2)}}\tau_{k}^{2}({\cal A}+{\cal B}),
\end{align*}
where {\color{revise}the first inequality} follows from {\color{revise}Lemma~\ref{lem112901} with $p = 2$}, and we used the definition of $\widetilde\alpha_k$ in \eqref{alphak01} for the equality. As for the latter term (i.e.,
$\widetilde\alpha_{k}^{m+2}\mathbb{E}_{u^k\sim {\cal N}(0,I)}[\|v^{k}\|^{m+2}]$), we can deduce from {\color{revise}Lemma~\ref{lem112901} with $p = m+2$} that
\begin{align*}
  \widetilde\alpha_{k}^{m+2}\mathbb{E}_{u^k\sim {\cal N}(0,I)}[\|v^{k}\|^{m+2}] & \le\widetilde\alpha_{k}^{m+2}
  {\color{revise}{\cal H}_{(m+2)}}[\|x^{k}\|^{m(m+2)}+1]\\
  &\le\widetilde\alpha_{k}^{m+2}{\color{revise}{\cal H}_{(m+2)}}[\|x^{k}\|^{2m}+1]^{(m+2)/2} \le {\color{revise}{\cal H}_{(m+2)}}\tau_{k}^{m+2}.
\end{align*}

Combining \eqref{eq121001} with the above two displays, one has
\begin{align*}
  &\tau_{k}\mathbb{E}\bigg[\bigg\|\frac{\nabla f_{\sigma}(x^{k})}{\|x^{k}\|^{m}+1}\bigg\|^{2}\bigg] \overset{\rm (a)}\le \mathbb{E}[\widetilde\alpha_{k}\|\nabla f_{\sigma}(x^{k})\|^{2}]\\
  &\le\mathbb{E}[f_{\sigma}(x^{k})]-\mathbb{E}[f_{\sigma}(x^{k+1})]
  +\frac{{\color{revise}{\cal H}_{(2)}}\tau_{k}^{2}({\cal A}+{\cal B})}2+\frac{\mathcal{C}}{m+2}{\color{revise}{\cal H}_{(m+2)}}\tau_{k}^{m+2},
\end{align*}
where (a) follows from \eqref{alphak01}. {\color{revise}This proves \eqref{hehehaha1}.}
Summing both sides of the above display from $k = 0$ to $T$, we obtain further that
\begin{align*}
\sum_{k=0}^{T}\tau_{k}{\color{revise}\widetilde{w}_{k}^{2}}\le f_{\sigma}(x^{0})-\mathbb{E}[f_{\sigma}(x^{T+1})]+0.5
{\color{revise}{\cal H}_{(2)}}\cdot({\cal A}+{\cal B})\sum_{k=0}^{T}\tau_{k}^{2}+\frac{{\color{revise}{\cal H}_{(m+2)}}\cdot \mathcal{C}}{m+2}\sum_{k=0}^{T}\tau_{k}^{m+2}.
\end{align*}
Finally, we
{\color{revise}have
  $f_{\sigma}(x^{0}) \le \mathcal{M}(x^0)\cdot\sigma+f(x^{0})$ from Lemma \ref{lem121001}.}
The desired result now follows immediately upon combining this last observation with the above display.
\end{proof}
{\color{revise}
\begin{remark}[Comparing existing complexity results for \eqref{problem} with ${\color{revise} \Omega} = \R^d$]
When $m=0$, $\mathrm{R}_{1}=0$. We then see from \eqref{eq102801} that ${\cal A} = {\rm R}_2\sqrt{d}/\sigma$, ${\cal B} = {\cal C} = 0$. Moreover, we have $\mathcal{M}(x^0) = {\rm  R}_2\sqrt{d}$ (see Lemma~\ref{lem121001}), ${\cal H}_{(m+2)} = {\cal H}_{(2)} = 3{\rm  R}_2^2(4+d)^2$ (see Lemma~\ref{lem112901}).
If we let $\tau_{k}\equiv\tau$ for some $\tau \in (0,1]$, we see from Theorem \ref{HBcomplexity01} that
  \begin{align}\label{eq082001}
    \min_{0\le k\le T}\widetilde{w}_{k}^{2}\le \frac{1}{\tau (T+1)}\left[\Delta+\mathrm{R}_{2}\sqrt{d}\sigma+\frac{3\mathrm{R}_{2}^{3}}{2\sigma}(4+d)^{2}\sqrt{d}(T+1)\tau^{2}\right],
  \end{align}
  where $\Delta:=f(x^{0})-\inf f$. Let $\delta_{a} = \mathrm{R}_{2}\sqrt{d}\sigma$.\footnote{\color{revise}Note that in view of Lemma~\ref{lem121001} and \cite[Theorem~1]{NesterovGS}, our definition of $\delta_a$ corresponds to the $\epsilon$ defined three lines below \cite[Eq~(69)]{NesterovGS}.} Then \eqref{eq082001} gives
  \begin{align}
    \min_{0\le k\le T}\mathbb{E}[\|\nabla f_{\sigma}(x^{k})\|^{2}] \le
    4\bigg[\frac{1}{\tau (T+1)}(\Delta+\delta_{a})+\frac{3\mathrm{R}_{2}^{4}}{2\delta_{a}}(4+d)^{2}d\tau\bigg],
  \end{align}
  which matches the bound {\color{rerevise}in \cite[Section 7]{NesterovGS}} (up to a constant scaling factor).
\end{remark}
}
{\color{revise}
\begin{corollary}\label{coro082801}
Consider problem~\eqref{problem}, where $f\in \spb$ with parameters ${\rm R}_1$, ${\rm R}_2$ and $m\ge1$ in \eqref{definitioneq01}, and assume that ${\color{revise} \Omega} = \R^d$. Let $\nabla f_{\sigma}$ be given in \eqref{GSgradient} and $\mathcal{A}$, $\mathcal{B}$, $\mathcal{C}$ be given in \eqref{eq102801}. Assume that $\mathcal{S}:=\argmin_{u\in \R^d}f(u)$ is nonempty and bounded, and there exists $\mu > 0$ such that
\begin{align}\label{quad_growth2}
  f(x)-\inf_{u\in \R^d}f(u)\ge\frac{\mu}{2}{\rm dist}\left(x, {\cal S}\right)^2 \ \ \forall x\in \R^d.
\end{align}
Let $\gamma \in (0,1]$ and $T$ be a positive integer. If $\tau_{k}=\gamma/{\sqrt{T+1}}$ for $k=0,\cdots,T$, then the sequence $\{x^k\}$ generated by Algorithm~\ref{algorithm 01} satisfies $\mathbb{E}[\|x^{k_{*}}\|^{2}]\le \widetilde{M}_{{\color{revise} \Omega}}$ for any $k_{*}\in\arg\min_{0\le  k \le T}\widetilde{w}_{k}^{2}$ and
\begin{align}
\min_{0\le k \le T}\mathbb{E}\big[\|\nabla f_{\sigma}(x^{k})\|^{\frac{1}{2\lceil m/2 \rceil}}\big]
\le2^{\frac{1}{2\lceil m/2 \rceil}}\Big(1+\sqrt{\widetilde{M}_{{\color{revise} \Omega}}}\Big)
\widetilde{C}_{{\color{revise} \Omega}}^{\frac{1}{4\lceil m/2 \rceil}}T^{-\frac{1}{8\lceil m/2\rceil}},
\end{align}
where
\begin{align}
\widetilde{C}_{{\color{revise} \Omega}}&\!=\!\frac{1}{\gamma} \bigg[f(x^{0})\!-\!\inf_{u\in \R^d}f(u) \!+\! \mathcal{M}(x^0)\sigma+\frac{{\cal H}_{(2)}({\cal A}+{\cal B})}{2}\gamma^{2}+\frac{{\cal H}_{(m+2)} \mathcal{C}}{m+2}\gamma^{m+2}\bigg],\label{cdd}\\
\widetilde{M}_{{\color{revise} \Omega}}&\!=\!8\mu^{-1}[\gamma \widetilde{C}_{{\color{revise} \Omega}} + 0.5 \mu\sigma^2 d]+2\sup_{w\in \mathcal{S}}\|w\|^2,\label{mdd}
\end{align}
$\widetilde{w}_{k}^{2}$ is defined in Theorem~\ref{HBcomplexity01},
$\mathcal{M}(\cdot)$ is defined in Lemma~\ref{lem121001} and $ {\cal H}_{(.)}$ is defined in Lemma~\ref{lem112901}.
\end{corollary}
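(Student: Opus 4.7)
The plan is to mimic the proof of Corollary~\ref{cor4.7}, but, lacking convexity, replace its $\|x^k-z\|^2$ contraction argument with one exploiting the quadratic growth assumption~\eqref{quad_growth2}.

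First, I would substitute $\tau_k = \gamma/\sqrt{T+1}$ into Theorem~\ref{HBcomplexity01}. Since $\sum_{k=0}^{T}\tau_k = \gamma\sqrt{T+1}$, $\sum_{k=0}^{T}\tau_k^2 = \gamma^2$ and $\sum_{k=0}^{T}\tau_k^{m+2} = \gamma^{m+2}(T+1)^{-m/2} \le \gamma^{m+2}$ (here the assumption $m\ge1$ is used), the bound from Theorem~\ref{HBcomplexity01}, combined with the definition of $\widetilde{C}_\Omega$ in~\eqref{cdd}, collapses to $\widetilde{w}_{k_*}^2 \le \widetilde{C}_\Omega/\sqrt{T+1} \le \widetilde{C}_\Omega/\sqrt{T}$. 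Similarly, telescoping the one-step descent inequality~\eqref{hehehaha1} from $k=0$ to $k_*-1$ and using $f_\sigma(x^0) \le f(x^0) + \mathcal{M}(x^0)\sigma$ (Lemma~\ref{lem121001}) yields
\[
\mathbb{E}[f_\sigma(x^{k_*})] - \inf f \le \gamma\widetilde{C}_\Omega.
\]

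The main obstacle is to translate the quadratic growth~\eqref{quad_growth2} of $f$ into a comparable lower bound for $f_\sigma$. My plan is to exploit the $1$-Lipschitz continuity of $y\mapsto {\rm dist}(y,\mathcal{S})$, which gives ${\rm dist}(x+\sigma u,\mathcal{S}) \ge {\rm dist}(x,\mathcal{S}) - \sigma\|u\|$; combined with the elementary inequality $(a-b)^2 \ge \frac{a^2}{2} - b^2$ (which holds for all $a,b\in\R$ and handles transparently the case when the right-hand side is negative), this yields ${\rm dist}(x+\sigma u,\mathcal{S})^2 \ge \frac{1}{2}{\rm dist}(x,\mathcal{S})^2 - \sigma^2\|u\|^2$. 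Integrating against $u\sim\mathcal{N}(0,I)$ and invoking \eqref{quad_growth2} inside Definition~\ref{def_GSfunction} then gives
\[
f_\sigma(x) - \inf f \;\ge\; \frac{\mu}{4}\,{\rm dist}(x,\mathcal{S})^2 - \frac{\mu\sigma^2 d}{2} \quad \forall x\in\R^d.
\]
Evaluating this at $x=x^{k_*}$, taking expectation, and combining with the bound on $\mathbb{E}[f_\sigma(x^{k_*})]$ above yields $\mathbb{E}[{\rm dist}(x^{k_*},\mathcal{S})^2] \le \frac{4}{\mu}\bigl(\gamma\widetilde{C}_\Omega + \frac{\mu\sigma^2 d}{2}\bigr)$. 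Since $\|x\|^2 \le 2\,{\rm dist}(x,\mathcal{S})^2 + 2\sup_{w\in\mathcal{S}}\|w\|^2$ for every $x\in\R^d$, this implies $\mathbb{E}[\|x^{k_*}\|^2] \le \widetilde{M}_\Omega$, establishing the first assertion.

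For the second assertion, I would apply Lemma~\ref{lem082701} with $g(\cdot)=\|\nabla f_\sigma(\cdot)\|$, $n=m$, $\beta_c=\widetilde{M}_\Omega$ and $\alpha_c=\widetilde{C}_\Omega^{1/2}T^{-1/4}$. The former hypothesis is exactly the first assertion just established. The latter follows from Jensen's inequality:
\[
\mathbb{E}\!\left[\frac{\|\nabla f_\sigma(x^{k_*})\|}{1+\|x^{k_*}\|^m}\right] \le \sqrt{\widetilde{w}_{k_*}^2} \le \widetilde{C}_\Omega^{1/2}T^{-1/4}.
\]
The bound on $\mathbb{E}[\|\nabla f_\sigma(x^{k_*})\|^{1/(2\lceil m/2\rceil)}]$ produced by Lemma~\ref{lem082701} is precisely the claimed rate, and the conclusion follows because $\min_{0\le k\le T}$ is trivially bounded above by the value at $k_*$.
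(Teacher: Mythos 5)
Your proposal is correct and follows essentially the same route as the paper's proof: bound $\widetilde{w}_{k_*}^2$ via Theorem~\ref{HBcomplexity01}, telescope \eqref{hehehaha1} to bound $\mathbb{E}[f_\sigma(x^{k_*})]$, combine the quadratic growth condition with the $1$-Lipschitzness of ${\rm dist}(\cdot,\mathcal{S})$ to get $f_\sigma(x)-\inf f\ge \tfrac{\mu}{4}{\rm dist}(x,\mathcal{S})^2-\tfrac{\mu\sigma^2 d}{2}$, deduce $\mathbb{E}[\|x^{k_*}\|^2]\le\widetilde{M}_\Omega$, and finish with Lemma~\ref{lem082701}. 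The only cosmetic difference is that the paper derives the distance inequality via ${\rm dist}(x,\mathcal{S})^2\le 2\,{\rm dist}(x+\sigma u,\mathcal{S})^2+2\sigma^2\|u\|^2$ rather than your $(a-b)^2\ge \tfrac{a^2}{2}-b^2$ phrasing, which is equivalent.
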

}

{\color{revise}
\begin{remark}
  The condition \eqref{quad_growth2} is known as the second-order growth condition for $f$, and is a commonly used condition for deriving (global) asymptotic convergence rates of first-order methods; see, e.g., \cite{DimaLewis21}. It is known to hold if $f$ is strongly convex, and we refer the readers to \cite{DimaLewis21} and references therein for more concrete examples.
\end{remark}
}

{\color{revise}
\begin{proof}[Proof of Corollary~\ref{coro082801}]
We can deduce from Theorem \ref{HBcomplexity01} and $\tau_{k}\equiv \gamma/{\sqrt{T+1}}$ that
$\widetilde{w}_{k_*}^{2} \le {\widetilde{C}_{{\color{revise} \Omega}}}/{\sqrt{T}}$,
where $\widetilde{C}_{{\color{revise} \Omega}}$ is defined in \eqref{cdd}, which implies
\begin{align}
 \mathbb{E}\bigg[\bigg\|\frac{\nabla f_{\sigma}(x^{k_{*}})}{1+\|x^{k_{*}}\|^{m}}\bigg\|^{2}\bigg]\le \frac{\widetilde{C}_{{\color{revise} \Omega}}}{\sqrt{T}},\ \ \text{and hence}\ \ \mathbb{E}\bigg[\frac{\|\nabla f_{\sigma}(x^{k_{*}})\|}{1+\|x^{k_{*}}\|^{m}}\bigg]\le\widetilde{C}_{{\color{revise} \Omega}}^{\frac{1}{2}}T^{-\frac{1}{4}}.\label{eq082801pm}
\end{align}

Now, in view of \eqref{eq082801pm} and Lemma~\ref{lem082701}, the conclusion follows once we show
\begin{align}
 \mathbb{E}[\|x^{k_{*}}\|^{2}]\le \widetilde{M}_{{\color{revise} \Omega}},\label{eq082801am}
\end{align}
where $\widetilde{M}_{{\color{revise} \Omega}}$ is defined in \eqref{mdd}. Thus, in what follows, we will prove \eqref{eq082801am}.

When $k_* \ge 1$, we can sum both sides of \eqref{hehehaha1} from $k = 0$ to $k_* - 1$ to obtain
\begin{align}
\mathbb{E}[f_{\sigma}(x^{k_*})]&\le
\mathbb{E}[f_{\sigma}(x^{0})]
  +0.5{\cal H}_{(2)}({\cal A}\!+\!{\cal B})\sum_{k=0}^{k_{*}-1}\tau_{k}^{2}\!+\!\frac{{\cal H}_{(m+2)} \mathcal{C}}{m+2}\sum_{k=0}^{k_{*}-1}\tau_{k}^{m+2}\notag\\
  &\le f(x^{0})+\mathcal{M}(x^{0})\sigma+0.5{\cal H}_{(2)}({\cal A}+{\cal B})\gamma^{2}+\frac{{\cal H}_{(m+2)}\mathcal{C}}{m+2}\gamma^{m+2},\label{eq082801}
\end{align}
where the last inequality follows from Lemma~\ref{lem121001} and the fact that $\tau_k = \gamma/{\sqrt{T+1}}$.
Notice that \eqref{eq082801} also holds when $k_* = 0$.

On the other hand, according to \eqref{quad_growth2}, we can obtain
\begin{align*}
  &f_{\sigma}(x^{k_{*}})-\inf f=\mathbb{E}_{u\sim\mathcal{N}(0,I)}[f(x^{k_{*}}+\sigma u)-\inf f]\ge\frac{\mu}{2}\mathbb{E}_{u\sim\mathcal{N}(0,I)}[\mathrm{dist}(x^{k_{*}}+\sigma u, \mathcal{S})^{2}]\\
  &=\frac{\mu}{4}\mathbb{E}_{u\sim\mathcal{N}(0,I)}[2\mathrm{dist}(x^{k_{*}}+\sigma u, \mathcal{S})^{2}+2\|\sigma u\|^{2}-2\|\sigma u\|^{2}]\ge\frac{\mu}{4}{\rm dist}(x^{k_*},{\cal S})^2-\frac{1}{2}\mu d\sigma^{2},
\end{align*}
where we used \cite[Lemma 1]{NesterovGS} in the last inequality. This implies that
\begin{align}
 \mathbb{E}[f_{\sigma}(x^{k_{*}})]\ge\frac{\mu}{4}\mathbb{E}[{\rm dist}(x^{k_*},{\cal S})^2]-\frac{1}{2}\mu d\sigma^{2}+\inf f.\label{eq082802}
\end{align}
Combining \eqref{eq082801} and \eqref{eq082802}, one has
\begin{align*}
 \frac{\mu}{4}\mathbb{E}[{\rm dist}(x^{k_*},{\cal S})^2]\!\le\! f(x^{0})\!+\!\mathcal{M}(x^{0})\sigma\!+\!\frac{\mu d\sigma^{2}}2\!-\!\inf f\!+\!\frac{{\cal H}_{(2)}({\cal A}+{\cal B})}2\gamma^{2}\!+\!\frac{{\cal H}_{(m+2)}\mathcal{C}}{m+2}\gamma^{m+2}.
\end{align*}
Thus, we can deduce from the above display and the definition of $\widetilde {M}_{\Omega}$ in \eqref{mdd} that
\begin{align*}
 \mathbb{E}[\|x^{k_{*}}\|^{2}]\le 2\mathbb{E}[{\rm dist}(x^{k_*},{\cal S})^2] + 2\sup_{w\in {\cal S}}\|w\|^2\le \widetilde {M}_{\Omega}.
\end{align*}
\end{proof}
}

\section{Explicit complexity and ($\delta$, $\epsilon$)-stationarity}\label{sec44}

{\color{revise} Note that the results Theorem~\ref{HBcomplexity01} and Corollary~\ref{coro082801} for the unconstrained non-convex setting are both with respect to the GS $f_\sigma$ but not the objective function $f$. In this section, we study the iteration complexity of Algorithm~\ref{algorithm 01} for achieving a $(\delta, \epsilon)$-stationary point of an SPB function $f$.

We start by defining analogues of ${\cal H}_{(p)}$ in Lemma~\ref{lem112901} and ${\cal M}(\cdot)$ in Lemma~\ref{lem121001}:
\begin{align}
\breve{\cal H}_{(p)}&=\begin{cases}
  3^{p-1}\big(\mathrm{R}_{2}^{p}(2p+d)^{p}+2^{(m-1)p}\mathrm{R}_{1}^{p}[(m+2)p+d]^{\frac{(m+2)p}2}\big) & {\rm if}\ p \ge 1,\\
  1 & {\rm if}\ p = 0,
\end{cases}\label{breveH}\\
  \breve{\mathcal{M}}(x)&=(2^{m-1}{\rm R}_{1}\|x\|^{m}+{\rm R}_{2})\sqrt{d}+2^{m-1}{\rm R}_{1}(m+1+d)^{\frac{m+1}{2}}.\label{breveM}
\end{align}
Notice that if $\sigma$ is bounded by $1$, then it holds that
\begin{equation}\label{boundbound}
  {\cal H}_{(p)}\le \breve{\cal H}_{(p)}\ \ \forall p\in \mathbb{N}\cup \{0\}\ \  {\rm and}\ \ {\cal M}(x)\le \breve{\mathcal{M}}(x)\ \ \forall x\in\mathbb{R}^{d}.
\end{equation}
In the next auxiliary lemma, we derive bounds that \emph{explicitly} depend on $\sigma$ for the $\widetilde C_{{\color{revise} \Omega}}$ in \eqref{cdd} that appeared in Corollary~\ref{coro082801}.
\begin{lemma}\label{bound1130}
Consider problem~\eqref{problem}, where $f\in \spb$ with parameters ${\rm R}_1$, ${\rm R}_2$ and $m$ as in \eqref{definitioneq01}.
Suppose that $\inf_{u\in \R^d} f(u)>-\infty$ and let $\sigma\in(0,1]$ and $\gamma=\sigma$. Then $\widetilde{C}_{\Omega}\le K\sigma^{-1}$, where
\begin{align*}
K&=f(x^{0})-\inf f+\breve{\mathcal{M}}(x^{0})+0.5\breve{\mathcal{H}}_{(2)}(2^{2m-2}\mathrm{R}_{1}(m+1+d)^{\frac{m+1}{2}}+\mathrm{R}_{2}\sqrt{d})\\
&~~~+2^{2m-3}\breve{\mathcal{H}}_{(2)}\mathrm{R}_{1}\sqrt{d}+2^{m-1}\mathrm{R}_{1}\breve{\mathcal{H}}_{(m+2)}(m+2)^{-1}\sqrt{d},
\end{align*}
$\breve{\mathcal{M}}(\cdot)$ is defined as \eqref{breveM}, $\breve{\mathcal{H}}_{(\cdot)}$ is defined as \eqref{breveH} and $\widetilde{C}_{\Omega}$ is given in \eqref{cdd}.
\end{lemma}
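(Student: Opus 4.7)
The plan is to proceed by a direct computation that starts from the definition \eqref{cdd}, sets $\gamma = \sigma$, multiplies through by $\sigma$, and then estimates each resulting term using the bound $\sigma \le 1$ together with the relation \eqref{boundbound}, i.e., ${\cal H}_{(p)} \le \breve{\cal H}_{(p)}$ and ${\cal M}(x) \le \breve{\cal M}(x)$. With $\gamma = \sigma$, we have
\[
\widetilde{C}_\Omega\, \sigma = f(x^0) - \inf f + \mathcal{M}(x^0)\sigma + \frac{{\cal H}_{(2)}({\cal A}+{\cal B})}{2}\sigma^2 + \frac{{\cal H}_{(m+2)}{\cal C}}{m+2}\sigma^{m+2},
\]
so the task is to show that the right-hand side is bounded above by the stated constant $K$.

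I would handle the four terms separately. The first term $f(x^0) - \inf f$ already appears in $K$. For the second, $\mathcal{M}(x^0)\sigma \le \breve{\cal M}(x^0)\sigma \le \breve{\cal M}(x^0)$ using \eqref{boundbound} together with $\sigma \le 1$. For the third, I would plug in the explicit expressions of ${\cal A}$ and ${\cal B}$ from \eqref{eq102801}, which gives
\[
({\cal A}+{\cal B})\sigma^{2} = 2^{2m-2}{\rm R}_{1}(m+1+d)^{\frac{m+1}{2}}\sigma^{m+1} + {\rm R}_{2}\sqrt{d}\,\sigma + 2^{2m-2}{\rm R}_{1}\sqrt{d}\,\sigma,
\]
and since $\sigma \le 1$ the powers $\sigma^{m+1}, \sigma \le 1$, which, after pairing with the bound ${\cal H}_{(2)} \le \breve{\cal H}_{(2)}$, matches exactly the two $\breve{\cal H}_{(2)}$-terms in $K$. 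For the fourth term, substituting ${\cal C} = 2^{m-1}\sigma^{-1}{\rm R}_1\sqrt{d}$ yields ${\cal C}\sigma^{m+2} = 2^{m-1}{\rm R}_1\sqrt{d}\,\sigma^{m+1} \le 2^{m-1}{\rm R}_1\sqrt{d}$, and together with ${\cal H}_{(m+2)} \le \breve{\cal H}_{(m+2)}$ this produces the last term of $K$.

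Putting the four estimates together gives $\widetilde{C}_\Omega \,\sigma \le K$, i.e., $\widetilde{C}_\Omega \le K\sigma^{-1}$, as desired. There is no genuine obstacle here; the argument is purely a matter of bookkeeping. The only subtle point is noticing that multiplying by $\sigma$ at the outset cancels the $\sigma^{-1}$ factors implicit in ${\cal A}$, ${\cal B}$ and ${\cal C}$, so that every remaining power of $\sigma$ is nonnegative and can be bounded by $1$ via $\sigma \le 1$; the upgrade from ${\cal H}_{(p)}$ to $\breve{\cal H}_{(p)}$ and from ${\cal M}$ to $\breve{\cal M}$ is then simply an application of \eqref{boundbound}.
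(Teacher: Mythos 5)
Your proposal is correct and follows essentially the same route as the paper's proof: multiply the bracket in \eqref{cdd} by $\gamma=\sigma$, substitute the explicit forms of $\mathcal{A}$, $\mathcal{B}$, $\mathcal{C}$ so that the $\sigma^{-1}$ factors cancel, bound all remaining nonnegative powers of $\sigma$ by $1$, and invoke \eqref{boundbound}. The term-by-term bookkeeping (including $0.5\cdot 2^{2m-2}=2^{2m-3}$) matches the stated $K$ exactly.
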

\begin{proof}
First, from the definitions of $\mathcal{A}$ and $\mathcal{B}$, we have that
 \begin{align*}
  (\mathcal{A}+\mathcal{B})\gamma^{2}&=\Big[2^{2m-2}\mathrm{R}_{1}\sigma^{m-1}(m+1+d)^{\frac{m+1}{2}}+\sigma^{-1}\mathrm{R}_{2}\sqrt{d}+2^{2m-2}\sigma^{-1}\mathrm{R}_{1}\sqrt{d}\Big]\sigma^{2}\!\!\!\!\!\!\!\!\\
  &\le2^{2m-2}\mathrm{R}_{1}(m+1+d)^{\frac{m+1}{2}}+\mathrm{R}_{2}\sqrt{d}+2^{2m-2}\mathrm{R}_{1}\sqrt{d}.
 \end{align*}
 Next, one can deduce upon invoking the definition of $\mathcal{C}$ that
 \begin{align*}
  \mathcal{C}\gamma^{m+2}=2^{m-1}\sigma^{-1}\mathrm{R}_{1}\sqrt{d}\sigma^{m+2}\le2^{m-1}\mathrm{R}_{1}\sqrt{d},
 \end{align*}
which yields the desired conclusion upon invoking the definition of $\widetilde{C}_{\Omega}$ and \eqref{boundbound}.
\end{proof}
\begin{theorem}[Complexity bound for approximate Goldstein stationarity]\label{comforun}
Consider problem~\eqref{problem}, where $f\in \spb$ with parameters ${\rm R}_1$, ${\rm R}_2$ and $m$ as in \eqref{definitioneq01}. Let $\inf_{u\in \R^d} f(u)>-\infty$, $\delta\in(0,1)$, ${\cal P}$ be defined in \eqref{PPP}, and define
\begin{align}
  \breve{\mathcal{N}}(m)&=
\begin{cases}
 \max\{\min\{5\mathrm{R}_{2},1\}^{-\frac1{\min\{m,d\}}}, \kappa_{1}\}^{4\min\{m,d\} + 2}+1& {\rm if}\ m \ge 1, \\
 \max\{\min\{5\mathrm{R}_{2},1\}^{-\frac1d}, \kappa_{2}\}^{4d+2}+1 & {\rm if} \ m \!=\!0,
 \end{cases}\!\!\!\!\!
\end{align}
where
\begin{align}
\kappa_{1}&=\min\Big\{[2^{m+1}{\rm R_1}(m+d)^{\frac{m}{2}}]^{-\frac{1}{m}},\frac{\delta\mathcal{P}^{-1/d}}{\sqrt{d\pi e}}\Big\}, \ \ \kappa_{2}=\frac{\delta\mathcal{P}^{-1/d}}{\sqrt{d\pi e}}.\label{kappade}
\end{align}
Let $T\ge \breve{\mathcal{N}}(m)$ be a positive integer and $\gamma = \sigma=\breve{\sigma}(m)$, where
 \[
  \breve{\sigma}(m):=
\begin{cases}
 \kappa_{1}T^{-\frac{1}{4\min\{m,d\}+2}}& {\rm if}\ m \ge 1, \\
 \kappa_{2}T^{-\frac{1}{4d+2}} & {\rm if} \ m =0.
 \end{cases}
 \]
 Let {\color{rerevise}$\tau_{k}={\gamma}/{\sqrt{T+1}}$} for $k=0,\cdots,T$.
 Then the following statements hold.
 \begin{enumerate}[label={\rm (\roman*)}]
 \item\label{r120301} For $m\ge1$, under the conditions of Corollary \ref{coro082801}, the sequence $\{x^k\}$ generated by Algorithm~\ref{algorithm 01} satisfies that
\begin{align}
  \min_{0\leq k\leq T}\mathbb{E}\big[\mathrm{dist}(0,\partial_{G}^{\delta}f(x^{k}))^{\frac{1}{2\lceil m/2\rceil}}\big]\!\le\! \frac{\Big(1+\sqrt{\widetilde{K}_{\Omega}}\Big)(2K^{\frac{1}{2}}\kappa_{1}^{-\frac{1}{2}}+2)^{\frac{1}{2\lceil m/2\rceil}}}
  {T^{(\frac{1}{4}-\frac{1}{8\min\{m,d\}+4})\frac{1}{2\lceil m/2\rceil}}},
\end{align}
where $K$ is given in Lemma~\ref{bound1130} and $\widetilde{K}_{\Omega}=8\mu^{-1}K+4d+2\sup_{w\in \mathcal{S}}\|w\|^2$.
 \item\label{r120302} For $m=0$, the sequence $\{x^k\}$ generated by Algorithm~\ref{algorithm 01} satisfies that
 \begin{align*}
 \min_{0\leq k\leq T}\mathbb{E}\left[\mathrm{dist}(0,\partial_{G}^{\delta}f(x^{k}))\right]\le (2K^{\frac{1}{2}}\kappa_{2}^{-\frac{1}{2}}+2)T^{-(\frac{1}{4}-\frac{1}{8d+4})},
\end{align*}
where $K$ is given in Lemma~\ref{bound1130}.
 \end{enumerate}
\end{theorem}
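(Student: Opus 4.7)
The plan is to combine the approximate Goldstein-subgradient inclusion from Theorem~\ref{THBD01}\ref{THBD01:ii} with the GS-gradient complexity bounds of Corollary~\ref{coro082801} (for $m\ge1$) and Theorem~\ref{HBcomplexity01} (for $m=0$), the explicit estimate $\widetilde{C}_{\Omega}\le K\sigma^{-1}$ from Lemma~\ref{bound1130}, and the moment-conversion Lemma~\ref{lem082701}. The starting point is that whenever $\sigma$ satisfies \eqref{091001}, Theorem~\ref{THBD01}\ref{THBD01:ii} together with the triangle inequality yields
\[
\mathrm{dist}(0,\partial_{G}^{\delta}f(x^{k}))\le\|\nabla f_{\sigma}(x^{k})\|+(1+\|x^{k}\|^{m})\varepsilon,
\]
so that dividing by $1+\|x^{k}\|^{m}$ and taking expectation reduces the desired Goldstein stationarity bound to the relative GS-gradient quantity $\widetilde{w}_{k}$ studied in the previous section, plus an additive $\varepsilon$.

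Next, I will choose $\gamma=\sigma=\breve{\sigma}(m)$ and $\varepsilon=T^{-\min\{m,d\}/(4\min\{m,d\}+2)}$. This pair saturates \eqref{091001} (which specialises to $\sigma\le\kappa_{1}\varepsilon^{1/\min\{m,d\}}$ when $m\ge1$ and to $\sigma\le\kappa_{2}\varepsilon^{1/d}$ when $m=0$, using $\max\{1/m,1/d\}=1/\min\{m,d\}$) and balances the $\varepsilon$ and gradient terms in $T$. The hypothesis $T\ge\breve{\mathcal{N}}(m)$ is engineered so that simultaneously $\sigma\le1$ and $\varepsilon<\min\{5\mathrm{R}_{2},1\}$, which are precisely the premises \eqref{varepsilonchoice} of Remark~\ref{sdde}. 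For $m\ge1$, Lemma~\ref{bound1130} then gives $\widetilde{C}_{\Omega}\le K\sigma^{-1}$, and Corollary~\ref{coro082801} yields
\[
\mathbb{E}\bigg[\frac{\|\nabla f_{\sigma}(x^{k_{*}})\|}{1+\|x^{k_{*}}\|^{m}}\bigg]\le\widetilde{C}_{\Omega}^{1/2}T^{-1/4}\le K^{1/2}\kappa_{1}^{-1/2}T^{-(1/4-1/(8\min\{m,d\}+4))}.
\]
The algebraic identity $\min\{m,d\}/(4\min\{m,d\}+2)=1/4-1/(8\min\{m,d\}+4)$ then ensures that the $\varepsilon$ term and the gradient term contribute at the same order in $T$, producing a clean aggregated bound on $\mathbb{E}[\mathrm{dist}(0,\partial_{G}^{\delta}f(x^{k_{*}}))/(1+\|x^{k_{*}}\|^{m})]$.

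For part~\ref{r120301} ($m\ge1$), I finish by invoking Lemma~\ref{lem082701} with $g=\mathrm{dist}(0,\partial_{G}^{\delta}f(\cdot))$ and $n=m$ to convert this relative bound into the desired bound on $\mathbb{E}[g^{1/(2\lceil m/2\rceil)}]$; the required uniform estimate $\mathbb{E}[\|x^{k_{*}}\|^{2}]\le\widetilde{K}_{\Omega}$ follows from Corollary~\ref{coro082801}'s bound $\widetilde{M}_{\Omega}$ by using $\gamma=\sigma\le1$ (hence $\sigma^{2}\le1$) and $\gamma\widetilde{C}_{\Omega}\le K$. For part~\ref{r120302} ($m=0$), the factor $1+\|x^{k}\|^{m}=2$ is constant, so Lemma~\ref{lem082701} is unnecessary: Theorem~\ref{HBcomplexity01} combined with Lemma~\ref{bound1130} and Jensen's inequality directly gives $\mathbb{E}[\|\nabla f_{\sigma}(x^{k_{*}})\|]\le 2K^{1/2}\kappa_{2}^{-1/2}T^{-(1/4-1/(8d+4))}$, and adding the $2\varepsilon$ contribution yields the claim. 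The main obstacle is the careful bookkeeping of constants, in particular verifying that $\breve{\mathcal{N}}(m)$ is precisely the threshold simultaneously enforcing the premises of Remark~\ref{sdde} and that all intermediate constants consolidate into the closed forms $K$, $\widetilde{K}_{\Omega}$, $\kappa_{1}$ and $\kappa_{2}$ as stated.
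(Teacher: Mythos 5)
Your proposal follows essentially the same route as the paper's proof: invoke Remark~\ref{sdde} to place $\nabla f_{\sigma}(x^{k_*})$ in an $(1+\|x^{k_*}\|^m)\varepsilon$-neighborhood of $\partial_G^\delta f(x^{k_*})$, combine the resulting relative bound with $\widetilde{C}_{\Omega}\le K\sigma^{-1}$ and the choice $\sigma=\breve\sigma(m)$, $\varepsilon$ balanced so both terms are $O(T^{-(1/4-1/(8\min\{m,d\}+4))})$, and finish via Lemma~\ref{lem082701} with $\mathbb{E}[\|x^{k_*}\|^2]\le\widetilde K_\Omega$ (for $m\ge1$) or directly (for $m=0$). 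The only blemish is that your single formula $\varepsilon=T^{-\min\{m,d\}/(4\min\{m,d\}+2)}$ degenerates to $1$ when $m=0$; your later discussion makes clear you intend $\varepsilon=T^{-d/(4d+2)}$ there, exactly as in the paper.
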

\begin{proof}
Since $T\ge \breve{\cal N}(m)$, we see that $\breve{\sigma}(m)\le 1$, which means $\gamma \in (0,1]$. We can now deduce from Theorem \ref{HBcomplexity01} and $\tau_{k}\equiv \gamma/{\sqrt{T+1}}$ that
$\widetilde{w}_{k_*}^{2} \le {\widetilde{C}_{{\color{revise} \Omega}}}/{\sqrt{T}}$,
where $\widetilde{C}_{{\color{revise} \Omega}}$ is defined in \eqref{cdd}, which implies
\begin{align}
 \mathbb{E}\bigg[\bigg\|\frac{\nabla f_{\sigma}(x^{k_{*}})}{1+\|x^{k_{*}}\|^{m}}\bigg\|^{2}\bigg]\le \frac{\widetilde{C}_{{\color{revise} \Omega}}}{\sqrt{T}},\ \ \text{and hence}\ \ \mathbb{E}\bigg[\frac{\|\nabla f_{\sigma}(x^{k_{*}})\|}{1+\|x^{k_{*}}\|^{m}}\bigg]\le\widetilde{C}_{{\color{revise} \Omega}}^{\frac{1}{2}}T^{-\frac{1}{4}}.\label{eq0828001pm}
\end{align}
On the other hand, if we define
\begin{align*}
 \breve\epsilon(m)=
\begin{cases}
 T^{-\frac{\min\{m,d\}}{4\min\{m,d\}+2}}& {\rm if}\ m \ge 1,\\
 T^{-\frac{d}{4d+2}} & \text{if } m =0,
 \end{cases}
\end{align*}
then for the $T\ge \breve{\mathcal{N}}(m)$, $\delta\in(0,1)$, $\epsilon=\breve{\epsilon}(m)$ and $\sigma=\breve{\sigma}(m)$, one has $0<\epsilon < \min\left\{5{\rm R_2},1\right\}$ and $\sigma$ satisfies \eqref{091001}. Consequently, by Remark~\ref{sdde}, one has
\[
\nabla f_{\sigma}(x^{k_{*}})\in\partial_{G}^{\delta}f(x^{k_{*}})+(1+\|x^{k_{*}}\|^{m})\epsilon\cdot\mathbb{B}.
\]
This implies that
\begin{align*}
 \mathrm{dist}(0,\partial_{G}^{\delta}f(x^{k_{*}}))\le\|\nabla f_{\sigma}(x^{k_{*}})\|+(1+\|x^{k_{*}}\|^{m})\epsilon.
\end{align*}
Rearranging the above display and then taking expectation, one can obtain
\begin{align*}
  \mathbb{E}\left[\frac{\mathrm{dist}(0,\partial_{G}^{\delta}f(x^{k_{*}}))}{1+\|x^{k_{*}}\|^{m}}\right] \le\mathbb{E}\left[\frac{\|\nabla f_{\sigma}(x^{k_{*}})\|}{1+\|x^{k_{*}}\|^{m}}\right]+\epsilon.
\end{align*}
This together with \eqref{eq0828001pm} implies that
\begin{align}
 \mathbb{E}\left[\frac{\mathrm{dist}(0,\partial_{G}^{\delta}f(x^{k_{*}}))}{1+\|x^{k_{*}}\|^{m}}\right] \le\widetilde{C}_{\Omega}^{\frac{1}{2}}T^{-\frac{1}{4}}+\epsilon.\label{eq0828002pm}
\end{align}
We now prove~\ref{r120301}.  Notice from Corollary~\ref{coro082801} that
\begin{align}
 \mathbb{E}[\|x^{k_{*}}\|^{2}]\le \widetilde{M}_{\Omega},\label{eq0828003pm}
\end{align}
where $\widetilde{M}_{\Omega}$ is defined as in \eqref{mdd}. Combining \eqref{eq0828003pm}, \eqref{eq0828002pm} and Lemma \ref{lem082701}, one has
\begin{align}
 \mathbb{E}\big[\mathrm{dist}(0,\partial_{G}^{\delta}f(x^{k_{*}}))^{\frac{1}{2\lceil m/2\rceil}}\big]\le\Big(1+\sqrt{\widetilde{M}_{\Omega}}\Big)\big(2\widetilde{C}_{\Omega}^{\frac{1}{2}}T^{-\frac{1}{4}}+2\epsilon\big)^{\frac{1}{2\lceil m/2\rceil}}.\label{eq113001}
\end{align}
Next, we utilize Lemma~\ref{bound1130} and the above display to obtain the desired result for $m\ge1$. To this end, notice that $\sigma=\breve{\sigma}(m)\in(0,1]$ and $\gamma=\sigma$. Combining Lemma~\ref{bound1130} and the definitions of $\widetilde{M}_{\Omega}$ and $\widetilde{K}_{\Omega}$, one has
\begin{align*}
  \widetilde{M}_{\Omega}\le8\mu^{-1}[\gamma \widetilde{C}_{{\color{revise} \Omega}} + 0.5 \mu d]+2\sup_{w\in \mathcal{S}}\|w\|^2\le8\mu^{-1}(K + 0.5 \mu d)+2\sup_{w\in \mathcal{S}}\|w\|^2=\widetilde{K}_{\Omega}.
\end{align*}
Thus, for $m\ge1$, Lemma~\ref{bound1130} and \eqref{eq113001} yield that
\begin{align*}
 &\mathbb{E}\big[\mathrm{dist}(0,\partial_{G}^{\delta}f(x^{k_{*}}))^{\frac{1}{2\lceil m/2\rceil}}\big]\le\Big(1+\sqrt{\widetilde{K}_{\Omega}}\Big)\big(2K^{\frac{1}{2}}\sigma^{-\frac{1}{2}}T^{-\frac{1}{4}}+2\epsilon\big)^{\frac{1}{2\lceil m/2\rceil}}\\
  &=\Big(1+\sqrt{\widetilde{K}_{\Omega}}\Big)\big[2K^{\frac{1}{2}}(\kappa_{1}T^{-\frac{1}{4\min\{m,d\}+2}})^{-\frac{1}{2}}T^{-\frac{1}{4}}\!+\!2T^{-\frac{\min\{m,d\}}{4\min\{m,d\}+2}}\big]^{\frac{1}{2\lceil m/2\rceil}}\\
  &=\Big(1+\sqrt{\widetilde{K}_{\Omega}}\Big)(2K^{\frac{1}{2}}\kappa_{1}^{-\frac{1}{2}}+2)^{\frac{1}{2\lceil m/2\rceil}}T^{-(\frac{1}{4}-\frac{1}{8\min\{m,d\}+4})\frac{1}{2\lceil m/2\rceil}}.
\end{align*}
Finally, to prove \ref{r120302}, we deduce from Lemma~\ref{bound1130} and \eqref{eq0828002pm} that
\begin{align*}
&\mathbb{E}\left[\mathrm{dist}(0,\partial_{G}^{\delta}f(x^{k_{*}}))\right]\le2K^{\frac{1}{2}}\sigma^{-\frac{1}{2}}T^{-\frac{1}{4}}+2\epsilon\\
  &=2K^{\frac{1}{2}}(\kappa_{2}T^{-\frac{1}{4d+2}})^{-\frac{1}{2}}T^{-\frac{1}{4}}\!+\!2T^{-\frac{d}{4d+2}}=(2K^{\frac{1}{2}}\kappa_{2}^{-\frac{1}{2}}+2)T^{-(\frac{1}{4}-\frac{1}{8d+4})}.
\end{align*}
\end{proof}
}
{\color{revise}\begin{remark}[Explicit bound on $\breve{\mathcal{N}}(m)$]
Note that the result in Theorem~\ref{comforun} requires that $T\ge \breve{\mathcal{N}}(m)$. Here, we derive simpler bounds for $\breve{\mathcal{N}}(m)$ that are \emph{independent} of $d$ when $d$ is large, under the assumptions of Theorem~\ref{comforun}. We first consider the case $m \ge 1$. Assume in addition that $d\ge m$. Then,
\begin{align*}
\kappa_{1}\overset{\mathrm{(a)}}\le(2^{m+1}\mathrm{R}_{1})^{-\frac{1}{m}}(m+d)^{-\frac{1}{2}}\le\mathrm{R}_{1}^{-\frac{1}{m}},
\end{align*}
where we used the definition of $\kappa_{1}$ (see \eqref{kappade}) in (a). From the above display and the definition of $\breve{\mathcal{N}}(m)$, we obtain
\begin{align*}
 \breve{\mathcal{N}}(m)\le \max\{\min\{5\mathrm{R}_{2},1\}^{-1}, \mathrm{R}_{1}^{-1}\}^{\frac{4m+2}{m}}+1.
\end{align*}
Next, in the case $m=0$, let $d\ge\max\{2, 2^{-2}\mathrm{R}_{2}^{-1}\}$. Since $d\ge 2$, we have
\begin{align}\label{eq120102}
\min\{5\mathrm{R}_{2}, 1\}^{-\frac{4d+2}{d}}\le\min\{5\mathrm{R}_{2}, 1\}^{-5}.
\end{align}
 On the other hand, we know
\begin{align}\label{eq120103}
 \kappa_{2}^{4d+2}\overset{\mathrm{(a)}}=\delta^{4d+2}\frac{(4\mathrm{R}_{2})^{-\frac{4d+2}{d}}}{(\sqrt{d\pi e})^{4d+2}}\overset{\mathrm{(b)}}\le \frac{1}{(4\mathrm{R}_{2})^{\frac{4d+2}{d}}d^{2d+1}}\!\overset{\mathrm{(c)}}\le\!\frac{1}{(4\mathrm{R}_{2}d)^{\frac{4d+2}{d}}}
 \overset{\mathrm{(d)}}\le1,
\end{align}
where (a) follows from the definition of $\kappa_{2}$ in \eqref{kappade}, (b) holds because $\sqrt{\pi e} > 1$ and $\delta \in (0,1)$, (c) follows from $d\ge2$ and (d) follows from the fact $d\ge2^{-2}\mathrm{R}_{2}^{-1}$. Now, combining \eqref{eq120102}, \eqref{eq120103} and the definition of $\breve{\mathcal{N}}(m)$, one has
\begin{align*}
 \breve{\mathcal{N}}(m)\le \min\{5\mathrm{R}_{2},1\}^{-5}+1.
\end{align*}
\end{remark}}
{\bf Acknowledgements}. The authors would like to thank Wenqing Ouyang for
discussions on the tail bound and tempered distribution used in section~\ref{sec3orig}. {\color{revise} We would also like to thank the referees for their comments and suggestions, which significantly improved the manuscript}.

\end{document}